\documentclass[pdflatex,sn-mathphys-num]{sn-jnl}

\usepackage{graphicx}%
\usepackage{multirow}%
\usepackage{amsmath,amssymb,amsfonts}%
\usepackage{amsthm}%
\usepackage{mathrsfs}%
\usepackage{subcaption}
\usepackage[title]{appendix}%
\usepackage{xcolor}%
\usepackage{textcomp}%
\usepackage{manyfoot}%
\usepackage{booktabs}%
\usepackage{algorithm}%
\usepackage{algorithmicx}%
\usepackage{algpseudocode}%
\newtheorem{lemma}{Lemma}
\usepackage{bm}%
\usepackage{listings}%
\theoremstyle{thmstyleone}%
\newtheorem{theorem}{Theorem}
\theoremstyle{thmstyletwo}%
\newtheorem{remark}{Remark}%

\theoremstyle{thmstylethree}%
\numberwithin{equation}{section}
\numberwithin{equation}{section}
\begin{document}

\title[Article Title]{A Priori Error Analysis of a High-Order Selective Discontinuous Galerkin Method for Elliptic Interface Problems}


\author[1]{\fnm{Fang} \sur{Liu}}\email{fliu@okstate.edu}

\author[2]{\fnm{Haroun} \sur{Meghaichi}}\email{meghaichi.1@osu.edu}

\author*[1]{\fnm{Xu} \sur{Zhang}}\email{xzhang@okstate.edu}

\affil*[1]{\orgdiv{Department of Mathematics}, \orgname{Oklahoma State University}, \orgaddress{\city{Stillwater}, \state{OK} \postcode{74078}, \country{USA}}}

\affil[2]{\orgdiv{Department of Mathematics},  \orgname{Ohio State University}, \orgaddress{\city{Columbus}, \state{OH} \postcode{43210}, \country{USA}}}


\abstract{This paper develops a high-order selective discontinuous Galerkin (SDG) method for solving elliptic interface problems on interface-unfitted Cartesian meshes. This method applies the discontinuous Galerkin (DG) formulation on interface elements and the continuous Galerkin (CG) formulation elsewhere. Correspondingly, we construct a new, locally conforming, hybrid immersed finite element (HIFE) space based on the high-order Frenet IFE basis functions of \cite{2024AdjeridLinMeghaichi}. Compared with the DG method, the computational cost of this SDG method is significantly reduced and remains comparable to that of the CG method. We prove that the new HIFE space achieves optimal approximation under $h$-refinement, and we establish the well-posedness of the SDG scheme. {\it A priori} error estimates are derived in the energy and $L^2$ norms. Numerical examples are provided to verify the theoretical analysis.}

\keywords{Selective discontinuous Galerkin, Frenet immersed finite element, Hybrid immersed finite element space, Error analysis, High-order method.}

\pacs[MSC Classification]{35R05, 65N30, 97N50}

\maketitle

\section{Introduction}
Let $\Omega\subset\mathbb{R}^2$ be a rectangular domain with boundary $\partial \Omega$. Suppose that $\Omega$ is split by an interface curve $\Gamma$ into two subdomains $\Omega^-$ and $\Omega^+$ such that $\Gamma=\overline{\Omega^-}\cap\overline{\Omega^+}$.
Consider the elliptic boundary value problem
 \begin{align}
-\nabla \cdot(\beta\nabla u)=f,&\quad\text{ in } \Omega^-\cup \Omega^+,\label{1.1}\\
u=g, &\quad\text{ on } \partial\Omega.\label{1.2}
\end{align}
Across the interface $\Gamma$, the solution is assumed to satisfy the jump conditions: 
\begin{align}
\left[ u \right]_{\Gamma}=0,~~~
\left[ \beta\frac{\partial u}{\partial \mathbf{n}} \right]_{\Gamma}=0,\label{1.3}
\end{align} 
where the jump $[v]_{\Gamma}:=v^+|_{\Gamma}-v^-|_{\Gamma}$ with $v^{\pm}=v|_{\Omega^{\pm}}$. $\textbf{n}$ denotes the normal vector of the interface $\Gamma$ pointing from $\Omega^-$ to $\Omega^+$, and the diffusion coefficient $\beta$ is a piecewise constant
 \begin{align*}
 \beta=
 \left\{\begin{aligned}
&\beta^-,&\text{ in } \Omega^-,\\
&\beta^+,&\text{ in } \Omega^+.
\end{aligned}\right.
\end{align*}
For high-order approximations (i.e., $m\geq 2$), we further assume $f\in C^{m-2}(\Omega)$ and impose the Laplacian extended jump conditions
\begin{equation}
\left[ \beta\frac{\partial^j{\Delta} u}{\partial\mathbf{n}^j} \right]_\Gamma=0,\quad j=0,1,\cdots,m-2.\label{1.4}
\end{equation}

The finite element method is one of the most widely used frameworks for numerical approximation of partial differential equations. Based on the treatment of global continuity, finite element formulations are broadly categorized as continuous Galerkin (CG) \cite{1973Strang,1978Ciarlet,1994Brenner} and discontinuous Galerkin (DG) \cite{1990CockburnHouShu, 2001ArnoldBrezziBernardoMarini, 2008Riviere,2019LinSheenZhang} methods. Compared with the CG method, the DG method has more global degrees of freedom (DoFs) because of discontinuities across elements. 
On the other hand, the CG method enforces global continuity of basis functions, which introduces additional challenges in both the construction and the error analysis for interface problems.
In 2014, a selective DG (SDG) method was proposed in \cite{2014HeLinLin}, which uses the interior penalty discontinuous Galerkin (IPDG) formulation where local refinement is needed and the standard Galerkin formulation everywhere else. Importantly, the global DoF count of the SDG method is significantly smaller than that of the DG method as the mesh size decreases, while remaining comparable to that of the CG method.

The discontinuity of the diffusion coefficient across the interface reduces the regularity of the solution \cite{1970Ivo, 1998ChenJun}, posing significant challenges for numerical analysis. Standard finite element methods typically require body-fitted meshes, which must be regenerated whenever the interface evolves. To overcome this limitation, immersed finite element (IFE) methods have been developed, which allow the interface to cut through the mesh while incorporating the jump conditions into the basis functions.  In 1998, Li \cite{1998Li} introduced a linear IFE space where the basis functions satisfy the interface jump conditions for the 1D elliptic equation. In \cite{2001LinLinRogersRyan}, the authors proposed a bilinear IFE space whose basis functions approximately satisfy the jump conditions; the approximation capability of this space was later proved in \cite{2008HeLinLin}. A linear IFE space for the 2D model on triangular meshes was proposed in \cite{2004LiLinLinRogers}, where the authors proved its approximation capability using a multi-point Taylor expansion technique. Lin et al. \cite{2015LinLinZhang} proposed a partially penalized immersed finite element (PPIFE) method for elliptic interface problems that overcomes a limitation of the classic IFE method by ensuring optimal convergence rates in the $H^1$- and $L^2$-norm under mesh refinement. 
 
 High-order IFE approximations have also been developed for  elliptic interface problems. In \cite{2006CampLinLinSun}, the authors introduced three types of 1D quadratic IFE spaces and their 2D tensor-product extensions when the interface is linear. Adjerid and Lin \cite{2009SlimaneLin} presented a high-order IFE space for the 1D model that satisfies the extended jump conditions. Then, in \cite{2014AdjeridMohamedLin}, the authors constructed general high-order IFE spaces when the interface is a straight line for the 2D case. In addition, Adjerid et al. \cite{2017AdjeridGuoLin} constructed an IFE space by using a least-squares formulation that enforces all interface jump conditions weakly along the actual interface. Later, Guo and Lin \cite{2019GuoLin} proposed a high-order IFE space based on a Cauchy extension; this approach enforces the  interface jump conditions weakly and guarantees the existence and uniqueness of the IFE shape functions. Adjerid et al. \cite{2024AdjeridLinMeghaichi,2025AdjeridLinMeghaichi, 2025AdjeridLinMeghaichi2} proposed the high-order geometry-conforming (GC) IFE spaces based on the Frenet-Serret transformation on rectangular meshes. These new IFE functions satisfy the interface conditions exactly and preserve optimal approximation capabilities. Recently, this GC-IFE construction has been extended to triangular meshes in \cite{2025LinLinXu}.

 The analysis of IFE methods is particularly challenging, because of the low regularity of solutions to interface problems and the constrained regularity inherited by the local approximation spaces.
In 2014, He et al. \cite{2012HeLinLin} analyzed the error of both the bilinear and linear IFE solutions for 2D elliptic interface problems. An {\it a priori} error analysis for the PPIFE method was developed in \cite{2015LinLinZhang}, and an improved error analysis with weaker global regularity requirements was provided in \cite{2019GuoLinZhuang}. Moreover, in 2015,  Lin et al. \cite{2015LinYangZhang} derived {\it a priori} error estimates for a class of interior penalty DG methods using IFE functions. 
An {\it a priori} analysis for a high-degree IFE method based on a Cauchy extension was given in \cite{2019GuoLin}. In 2019, Lin et al. \cite{2019LinSheenZhang} derived the error estimates of a nonconforming IFE method.  Error estimates for an enriched IFE method for interface problems
with nonhomogeneous jump conditions were presented in \cite{2023AdjeridIvoGuoLin, 2025GuoZhang}. Recently, Meghaichi et al. \cite{2025AdjeridLinMeghaichi} presented an error analysis of the Frenet IFE method for elliptic interface problems.
 
In this work, we develop a high-order SDG method for elliptic interface problems in two dimensions and derive {\it a priori} error estimates for this method. The scheme is built upon hybrid IFE (HIFE) spaces constructed as follows: standard Lagrange nodal basis functions are used on non-interface elements, while Frenet IFE bases are adopted on interface elements. This hybrid IFE structure has the following advantages:
\begin{itemize}
\item Compared with standard DG methods, the proposed scheme yields significant savings in computational costs. Since the CG formulation is used on non-interface elements, which constitute the majority of all elements, the cost reduction becomes increasingly pronounced as the mesh is refined. A detailed comparison is reported in Section \ref{sec5}. 
\item The new HIFE space is locally $H^1$-conforming because the Frenet IFE basis functions satisfy the jump conditions precisely. This is in contrast to most existing approaches, in which the jump conditions are satisfied only approximately or in a weak sense.
\item We prove that the HIFE spaces possess optimal approximation capability and, via a rigorous {\it a priori} error analysis, show that the corresponding SDG methods achieve optimal convergence rates in both the energy norm and the standard $L^2$ norm.
\end{itemize}

Although our construction relies on the Frenet IFE spaces introduced in \cite{2025AdjeridLinMeghaichi}, the present work differs in three essential ways. First, \cite{2025AdjeridLinMeghaichi} analyzes a fully DG-IFE method on all elements, whereas we develop a hybrid scheme in which the DG formulation is restricted to interface elements and the CG formulation is used elsewhere. Second, the penalty in our bilinear form is imposed on a much smaller edge set than the full set of interior edges, which requires a separate well-posedness argument. Third, the hybrid IFE space is smaller, and in fact, a subspace of the DG-IFE space in \cite{2025AdjeridLinMeghaichi}; its approximation properties and the corresponding {\it a priori} estimates therefore do not follow from \cite{2025AdjeridLinMeghaichi} and are established here. 

The remainder of this article is organized as follows. In Section \ref{sec2}, we define some notation and review the local Frenet IFE spaces. In Section \ref{sec3}, we derive the high-order SDG method and prove the well-posedness of the scheme. In Section \ref{sec4}, we show that the SDG schemes achieve optimal convergence rates in the energy and $L^2$ norms. In Section \ref{sec5}, we  present numerical examples to verify the theoretical analysis. A brief conclusion is given in Section \ref{sec6}.

\section{Preliminaries}\label{sec2}
In this section, we introduce some necessary notation and recall the local Frenet IFE space introduced in \cite{2024AdjeridLinMeghaichi,2025AdjeridLinMeghaichi}.
\subsection{Notation}\label{sub2.1}
  Let $\hat{\Omega}$ be any subset of $\Omega$, and let $W^{k}_p(\hat{\Omega})$ denote the standard Sobolev space with the norm $\lVert\cdot\rVert_{W^{k}_p(\hat{\Omega})}$ and the semi-norm $|\cdot|_{W^{k}_p(\hat{\Omega})}$, where $k\geq 0$ is an integer and $1\leq p\leq\infty$. When $p=2$, we use $H^{k}(\hat{\Omega})$ to denote $W^{k}_2(\hat{\Omega})$ with the norm $\lVert\cdot\rVert_{H^{k}(\hat{\Omega})}$ and the semi-norm $|\cdot|_{H^{k}(\hat{\Omega})}$. For convenience, $(\cdot,\cdot)_{\partial\hat{\Omega}}$ and $\lVert\cdot\rVert_{L^2(\partial\hat{\Omega})}$ denote the inner product and the norm on $L^2(\partial\hat{\Omega})$, respectively.
If $\hat{\Omega}$ intersects $\Gamma$, we denote the subdomains by $\hat{\Omega}^{\pm}=\hat{\Omega}\cap\Omega^{\pm}$. 
For $m>\frac{3}{2}$, we define the following Sobolev space 
\begin{equation}\label{eq: Hm}
\mathcal{H}^{m}(\hat{\Omega},\Gamma;\beta)=\Big\{v\in L^2(\hat{\Omega}):v|_{\hat{\Omega}^{\pm}}\in H^{m}(\hat{\Omega^\pm}),[v]_{\Gamma\cap\hat{\Omega}}=[\beta\partial_{\textbf{n}}v]_{\Gamma\cap\hat{\Omega}}=0\Big\},
\end{equation}
equipped with the norm $\|\cdot\|^2_{\mathcal{H}^m(\hat{\Omega})}=\|\cdot\|^2_{\mathcal{H}^m(\hat{\Omega}^+)}+\|\cdot\|^2_{\mathcal{H}^m(\hat{\Omega}^-)}$ and the semi-norm
$|\cdot|^2_{\mathcal{H}^m(\hat{\Omega})}=|\cdot|^2_{\mathcal{H}^m(\hat{\Omega}^+)}+|\cdot|^2_{\mathcal{H}^m(\hat{\Omega}^-)}$.

Let $\mathcal{T}_h$ be a Cartesian rectangular mesh of $\Omega$ with mesh size $h = \max_{K \in \mathcal{T}_h} \operatorname{diam}(K)$, and let $\mathcal{E}_h$ denote the set of edges of $\mathcal{T}_h$. We assume $h$ is sufficiently small so that the following hypotheses hold:
\begin{itemize}
\item $\mathbf{H_1}$: The interface $\Gamma$ intersects each edge in at most one point, unless that edge is part of $\Gamma$.
\item $\mathbf{H_2}$: The interface $\Gamma$ intersects each element in at most two edges.  
\end{itemize}


We partition the mesh as $\mathcal{T}_h = \mathcal{T}_h^{i} \cup \mathcal{T}_h^{n}$, 
where $\mathcal{T}_h^{i}$ and $\mathcal{T}_h^{n}$ denote the sets of interface and non-interface elements, respectively. 
Let $\mathring{\mathcal{E}}_h$ be the set of interior edges of $\mathcal{T}_h$. We denote by $\mathring{\mathcal{E}}_h^{i}$ and $\mathring{\mathcal{E}}_h^{n}$ the sets of interior interface edges and interior non-interface edges, respectively. In addition,  $\mathring{\mathcal{E}}_h^s$ denotes the collection of edges of all interface elements, and $\mathcal{T}_h^s$ the set of elements that have at least one edge contained in $\mathring{\mathcal{E}}_h^s$. Finally, we let 
$\mathcal{N}_h$ be the set of all vertices and let $\mathcal{N}_h^i$  be the set of vertices of all interface elements. 

For every interior edge $B\in\mathring{\mathcal{E}}_h$, let $K_{B,1}$
 and $K_{B,2}$ denote the two elements sharing $B$. For a function $u$ defined on $K_{B,1}\cup K_{B,2}$, we define the average and jump of $u$ on $B$ by
\[
\{u\}_B = \frac{1}{2} \Big( (u|_{K_{B,1}})|_B + (u|_{K_{B,2}})|_B \Big), 
\qquad
[u]_B = (u|_{K_{B,1}})|_B - (u|_{K_{B,2}})|_B.
\]
For $m>\frac{3}{2}$, we define the broken Sobolev space on the mesh $\mathcal{T}_h$
 \begin{equation}
\begin{aligned}
\mathcal{H}^m(\mathcal{T}_h,\Gamma;\beta)=\Big\{v:&~v|_{K}\in H^m(K),~\forall K \in \mathcal{T}_h^n, \text{ and } v|_{K} \in\mathcal{H}^{m}(K,\Gamma;\beta)~\forall K \in \mathcal{T}_h^i;\\
&~v \text{ is continuous across each edge } B\in\mathring{\mathcal{E}}_h\setminus \mathring{\mathcal{E}}_h^s;~v|_{\partial\Omega}=0\Big\}.
\label{2.1}
\end{aligned}
\end{equation}
\begin{remark}
Following the definition of \eqref{eq: Hm} and \eqref{2.1}, we have $\mathcal{H}^m(\mathcal{T}_h,\Gamma;\beta)\subset \mathcal{H}^m(\Omega,\Gamma;\beta)$, and for each function $v\in\mathcal{H}^m(\mathcal{T}_h,\Gamma;\beta)$, it is continuous at nodes $X\in \mathcal{N}_h\setminus\mathcal{N}_h^i$.
\end{remark}

\subsection{Geometry Conforming IFE Spaces via Frenet Mapping}\label{sub2.2}
In this subsection, we briefly review the Frenet transformation and the local Frenet IFE space introduced in \cite{2024AdjeridLinMeghaichi, 2025AdjeridLinMeghaichi}. 
Assume that the interface $\Gamma$ is a simple connected $C^2$ curve, which can be parametrized by  $\mathbf{g}:~[\xi_s,\xi_e]\rightarrow \Omega$ such that

\begin{equation*}
\mathbf{g}(\xi)=(g_1(\xi),g_2(\xi))^T.
\end{equation*}
The unit tangent vector $\bm{\tau}(\xi)$, the unit normal vector $\mathbf{n}(\xi)$, and the signed curvature $\kappa(\xi)$ at the point $\mathbf{g}(\xi)\in\Gamma$ are defined by
\begin{equation*}
\bm{\tau}(\xi)=\frac{1}{\|\mathbf{g}'(\xi)\|}\mathbf{g}'(\xi),\quad
\mathbf{n}(\xi)=\begin{bmatrix}
0 & 1 \\
-1 & 0
\end{bmatrix}
\bm{\tau}(\xi),\quad
\mathbf{\kappa}(\xi)=\frac{1}{\|\mathbf{g}'(\xi)\|^3}(\mathbf{g}'(\xi))^{T}\begin{bmatrix}
0 & 1 \\
-1 & 0
\end{bmatrix}\mathbf{g}''(\xi).
\end{equation*}
A curve parallel to the interface $\Gamma$ with an offset distance $\eta$ from $\Gamma$ is represented as follows
\begin{equation*}
\mathbf{x}(\eta,\xi)=\begin{bmatrix}
x(\eta,\xi)  \\
y(\eta,\xi) 
\end{bmatrix}=P_{\Gamma}(\eta,\xi) =\mathbf{g}(\xi)+\eta\mathbf{n}(\xi),~\forall \xi\in[\xi_s,\xi_e].
\end{equation*}
There exists a tubular neighborhood $N_{\Gamma}(\epsilon)$ with some $\epsilon>0$ denoted by
\begin{equation*}
N_{\Gamma}(\epsilon)=P_{\Gamma}([-\epsilon,\epsilon]\times[\xi_s,\xi_e]),
\end{equation*} 
such that $P_\Gamma$ is a bijection on $N_{\Gamma}(\epsilon)$. Hence, there exists an inverse transformation $R_{\Gamma}:~N_{\Gamma}(\epsilon)\rightarrow[-\epsilon,\epsilon]\times[\xi_s,\xi_e]$ such that 
\begin{equation*}
\begin{bmatrix}
\eta  \\
\xi 
\end{bmatrix}=
\begin{bmatrix}
\eta(x,y)  \\
\xi(x,y) 
\end{bmatrix}=
R_{\Gamma}(x,y)=P^{-1}_{\Gamma}(x,y)\in[-\epsilon,\epsilon]\times[\xi_s,\xi_e],~\forall\mathbf{x}\in N_{\Gamma}(\epsilon).
\end{equation*}

Assume that the mesh size $h$ is sufficiently small such that $N_{\Gamma}(h)\subset N_{\Gamma}(\epsilon)$. Then all the interface elements are within the tubular neighborhood $N_{\Gamma}(h)$ of $\Gamma$. Let $K\in\mathcal{T}_h^i$ be an interface element with vertices 
$A_i\in N_{\Gamma}(\epsilon),i=1,2,3,4$. Then there exists a unique $\xi_i\in[\xi_s,\xi_e]$ such that $\|A_i-\mathbf{g}(\xi_i)\|=\operatorname{dist}(\Gamma,A_i).$  We form the rectangle $\hat{K}_F=[-h,h]\times[a_K,b_K]$ and define the fictitious element $K_F$ of the interface element $K$ as 
\begin{equation*}
K_F=P_{\Gamma}(\hat{K}_F)=P_{\Gamma}
\left([-h,h]\times[a_K,b_K]\right),
\end{equation*}
where $a_K=\min(\xi_1,\xi_2,\xi_3,\xi_4),~b_K=\max(\xi_1,\xi_2,\xi_3,\xi_4)$. 
Figure \ref{fig1} is an illustration of the Frenet transformation $P_\Gamma$ and its inverse $R_\Gamma$.  

\begin{figure}[!htb]
    \centering
    \includegraphics[width=0.75\textwidth]{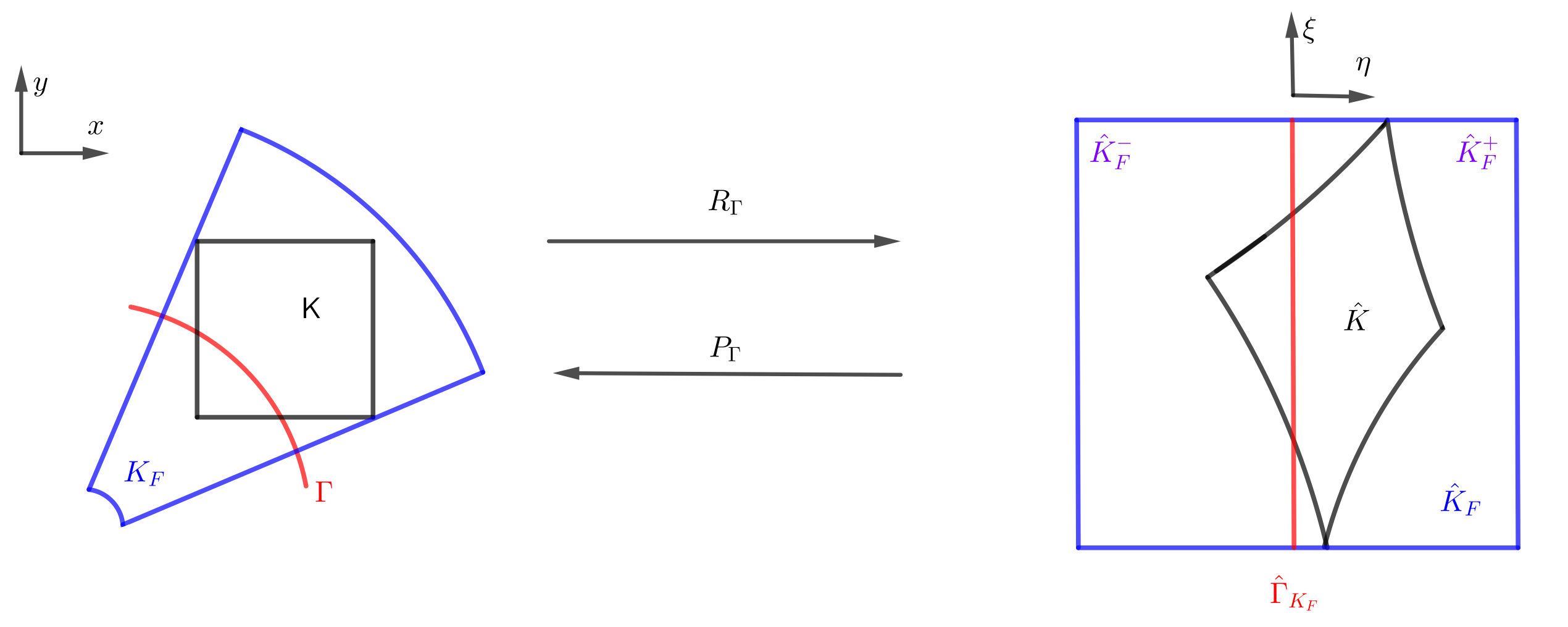}
    \caption{An illustration of the Frenet transformation $P_{\Gamma}$ and its inverse $R_{\Gamma}$ on an interface element $K$.}
    \label{fig1}
\end{figure}

Denote $\hat{u}=u\circ P_{\Gamma}$ and $\hat{\beta}=\beta\circ P_{\Gamma}$, then $\hat{u}(\eta,\xi)$ satisfies the following interface jump conditions:
\begin{equation}\label{2.3}
[ \hat{u} ]_{\hat{\Gamma}_{K_F}} = 0,~~
[ \hat{\beta}\,\hat{u}_{\eta} ]_{\hat{\Gamma}_{K_F}} = 0,
\end{equation}
and
\begin{equation}\label{2.3.2}
\left[ \hat{\beta}\,\frac{\partial^j}{\partial \eta^j}\mathcal{L}(\hat{u}) \right]_{\hat{\Gamma}_{K_F}} = 0,
\qquad j = 0,1,\dots,m-2 .
\end{equation}
Here, $\mathcal{L}(\hat{u}(\eta,\xi)) = \Delta u(\mathbf{x})$ denotes the Laplacian in $(\eta,\xi)$ coordinates that takes the form
\begin{equation}
\begin{aligned}\label{2.2}
\mathcal{L}(\hat{u}(\eta,\xi)):=\hat{u}_{\eta\eta}(\eta,\xi)+J_0(\eta,\xi)\hat{u}_{\xi\xi}(\eta,\xi)+J_1(\eta,\xi)\hat{u}_{\eta}(\eta,\xi)+J_2(\eta,\xi)\hat{u}_{\xi}(\eta,\xi),
\end{aligned}
\end{equation}
where 
\begin{equation*}
\begin{aligned}
&J_0(\eta,\xi)=\left(\frac{\psi(\eta,\xi)}{\lVert\mathbf{g}'(\xi)\rVert}\right)^2,~~J_1(\eta,\xi)=\kappa(\xi)\psi(\eta,\xi),\\&J_2(\eta,\xi)=-\left(\frac{\psi(\eta,\xi)}{\lVert\mathbf{g}'(\xi)\rVert}\right)^2\left(\eta\kappa'(\xi)\psi(\eta,\xi)+\frac{\mathbf{g}'(\xi)\cdot\mathbf{g}''(\xi)}{\lVert\mathbf{g}'(\xi)\rVert^2}\right),\\
&\psi(\eta,\xi)=(1+\eta\kappa(\xi))^{-1}.
\end{aligned}
\end{equation*}

For a given set $I\subset\mathbb{R}$, let $P_m(I)$ be the space of polynomials of degree up to $m$ on $I$. Imposing the jump conditions \eqref{2.3}-\eqref{2.3.2} weakly yields
\begin{align}
\int_{\hat{\Gamma}_{K_F}}
[ \hat{\phi} ] v\,ds &= 0, \quad
\int_{\hat{\Gamma}_{K_F}}
[ \hat{\beta}\,\hat{\phi}_{\eta} ] v\,ds = 0,
\quad \forall ~v \in P_m(\hat{\Gamma}_{K_F}),\label{2.4} \\
\int_{\hat{\Gamma}_{K_F}}
\left[\hat{\beta}
\frac{\partial^{j}}{\partial \eta^{j}}
\mathcal{L}(\hat{\phi})
\right]
v\,ds &= 0,
\quad \forall v \in P_m(\hat{\Gamma}_{K_F}),
\quad j = 0,1,\dots,m-2 , \label{2.5}
\end{align}
The conditions in \eqref{2.4} are equivalent to those in \eqref{2.3}, while the condition in \eqref{2.5} is equivalent in a weak sense to \eqref{2.3.2}. 
The following local Frenet IFE space is well defined on the fictitious element $K_F$:
\begin{equation*}
\hat{\mathcal{V}}^{m}_{\beta}(\hat{K}_{F})
=
\left\{
\hat{\phi} : \hat{K}_{F} \to \mathbb{R}
\;\middle|\;
\hat{\phi}|_{\hat{K}_{F}^{\pm}} \in Q_{m}(\hat{K}_{F}^{\pm})
\ \text{and}\ 
\hat{\phi} \ \text{satisfies \eqref{2.4}-\eqref{2.5}}
\right\}.
\end{equation*} 
Using the inverse Frenet map $R_{\Gamma}$ on the fictitious elements $K_F$, we get the corresponding local IFE space defined on the interface element $K$ as follows:
\begin{equation*}
\mathcal{V}^{m}_{\beta}(K)
=
\left\{
\hat{\phi}\circ R_{\Gamma}\big|_{K}
\;\middle|\;
\hat{\phi}\in
\hat{\mathcal{V}}^{m}_{\beta}(\hat{K}_{F})
\right\}.
\end{equation*}

\begin{remark}
    The local IFE space $\mathcal{V}^{m}_{\beta}(K)$ has a dimension of $(m+1)^2$, same as the standard tensor-product space $Q_m$, although the functions in $\mathcal{V}^{m}_{\beta}(K)$ are no longer piecewise polynomials. For more details on constructing Frenet-IFE basis functions, we refer to \cite{2024AdjeridLinMeghaichi}.
\end{remark}


\section{High-Order Selective Discontinuous Galerkin Method}\label{sec3}
In this section, we introduce a new high-order hybrid IFE space, derive the selective discontinuous Galerkin method, and prove the well-posedness of the resulting SDG scheme. Throughout the paper, we assume without loss of generality that $\beta^-\leq \beta^+$.
\subsection{SDG Method}
On each element $K\in\mathcal{T}_h$, we define the local IFE space
\begin{equation*}
S_h^m(K) =
\begin{cases}
Q_m(K), & \text{if } K\in\mathcal{T}_h^n,\\
\mathcal{V}^{m}_{\beta}(K), & \text{if } K\in\mathcal{T}_h^i.\\
\end{cases}
\end{equation*}
The high-order HIFE space on  $\Omega$ is defined as follows
\begin{equation*}
S_h^m(\Omega)=   \{v\in\mathcal{H}^m(\mathcal{T}_h,\Gamma;\beta):v|_{K}\in S ^  m _h(K),~\forall K \in \mathcal{T}_h\}.
\end{equation*}

Clearly, $S_h^m(\Omega)$ is a subspace of $\mathcal{H}^m(\mathcal{T}_h,\Gamma;\beta)$. We now describe the SDG method for the interface problem \eqref{1.1}-\eqref{1.4}: find $u_h \in S_h^m(\Omega)$ such that
\begin{equation}
a_h(u_h,v_h)=(f,v_h),\quad\forall v_h\in S_h^m(\Omega), \label{3.1}
\end{equation}
where the bilinear form $a_h:S_h^m(\Omega)\times S_h^m(\Omega)\rightarrow \mathbb{R}$ is defined by
\begin{equation}
\begin{aligned}
a_h(u,v)=
&\sum_{K\in\mathcal{T}_h}\int_{K}\beta\nabla u\cdot \nabla v\,dX-\sum_{B\in\mathring{\mathcal{E}}_h^s}\int_B\{\beta\nabla u\cdot\mathbf{n}_B\}[v]\,ds\\
&+\epsilon\sum_{B\in\mathring{\mathcal{E}}_h^s}\int_B\{\beta\nabla v\cdot\mathbf{n}_B\}[u]\,ds+\sum_{B\in\mathring{\mathcal{E}}_h^s}\int_B\frac{\sigma_B^0}{|B|^\alpha}[u][v]ds.\label{3.2}
\end{aligned}
\end{equation}

Compared with the DG method \cite{2024AdjeridLinMeghaichi}, the SDG method applies the Frenet IFE basis functions only on the interface elements, rather than throughout the entire domain.  Unlike the CG method, the SDG method applies the nodal basis functions only on the non-interface elements. Figure~\ref{fig2} shows the distribution of global DoFs for the DG, SDG, and CG methods. The detailed quantitative comparison is reported in Section \ref{sec5}. By the  ``CG method" for interface problem we mean the partially penalized IFE scheme \cite{2015LinLinZhang}, whose global DoF structure is isomorphic to the standard CG method. 
Compared with the CG and DG methods, the penalty in our SDG method is imposed on all edges of interface elements (i.e., $\mathring{\mathcal{E}}_h^s$), rather than on all interface edges $\mathring{\mathcal{E}}_h^i$ as in the CG method, or on all interior edges $\mathring{\mathcal{E}}_h$ as in the DG method.
These edges are shown in red in Figure~\ref{fig2}. Red edges mark the edges on which the penalty terms are imposed for each method. The blue and green dots represent the DoFs of the Frenet IFE basis and the FE nodal basis, respectively.
\begin{figure}[H]
    \centering
    
    \begin{subfigure}{0.3\textwidth}
        \centering
        \includegraphics[width=\linewidth]{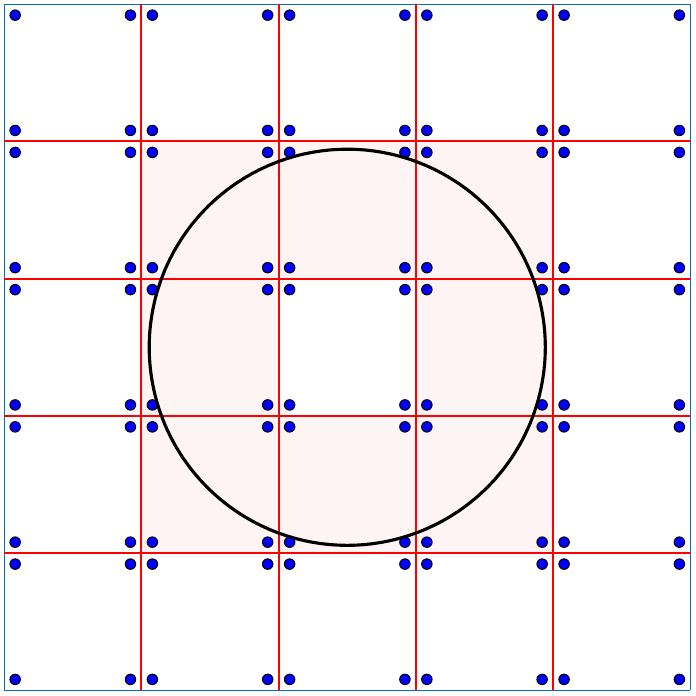}
        \caption{DG Method}
    \end{subfigure}
    \hfill
    \begin{subfigure}{0.3\textwidth}
        \centering
        \includegraphics[width=\linewidth]{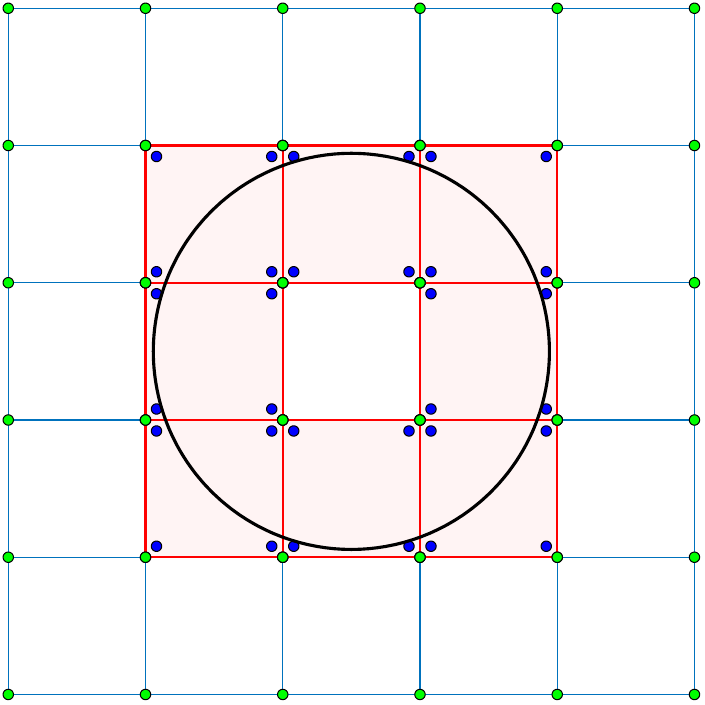}
        \caption{SDG Method}
    \end{subfigure}
    \hfill
    \begin{subfigure}{0.3\textwidth}
        \centering
        \includegraphics[width=\linewidth]{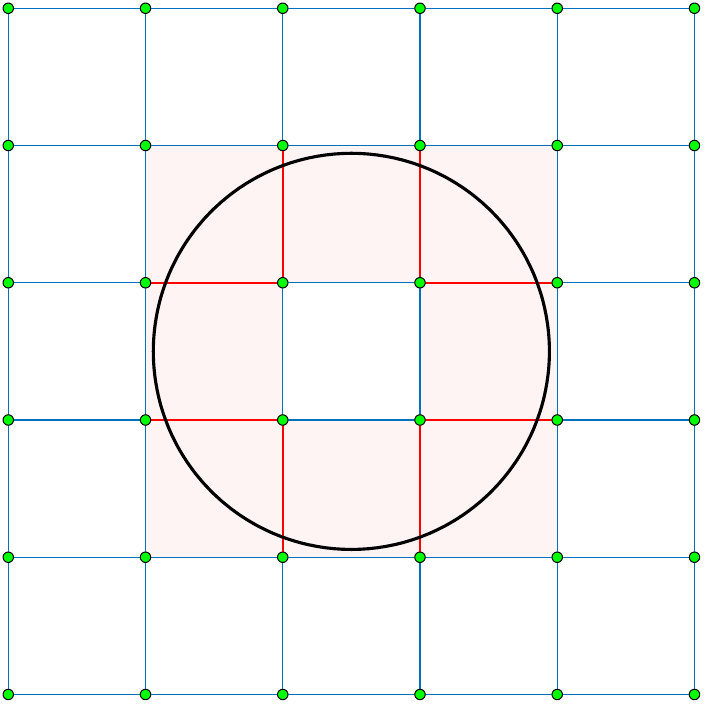}
        \caption{CG (PPIFE) Method}
    \end{subfigure}
    
    \caption{Distribution of global and local DoFs for the DG, SDG, and CG (PPIFE) methods.  }
    \label{fig2}
\end{figure}

\subsection{Well-posedness of SDG method}
We now prove the well-posedness of the SDG method \eqref{3.1} by establishing the coercivity and continuity of the bilinear form $a_h(\cdot,\cdot)$ defined in \eqref{3.2} on $S^m_h(\Omega)$ with respect to the following energy norm
\begin{equation}
\begin{aligned}
\|v\|^2_{h}:=&\sum_{K\in\mathcal{T}_h}\int_{K}\beta\nabla v\cdot \nabla v\,dX+\sum_{B\in\mathring{\mathcal{E}}_h^s}\int_B\frac{\sigma_B^0}{|B|^\alpha}[v][v]ds,\quad\forall v\in S_h^m(\Omega).\label{3.3}
\end{aligned}
\end{equation}

The following lemma shows that $\|\cdot\|_h$, defined in \eqref{3.3}, is indeed a norm on $S^m_h(\Omega)$.
\begin{lemma}\label{lemma1}
Assume that $\sigma_B^0>0$ for all $B\in\mathring{\mathcal{E}}_h^s$ and $\alpha\ge0$, then the functional $\|\cdot\|_{h}$ in \eqref{3.3} defines a norm on $S_h^m(\Omega)$.
\end{lemma}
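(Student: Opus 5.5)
The functional $\|\cdot\|_h$ comes from the symmetric bilinear form
\[
\langle u,v\rangle_h:=\sum_{K\in\mathcal{T}_h}\int_K\beta\nabla u\cdot\nabla v\,dX+\sum_{B\in\mathring{\mathcal{E}}_h^s}\int_B\frac{\sigma_B^0}{|B|^\alpha}[u][v]\,ds .
\]
This form is positive semidefinite because $\beta\ge\beta^->0$, $\sigma_B^0>0$ and $|B|^\alpha>0$. Hence $\|\cdot\|_h$ is a seminorm: homogeneity, the triangle inequality (via Cauchy--Schwarz) and nonnegativity all follow directly. The only thing to prove is definiteness: if $v\in S_h^m(\Omega)$ and $\|v\|_h=0$, then $v\equiv0$.

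Suppose $\|v\|_h=0$. Since every term is nonnegative, $\nabla v=0$ on each $K$ (on each of $K^\pm$ for an interface element), and $[v]_B=0$ on every $B\in\mathring{\mathcal{E}}_h^s$.

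On a non-interface element, $v|_K\in Q_m(K)$, so $v$ is constant there. On an interface element, $v$ is constant on each of $K^+$ and $K^-$. These subregions are connected by hypotheses $\mathbf{H_1}$--$\mathbf{H_2}$. By the definition of $S_h^m(\Omega)\subset\mathcal{H}^m(\mathcal{T}_h,\Gamma;\beta)$ we have $[v]_{\Gamma\cap K}=0$, so the two constants coincide and $v$ is a single constant on all of $K$. Thus $v$ is piecewise constant on $\mathcal{T}_h$.

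Next I glue these constants together across all edges. On edges of $\mathring{\mathcal{E}}_h\setminus\mathring{\mathcal{E}}_h^s$, $v$ is continuous by the definition \eqref{2.1} of $\mathcal{H}^m(\mathcal{T}_h,\Gamma;\beta)$. On edges of $\mathring{\mathcal{E}}_h^s$, $[v]_B=0$ from the vanishing penalty term; since both traces are constants, this means the constants agree. So the constants on any two elements sharing an interior edge are equal. The dual graph of a Cartesian mesh of the rectangle $\Omega$ is connected, so $v$ equals a single global constant $c$ on $\Omega$.

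Finally, \eqref{2.1} imposes $v|_{\partial\Omega}=0$, so $c=0$ and $v\equiv0$.

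The main subtle point is the interface-element step. One must use that the Frenet IFE functions satisfy $[v]_\Gamma=0$ exactly, which holds since \eqref{2.4} enforces continuity and is equivalent to \eqref{2.3}. One must also use that $K^\pm$ are connected, so that a vanishing gradient forces a single constant. Everything else is routine.
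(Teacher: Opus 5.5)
Your proof is correct and follows the same route as the paper: the nonnegative terms all vanish, giving $\nabla v=0$ on each element and $[v]_B=0$ on $\mathring{\mathcal{E}}_h^s$, and then continuity on the remaining edges together with $v|_{\partial\Omega}=0$ forces $v\equiv0$. You also supply two details the paper leaves implicit: on an interface element the constants on $K^+$ and $K^-$ coincide because $[v]_{\Gamma\cap K}=0$, and the element-adjacency graph of the Cartesian mesh is connected.
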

\begin{proof}
Suppose $\|v\|_{h}=0$, then $\|v\|_{h}^2=0$, i.e.,
\begin{equation*}
\sum_{K\in\mathcal{T}_h}\int_{K}\beta\nabla v\cdot \nabla v\,dX+\sum_{B\in\mathring{\mathcal{E}}_h^s}\int_B\frac{\sigma_B^0}{|B|^\alpha}[v][v]ds=0.
\end{equation*}
Since each term in the above equation is nonnegative, we have
\begin{equation}
\int_{K}\beta\nabla v\cdot \nabla v\,dX=0,~~\forall K\in\mathcal{T}_h,\label{3.4}
\end{equation}
and
\begin{equation}
\int_B\frac{\sigma_B^0}{|B|^\alpha}[v][v]ds=0,~~\forall B\in\mathring{\mathcal{E}}_h^s.\label{3.5}
\end{equation}
From equation \eqref{3.4}, we have
\begin{equation*}
\int_{K}\beta\nabla v\cdot \nabla v\,dX=\int_{K}\beta|\nabla v|^2\,dX=0,~~\forall K\in\mathcal{T}_h.
\end{equation*}
Thus $\nabla v=0,$ on $K\in\mathcal{T}_h$, because $\beta>0$.\\
From equation \eqref{3.5}, we have
\begin{equation*}
\int_B\frac{\sigma_B^0}{|B|^\alpha}[v][v]ds=\int_B\frac{\sigma_B^0}{|B|^\alpha}|[v]|^2ds=0,~~\forall B\in\mathring{\mathcal{E}}_h^s.
\end{equation*}
Thus $[v]=0, $ on $B\in\mathring{\mathcal{E}}_h^s$, since $\sigma_B^0>0$, and $\alpha\geq 0$. 
Therefore, $v$ is constant, with value $C_B$, on the edge $B$ from both sides $K_{B,1}$ and $K_{B,2}$.  Moreover, $v$ is continuous across each $B\in\mathring{\mathcal{E}}_h\setminus\mathring{\mathcal{E}}_h^s$, and $v|_{\partial\Omega}=0$, because $v\in S_h^m(\Omega)$. Hence $v\equiv0$ on $\Omega$.
Since $\|\cdot\|_h$ is easily seen to be a semi-norm, it is a norm.
\end{proof}

The following trace inequality on $S_h^m(K)$ is a direct consequence of Theorem 2 in \cite{2025AdjeridLinMeghaichi}. 
\begin{lemma}\label{lemma2}
For any element  $K\in\mathcal{T}_h$, we have
\begin{equation}
\|\beta\nabla v\|_{L^2(\partial K)}\leq C\frac{\beta^+}{\sqrt{\beta^-}}h^{-\frac{1}{2}}\|\sqrt{\beta}\nabla v\|_{L^2(K)},\quad\forall v\in S_h^m(K).\label{3.6}
\end{equation}
\end{lemma}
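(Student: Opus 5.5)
The plan is to split into the two cases $K\in\mathcal{T}_h^n$ and $K\in\mathcal{T}_h^i$, because $S_h^m(K)$ is defined differently on each.

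\emph{Non-interface elements.} For $K\in\mathcal{T}_h^n$ the coefficient $\beta$ is a single constant $\beta^s$ on $K$, with $s\in\{+,-\}$. Since $\nabla v\in (Q_m(K))^2$, the standard polynomial trace inverse inequality on a shape-regular rectangle gives $\|\nabla v\|_{L^2(\partial K)}\le C h^{-1/2}\|\nabla v\|_{L^2(K)}$. This comes from scaling to the reference square and using equivalence of norms on the finite-dimensional space $Q_m$. Multiplying by $\beta^s$ yields
\[
\|\beta\nabla v\|_{L^2(\partial K)}\le C\sqrt{\beta^s}\,h^{-1/2}\|\sqrt{\beta}\nabla v\|_{L^2(K)} .
\]
Under the standing assumption $\beta^-\le\beta^+$ we have $\sqrt{\beta^s}\le\sqrt{\beta^+}\le \beta^+/\sqrt{\beta^-}$, so \eqref{3.6} holds in this case.

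\emph{Interface elements.} For $K\in\mathcal{T}_h^i$ I would invoke Theorem 2 of \cite{2025AdjeridLinMeghaichi}. I expect it to give a trace inverse inequality for the Frenet IFE space $\mathcal{V}^m_\beta(K)$ of the form
\[
\|\nabla v\|_{L^2(\partial K)}\le C h^{-1/2}\|\nabla v\|_{L^2(K)},
\]
with $C$ independent of the interface location, possibly depending on $\beta^\pm$. It is also possible that Theorem 2 is already stated in the weighted form $\|\beta\nabla v\cdot\mathbf{n}\|_{L^2(\partial K)}\le C h^{-1/2}\|\sqrt\beta\nabla v\|_{L^2(K)}$. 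In that case only the constant needs tracking.

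Assuming the unweighted form, I would then insert the weights by crude bounds:
\[
\|\beta\nabla v\|_{L^2(\partial K)}\le \beta^+\|\nabla v\|_{L^2(\partial K)}\le C\beta^+h^{-1/2}\|\nabla v\|_{L^2(K)}\le C\frac{\beta^+}{\sqrt{\beta^-}}h^{-1/2}\|\sqrt\beta\nabla v\|_{L^2(K)}.
\]
The last step uses $\beta\ge\beta^-$ on $K$. This gives exactly the constant $\beta^+/\sqrt{\beta^-}$ appearing in \eqref{3.6}, which suggests this is the intended route.

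\emph{Main obstacle.} The main point needing care is matching the precise hypotheses and form of Theorem 2 in \cite{2025AdjeridLinMeghaichi}. That result is formulated for the fictitious-element construction on $K_F$ under the assumptions $\mathbf{H_1}$ and $\mathbf{H_2}$, and I need its constant $C$ to be uniform with respect to how $\Gamma$ cuts $K$. Our $\mathcal{V}^m_\beta(K)$ is literally their local space, so the restriction to $K$ is identical. If their statement is in terms of full $\|v\|_{H^1}$-type norms rather than gradient seminorms, I would apply it componentwise to the derivatives, or note that constants lie in the space and subtract the mean. Finally I would take $C$ as the maximum of the two case constants.
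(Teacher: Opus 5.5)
Your proposal is correct and matches the paper's proof. The paper also splits into the two element types. On interface elements it takes the bound directly from Theorem 2 of \cite{2025AdjeridLinMeghaichi}, which it quotes already in the weighted form \eqref{3.7}, so your fallback of inserting the weights by hand is not needed. On non-interface elements it uses the standard polynomial trace inverse inequality \eqref{3.8}. Your explicit step $\sqrt{\beta^s}\le\sqrt{\beta^+}\le\beta^+/\sqrt{\beta^-}$ is the step the paper leaves implicit when it says the conclusion ``follows directly.''
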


\begin{proof}
If $K$ is an interface element, from Theorem 2 in \cite{2025AdjeridLinMeghaichi}, we have
\begin{equation}
\|\beta\nabla v\|_{L^2(\partial K)}\leq C\frac{\beta^+}{\sqrt{\beta^-}}h^{-\frac{1}{2}}\|\sqrt{\beta}\nabla v\|_{L^2(K)},\quad\forall  v\in S_h^m(K).\label{3.7}
\end{equation}
If $K$ is a non-interface element, from \cite{2024AdjeridLinMeghaichi}, we have
\begin{equation}
\|\nabla v\|_{L^2(\partial K)}\leq Ch^{-\frac{1}{2}}\|\nabla v\|_{L^2(K)},\quad\forall v\in S_h^m(K).\label{3.8}
\end{equation}
The conclusion follows directly from  \eqref{3.7}  and \eqref{3.8}.
\end{proof}

The next lemma establishes coercivity of the bilinear form  $a_h(\cdot,\cdot)$ with respect to the energy norm $\|\cdot\|_h$.

\begin{lemma}\label{lemma3}
There exists a constant $\kappa>0$ such that
\begin{equation}
\kappa\|v_h\|_{h}^2\le a_h(v_h,v_h),\quad\forall v_h\in S_h^m(\Omega).\label{3.9}
\end{equation}
holds unconditionally for $\epsilon=1$, and holds for $\epsilon=0$ or $\epsilon=-1$ provided that the stabilization parameter $\sigma_B^0$ in $a_h(\cdot,\cdot)$ is sufficiently large.
\end{lemma}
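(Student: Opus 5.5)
Our approach is the standard interior-penalty argument, restricted to the edge set $\mathring{\mathcal{E}}_h^s$. Setting $u=v=v_h$ in \eqref{3.2}, the two consistency terms combine and we get
\begin{equation*}
a_h(v_h,v_h)=\sum_{K\in\mathcal{T}_h}\|\sqrt{\beta}\nabla v_h\|^2_{L^2(K)}+(\epsilon-1)\sum_{B\in\mathring{\mathcal{E}}_h^s}\int_B\{\beta\nabla v_h\cdot\mathbf{n}_B\}[v_h]\,ds+\sum_{B\in\mathring{\mathcal{E}}_h^s}\int_B\frac{\sigma_B^0}{|B|^\alpha}[v_h]^2\,ds .
\end{equation*}
For $\epsilon=1$ the middle term vanishes. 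Hence $a_h(v_h,v_h)=\|v_h\|_h^2$, and \eqref{3.9} holds with $\kappa=1$ for any $\sigma_B^0>0$. Lemma \ref{lemma1} guarantees that this is a genuine norm bound.

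For $\epsilon\in\{0,-1\}$ we have $|\epsilon-1|\le 2$, so the only task is to absorb the cross term. On each $B\in\mathring{\mathcal{E}}_h^s$ we apply Cauchy--Schwarz:
\begin{equation*}
\Big|\int_B\{\beta\nabla v_h\cdot\mathbf{n}_B\}[v_h]\,ds\Big|\le \frac{|B|^{\alpha/2}}{\sqrt{\sigma_B^0}}\|\{\beta\nabla v_h\cdot\mathbf{n}_B\}\|_{L^2(B)}\cdot\Big(\frac{\sigma_B^0}{|B|^{\alpha}}\Big)^{1/2}\|[v_h]\|_{L^2(B)}.
\end{equation*}
Next we bound $\|\{\beta\nabla v_h\cdot\mathbf{n}_B\}\|_{L^2(B)}$ by $\tfrac12\big(\|\beta\nabla v_h|_{K_{B,1}}\|_{L^2(\partial K_{B,1})}+\|\beta\nabla v_h|_{K_{B,2}}\|_{L^2(\partial K_{B,2})}\big)$. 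Lemma \ref{lemma2} then controls each piece by $C\frac{\beta^+}{\sqrt{\beta^-}}h^{-1/2}\|\sqrt\beta\nabla v_h\|_{L^2(K_{B,j})}$; it applies on both sides, whether $K_{B,j}$ is an interface or a non-interface element. Young's inequality with a parameter $\delta$ gives
\begin{equation*}
\Big|\sum_{B}\int_B\{\cdot\}[v_h]\Big|\le \frac{\delta}{2}\sum_{B}\frac{C^2(\beta^+)^2|B|^{\alpha}}{\beta^-\sigma_B^0\,h}\sum_{j=1,2}\|\sqrt\beta\nabla v_h\|^2_{L^2(K_{B,j})}+\frac{1}{2\delta}\sum_B\frac{\sigma_B^0}{|B|^\alpha}\|[v_h]\|^2_{L^2(B)}.
\end{equation*}
Each element has only four edges, so each $\|\sqrt\beta\nabla v_h\|^2_{L^2(K)}$ appears a bounded number of times, at most 4, and the double sum collapses to a multiple of $\sum_{K}\|\sqrt\beta\nabla v_h\|^2_{L^2(K)}$. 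Only elements in $\mathcal{T}_h^s$ are involved.

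The main obstacle is the scaling. We take $\alpha=1$, the natural choice, so that $|B|^\alpha/h\le 1$ on the quasi-uniform Cartesian mesh. For general $\alpha\ge 1$ we use $|B|\le h<1$, which still gives a uniform bound. With $\alpha=1$ we choose $\delta$ so that the penalty contribution is at most half of the penalty term, for example $\delta=2$ when $\epsilon=-1$ (since $|\epsilon-1|=2$). Then we require $\sigma_B^0\ge \sigma^\ast:=C'(\beta^+)^2/\beta^-$ with $C'$ large enough that the gradient contribution is at most half of $\sum_K\|\sqrt\beta\nabla v_h\|^2$. This yields $a_h(v_h,v_h)\ge\frac12\|v_h\|_h^2$, so $\kappa=\tfrac12$. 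The constant must not depend on how the interface cuts an element, and that uniformity is exactly what Lemma \ref{lemma2}, i.e. Theorem 2 of \cite{2025AdjeridLinMeghaichi}, supplies. The remaining bookkeeping concerns edges of $\mathring{\mathcal{E}}_h^s$ whose neighbor $K_{B,j}$ is a non-interface element; these are handled by \eqref{3.8}, already included in Lemma \ref{lemma2}.
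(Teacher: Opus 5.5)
Your proof is correct and follows the paper's argument: the same identity with the factor $(\epsilon-1)$, Cauchy--Schwarz on each edge of $\mathring{\mathcal{E}}_h^s$, the trace inequality of Lemma~\ref{lemma2} applied on both neighboring elements, the at-most-four-edges overlap count, and Young's inequality with a parameter, giving the threshold $\sigma_B^0\ge C(\beta^+)^2/\beta^-$ under $\alpha\ge 1$ and $h\le 1$. The only difference is cosmetic: you weight the Cauchy--Schwarz split by $\sqrt{\sigma_B^0}$, so the jump part is absorbed directly into half of the penalty term and $\kappa=\tfrac12$ comes out explicitly. The paper instead keeps $\sigma_B^0$ out of the split and then requires it to dominate the Young's constant $C(\beta^+)^2/(\delta\beta^-)$.
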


\begin{proof}
For $\epsilon=1$, coercivity follows directly from the definitions of $a_h(\cdot,\cdot)$ and $\|\cdot\|_h$ with $\kappa=1$.

For $\epsilon=-1,0$, we know
\begin{equation}
\begin{aligned}
a_h(v_h,v_h)=&\sum_{K\in\mathcal{T}_h}\int_{K}\beta\nabla v_h\cdot \nabla v_h\,dX+(\epsilon-1)\sum_{B\in\mathring{\mathcal{E}}_h^s}\int_B\{\beta\nabla v_h\cdot\mathbf{n}_B\}[v_h]\,ds\\&+\sum_{B\in\mathring{\mathcal{E}}_h^s}\int_B\frac{\sigma_B^0}{|B|^\alpha}[v_h][v_h]ds.\label{3.10}
\end{aligned}
\end{equation}
For the second term on the right hand side of \eqref{3.10}, the Cauchy-Schwarz inequality gives
\begin{equation*}
\begin{aligned}
\sum_{B\in\mathring{\mathcal{E}}_h^s}\int_B\{\beta\nabla v_h\cdot\mathbf{n}_B\}[v_h]\,ds
\leq&\sum_{B\in\mathring{\mathcal{E}}_h^s}\|\{\beta\nabla v_h\cdot\mathbf{n}_B\}\|_{L^2(B)}\|[v_h]\|_{L^2(B)}\\
\leq&\sum_{B\in\mathring{\mathcal{E}}_h^s}\|\{\beta\nabla v_h\cdot\mathbf{n}_B\}\|_{L^2(B)}\left(\frac{1}{|B|^{\alpha}}\right)^{\frac{1}{2}-\frac{1}{2}}\|[v_h]\|_{L^2(B)}.
\end{aligned}
\end{equation*}
Consider an interior edge $B\in\mathring{\mathcal{E}}_h^s$ shared by the elements $K_{B,1}$ and $K_{B,2}$. By the triangle inequality and the trace inequality \eqref{3.6}, we obtain
\begin{equation*}
\begin{aligned}
\|\{\beta\nabla v_h\cdot\mathbf{n}_B\}\|_{L^2(B)}
\leq&\frac{1}{2}\|(\beta\nabla v_h\cdot\mathbf{n}_B)|_{K_{B,1}}\|_{L^2(B)}+\frac{1}{2}\|(\beta\nabla v_h\cdot\mathbf{n}_B)|_{K_{B,2}}\|_{L^2(B)}\\
\leq& C\frac{\beta^+}{\sqrt{\beta^-}}h^{-\frac{1}{2}}\Big(\|\sqrt{\beta}\nabla v_h \|_{L^2(K_{B,1})}+\|\sqrt{\beta}\nabla v_h \|_{L^2(K_{B,2})}\Big).
\end{aligned}
\end{equation*}
Then for $0< h\leq 1$, $\alpha\geq 1$, and using $c_1{h}\leq|B|\leq c_2 h$, we have
\begin{equation*}
\begin{aligned}
&\int_B\{\beta\nabla v_h\cdot\mathbf{n}_B\}[v_h]\,ds\\
\leq&C\frac{\beta^+}{\sqrt{\beta^-}}h^{\frac{\alpha}{2}-\frac{1}{2}}\left(\|\sqrt{\beta}\nabla v_h\|^2_{L^2(K_{B,1})}+\|\sqrt{\beta}\nabla v_h\|^2_{L^2(K_{B,2})}\right)^{\frac{1}{2}}\left(\frac{1}{h^{\alpha}}\right)^{\frac{1}{2}}\|[v_h]\|_{L^2(B)}\\
\leq& C\frac{\beta^+}{\sqrt{\beta^-}}\left(\|\sqrt{\beta}\nabla v_h\|^2_{L^2(K_{B,1})}+\|\sqrt{\beta}\nabla v_h\|^2_{L^2(K_{B,2})}\right)^{\frac{1}{2}}\left(\frac{1}{h^{\alpha}}\right)^{\frac{1}{2}}\|[v_h]\|_{L^2(B)}.
\end{aligned}
\end{equation*}
A quadrilateral element has at most $4$ neighbors. Thus, we have
\begin{equation*}
\begin{aligned}
&\sum_{B\in\mathring{\mathcal{E}}_h^s}\int_B\{\beta\nabla v_h\cdot\mathbf{n}_B\}[v_h]\,ds\\
\leq &C\frac{\beta^+}{\sqrt{\beta^-}}
\bigg(\sum_{B\in\mathring{\mathcal{E}}_h^s}\frac{1}{h^{\alpha}}\|[v_h]\|^2_{L^2(B)}\bigg)^{\frac{1}{2}}       
\bigg(\sum_{B\in\mathring{\mathcal{E}}_h^s}\Big(\|\sqrt{\beta}\nabla v_h\|^2_{L^2(K_{B,1})}+\|\sqrt{\beta}\nabla v_h\|^2_{L^2(K_{B,2})}\Big)\bigg)^{\frac{1}{2}}\\
\leq& C\frac{\beta^+}{\sqrt{\beta^-}}\bigg(\sum_{B\in\mathring{\mathcal{E}}_h^s}\frac{1}{h^{\alpha}}\|[v_h]\|^2_{L^2(B)}\bigg)^{\frac{1}{2}}       \bigg(\sum_{K\in\mathcal{T}_h^s}\|\sqrt{\beta}\nabla v_h\|^2_{L^2(K)}\bigg)^{\frac{1}{2}}\\
\leq&\frac{\delta}{2}\sum_{K\in\mathcal{T}_h^s}\|\beta^{\frac{1}{2}}\nabla v_h\|^2_{L^2(K)}+C\frac{(\beta^+)^2}{\delta\beta^-}\sum_{B\in\mathring{\mathcal{E}}_h^s}\frac{1}{h^{\alpha}}\|[v_h]\|^2_{L^2(B)},
\end{aligned}
\end{equation*}
where the final step uses Young’s inequality. Combining the above estimates, we obtain a lower bound for $a_h(v_h,v_h)$:
\begin{equation*}
\begin{aligned}
a_h(v_h,v_h)
&\geq\sum_{K\in\mathcal{T}_h}\|\beta^\frac{1}{2}\nabla v_h\|^2_{L^2(K)}+\frac{\delta(\epsilon-1)}{2}\sum_{K\in\mathcal{T}_h^s}\|\beta^{\frac{1}{2}}\nabla v_h\|^2_{L^2(K)}\\&~~~~~~+\frac{C(\beta^+)^2(\epsilon-1)}{\delta\beta^-}\sum_{B\in\mathring{\mathcal{E}}_h^s}\frac{1}{h^{\alpha}}\|[v_h]\|^2_{L^2(B)}+\sum_{B\in\mathring{\mathcal{E}}_h^s}\frac{\sigma_B^0}{h^\alpha}\|[v_h]\|^2_{L^2(B)}\\
&\geq(1-\frac{\delta}{2}|1-\epsilon|)\sum_{K\in\mathcal{T}_h}\|\beta^\frac{1}{2}\nabla v_h\|^2_{L^2(K)}+\frac{\sigma_B^0-\frac{C(\beta^+)^2|1-\epsilon|}{\delta\beta^-}}{h^\alpha}\sum_{B\in\mathring{\mathcal{E}}_h^s}\|[v_h]\|^2_{L^2(B)}.\label{39}
\end{aligned}
\end{equation*}
If $\epsilon=0$, let $\delta=1$, and choose $\sigma_B^0\geq C\frac{(\beta^+)^2}{\beta^-}$. 
If $\epsilon=-1$, let $\delta=\frac{1}{2}$, and choose $\sigma_B^0\geq C\frac{(\beta^+)^2}{\beta^-}$.
\end{proof}

The next lemma establishes the continuity of the bilinear form  $a_h(\cdot,\cdot)$ with respect to the energy norm $\|\cdot\|_h$.

\begin{lemma}\label{lemma4}
For $a_h(\cdot,\cdot)$ defined in \eqref{3.2}, there exists a constant $C$, independent of the mesh size, the relative position of the interface, and the diffusion coefficients $\beta^{\pm}$, such that
\begin{equation}
a_h(w_h,v_h)\leq C\frac{\beta^+}{\sqrt{\beta^-}}\|w_h\|_{h}\|v_h\|_{h},\quad\forall w_h,v_h\in S^m_h(\Omega).\label{40}
\end{equation}
\end{lemma}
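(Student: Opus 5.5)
The plan is to bound each of the four terms of $a_h(w_h,v_h)$ in \eqref{3.2} separately and then combine them with the discrete Cauchy--Schwarz inequality. Only the two consistency/symmetry terms need the trace inequality of Lemma~\ref{lemma2}.

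\textbf{Volume and penalty terms.} For the volume term, the Cauchy--Schwarz inequality on each $K$ and then over $\mathcal{T}_h$ gives
\[
\sum_{K\in\mathcal{T}_h}\int_K\beta\nabla w_h\cdot\nabla v_h\,dX\le\Big(\sum_{K}\|\sqrt{\beta}\nabla w_h\|^2_{L^2(K)}\Big)^{1/2}\Big(\sum_{K}\|\sqrt{\beta}\nabla v_h\|^2_{L^2(K)}\Big)^{1/2}\le\|w_h\|_h\|v_h\|_h .
\]
The penalty term is handled the same way, with weight $\sigma_B^0|B|^{-\alpha}$; it is also bounded by $\|w_h\|_h\|v_h\|_h$.

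\textbf{Consistency term.} For $\sum_{B\in\mathring{\mathcal{E}}_h^s}\int_B\{\beta\nabla w_h\cdot\mathbf{n}_B\}[v_h]\,ds$ I will repeat the argument in the proof of Lemma~\ref{lemma3}, now with two different functions.
\begin{enumerate}
\item Apply Cauchy--Schwarz on each edge $B$.
\item Bound the average by the one-sided traces, and each trace by Lemma~\ref{lemma2}:
\[
\|\{\beta\nabla w_h\cdot\mathbf{n}_B\}\|_{L^2(B)}\le C\frac{\beta^+}{\sqrt{\beta^-}}h^{-1/2}\big(\|\sqrt\beta\nabla w_h\|_{L^2(K_{B,1})}+\|\sqrt\beta\nabla w_h\|_{L^2(K_{B,2})}\big).
\]
\item Insert the factor $|B|^{\alpha/2}|B|^{-\alpha/2}$ and use $c_1h\le|B|\le c_2h$ to write the edge contribution as
\[
C\frac{\beta^+}{\sqrt{\beta^-}}\,h^{(\alpha-1)/2}\Big(\sum_{i=1,2}\|\sqrt\beta\nabla w_h\|^2_{L^2(K_{B,i})}\Big)^{1/2}\Big(\frac{\sigma_B^0}{|B|^\alpha}\Big)^{1/2}\|[v_h]\|_{L^2(B)}\,(\sigma_B^0)^{-1/2}.
\]
\item Under the standing assumptions $\alpha\ge1$, $0<h\le1$, the factor $h^{(\alpha-1)/2}$ is at most $1$. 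The factor $(\sigma_B^0)^{-1/2}$ is bounded once $\sigma_B^0$ is bounded below by a fixed positive constant; I will assume this, as in Lemma~\ref{lemma3}.
\item Sum over $B$ with the discrete Cauchy--Schwarz inequality. Each element has at most four edges, so each $K\in\mathcal{T}_h^s$ is counted at most four times.
\end{enumerate}
This gives the bound $C\frac{\beta^+}{\sqrt{\beta^-}}\|w_h\|_h\|v_h\|_h$. The $\epsilon$-term is identical with $w_h$ and $v_h$ swapped, and $|\epsilon|\le1$.

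\textbf{Combining.} Adding the four bounds gives $a_h(w_h,v_h)\le C\big(2+\frac{\beta^+}{\sqrt{\beta^-}}\big)\|w_h\|_h\|v_h\|_h$. Since $\beta^-\le\beta^+$, we have $\frac{\beta^+}{\sqrt{\beta^-}}\ge\sqrt{\beta^+}$. To absorb the constant $2$ into $C\frac{\beta^+}{\sqrt{\beta^-}}$, I will normalize the coefficients (or assume $\beta^+\ge1$), or equivalently let $C$ depend on a lower bound for $\beta^+$.

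\textbf{Main obstacle.} The only delicate point is the bookkeeping of the powers of $h$ and the dependence on $\sigma_B^0$ in the consistency terms. The restriction $\alpha\ge1$ is exactly what prevents a negative power of $h$. The statement's claim that $C$ is independent of $\beta^\pm$ relies on Lemma~\ref{lemma2} carrying all of the coefficient dependence, which is uniform in the interface location by the cited Theorem 2.
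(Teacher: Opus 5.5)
Your proposal is correct and follows essentially the same route as the paper: split $a_h$ into its four terms, bound the volume and penalty terms by Cauchy--Schwarz, and handle the two consistency terms with the trace inequality of Lemma~\ref{lemma2}, the assumptions $\alpha\ge1$ and $0<h\le1$, and a discrete Cauchy--Schwarz over the edges. Your explicit caveats, a positive lower bound on $\sigma_B^0$ and a normalization such as $\beta^+\ge 1$ to absorb the $O(1)$ terms into $C\beta^+/\sqrt{\beta^-}$, make precise dependencies that the paper's proof silently absorbs into $C$.
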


\begin{proof}
By the triangle inequality, we have
\begin{equation*}
\begin{aligned}
|a_h(w_h,v_h)| \leq &
\left|\sum_{K \in \mathcal{T}_h} \int_K \beta \, \nabla w_h \cdot \nabla v_h \, dX\right| +\left| \sum_{B \in \mathring{\mathcal{E}}_h^s} \int_B \{ \beta \nabla w_h \cdot \mathbf{n}_B \} [v_h] \, ds\right| \\
& +\left| \epsilon \sum_{B \in \mathring{\mathcal{E}}_h^s} \int_B \{ \beta \nabla v_h \cdot \mathbf{n}_B \} [w_h] \, ds\right|  + \left|\sum_{B \in \mathring{\mathcal{E}}_h^s} \int_B\frac{\sigma_B^0 }{ |B|^\alpha} [w_h] [v_h] \, ds\right|.\label{41}
\end{aligned}
\end{equation*}
Denote each term on the right-hand side by $Q_i$, with $i=1,2,3,4$. We estimate each term separately. 

First, applying the Cauchy-Schwarz inequality yields
\begin{equation*}
\begin{aligned}
Q_1&\leq\sum_{K \in \mathcal{T}_h}\|\sqrt{\beta}\nabla w_h\|_{L^2(K)}\|\sqrt{\beta}\nabla v_h\|_{L^2(K)}\\
&\leq \bigg(\sum_{K \in \mathcal{T}_h}\|\sqrt{\beta}\nabla w_h\|^2_{L^2(K)}\bigg)^{\frac{1}{2}}\bigg(\sum_{K \in \mathcal{T}_h}\|\sqrt{\beta}\nabla v_h\|^2_{L^2(K)}\bigg)^{\frac{1}{2}}\\
&\leq  \|w_h\|_{h}\|v_h\|_{h}.
\end{aligned}
\end{equation*}
For $Q_2$,  by the trace inequality \eqref{3.6} and the Cauchy-Schwarz inequality, we have
\begin{equation*}
\begin{aligned}
Q_2
&\leq\sum_{B\in\mathring{\mathcal{E}}_h^s}\|\{\beta\nabla w_h\cdot\mathbf{n}_B\}\|_{L^2(B)}\bigg(\frac{1}{|B|^{\alpha}}\bigg)^{\frac{1}{2}-\frac{1}{2}}\|[v_h]\|_{L^2(B)}\\
&\leq\frac{C\beta^+}{\sqrt{\beta^-}}h^{\frac{\alpha}{2}-\frac{1}{2}}\sum_{B\in\mathring{\mathcal{E}}_h^s}\bigg(\|\sqrt{\beta}\nabla w_h\|^2_{L^2(K_{B,1})}+\|\sqrt{\beta}\nabla w_h\|^2_{L^2(K_{B,2})}\bigg)^{\frac{1}{2}}\bigg(\frac{1}{h^{\alpha}}\bigg)^{\frac{1}{2}}\|[v_h]\|_{L^2(B)}\\
&\leq \frac{C\beta^+}{\sqrt{\beta^-}}\sum_{B\in\mathring{\mathcal{E}}_h^s}\bigg(\|\sqrt{\beta}\nabla w_h\|^2_{L^2(K_{B,1})}+\|\sqrt{\beta}\nabla w_h\|^2_{L^2(K_{B,2})}\bigg)^{\frac{1}{2}}\bigg(\frac{1}{h^{\alpha}}\bigg)^{\frac{1}{2}}\|[v_h]\|_{L^2(B)}\\
&\leq \frac{C\beta^+}{\sqrt{\beta^-}}\bigg(\sum_{B\in\mathring{\mathcal{E}}_h^s}\frac{1}{h^{\alpha}}\|[v_h]\|^2_{L^2(B)}\bigg)^{\frac{1}{2}}       \bigg(\sum_{K\in\mathcal{T}_h^s}\|\sqrt{\beta}\nabla w_h\|^2_{L^2(K)}\bigg)^{\frac{1}{2}}\\
&\leq \frac{C\beta^+}{\sqrt{\beta^-}} \|w_h\|_{h}\|v_h\|_{h}.
\end{aligned}
\end{equation*}
Here, we have used the fact that $0<h\leq 1$ and $\alpha\geq 1$. 

By a symmetric argument, $Q_3$ satisfies
\begin{equation*}
\begin{aligned}
Q_3
\leq \frac{C\beta^+}{\sqrt{\beta^-}} \|w_h\|_{h}\|v_h\|_{h}.
\end{aligned}
\end{equation*}

Finally, for $Q_4$, the Cauchy-Schwarz inequality yields
\begin{equation*}
\begin{aligned}
|Q_4|
&\leq \bigg(\sum_{B \in \mathring{\mathcal{E}}_h^s} \int_B\frac{\sigma_B^0 }{ |h|^\alpha}|[w_h]|^2\,ds\bigg)^{\frac{1}{2}}\bigg(\sum_{B \in \mathring{\mathcal{E}}_h^s} \int_B\frac{\sigma_B^0 }{ |h|^\alpha} |[v_h]|^2\,ds\bigg)^{\frac{1}{2}} \leq \|w_h\|_{h}\|v_h\|_{h}.
\end{aligned}
\end{equation*}
Combining the above estimates, we obtain \eqref{40}.
\end{proof}

The well-posedness of the SDG method follows from the Lax--Milgram theorem together with Lemmas \ref{lemma3} and \ref{lemma4}.
\begin{theorem}\label{thm1}
Under the conditions of Lemma~\ref{lemma3}, there exists a unique solution $u_h\in S_h^m(\Omega)$ to the SDG method \eqref{3.1}-\eqref{3.2}. 
\end{theorem}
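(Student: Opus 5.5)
The plan is to apply the Lax--Milgram theorem on the finite-dimensional space $S_h^m(\Omega)$, equipped with the energy norm $\|\cdot\|_h$ of \eqref{3.3}. Lemma~\ref{lemma1} shows that $\|\cdot\|_h$ is a norm when $\sigma_B^0>0$ and $\alpha\ge0$, and these conditions are implied by the hypotheses of Lemma~\ref{lemma3}. Since $S_h^m(\Omega)$ is finite dimensional (each local space has dimension $(m+1)^2$ and there are finitely many elements), $(S_h^m(\Omega),\|\cdot\|_h)$ is automatically a Hilbert space, with inner product obtained from the symmetric bilinear form defining $\|\cdot\|_h^2$.

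The three Lax--Milgram hypotheses are then checked in order. First, coercivity $\kappa\|v_h\|_h^2\le a_h(v_h,v_h)$ is exactly Lemma~\ref{lemma3}. It holds for $\epsilon=1$ unconditionally, and for $\epsilon=0,-1$ once $\sigma_B^0\ge C(\beta^+)^2/\beta^-$. Second, boundedness of $a_h$ is Lemma~\ref{lemma4}, with constant $C\beta^+/\sqrt{\beta^-}$ independent of $h$. Third, continuity of the right-hand side $v_h\mapsto (f,v_h)$ needs a short argument. It is linear, and on a finite-dimensional space every linear functional is bounded in any norm, so $|(f,v_h)|\le C_{f,h}\|v_h\|_h$. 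If a mesh-independent bound is preferred, one can instead use $|(f,v_h)|\le\|f\|_{L^2}\|v_h\|_{L^2}$ together with a broken Poincaré--Friedrichs inequality $\|v_h\|_{L^2(\Omega)}\le C\|v_h\|_h$. For well-posedness, however, the finite-dimensional argument suffices.

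An alternative equivalent route avoids Lax--Milgram altogether. Problem \eqref{3.1} is a square linear system, so it suffices to show that the homogeneous problem has only the trivial solution. If $a_h(u_h,v_h)=0$ for all $v_h$, taking $v_h=u_h$ and applying Lemma~\ref{lemma3} gives $\|u_h\|_h=0$, hence $u_h=0$ by Lemma~\ref{lemma1}. Uniqueness then implies existence.

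The only genuinely delicate point is already resolved by the earlier lemmas: the definiteness of $\|\cdot\|_h$ on the hybrid space. This is where continuity across the non-penalized edges $\mathring{\mathcal{E}}_h\setminus\mathring{\mathcal{E}}_h^s$ and the zero boundary condition are used. Beyond that, one only has to be careful that the nonsymmetric cases $\epsilon=0,-1$ are covered by Lax--Milgram rather than by a Riesz argument, and that the parameter restrictions ($\alpha\ge1$, $h\le1$, $\sigma_B^0$ large enough) from Lemmas~\ref{lemma3}--\ref{lemma4} are carried into the statement.
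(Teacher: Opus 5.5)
Your proposal is correct and matches the paper's argument, which is a one-line appeal to Lax--Milgram with Lemma~\ref{lemma3} and Lemma~\ref{lemma4}; your alternative injectivity argument is also valid and shows that, in finite dimensions, coercivity together with the norm property of Lemma~\ref{lemma1} already gives existence and uniqueness, so Lemma~\ref{lemma4} is not needed for well-posedness. Two small slips: $\epsilon=-1$ is the symmetric case and $\epsilon=0,1$ are the nonsymmetric ones (harmless, since Lax--Milgram needs no symmetry), and for $\epsilon=1$ the positivity $\sigma_B^0>0$ required by Lemma~\ref{lemma1} is a standing assumption rather than a consequence of the ``unconditional'' hypothesis of Lemma~\ref{lemma3}.
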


\section{Error Estimation for the SDG Method}\label{sec4}
In this section, we derive optimal error estimates for the high-order SDG solution. Before proceeding, we introduce several operators.

On each interface element $K\in\mathcal{T}_h^i$, let $P_{h,K}:L^2(K)\to S_h^m(K)$ be the $L^2$ projection defined by
\begin{equation*}
(u-P_{h,K}u,v_h)=0,\quad\forall v_h\in S_h^m(K).
\end{equation*}

On each non-interface element $K\in\mathcal{T}_h^n$, let $I_{h,K}:C^0(K)\to S_h^m(K)$ be the Lagrange interpolant. 

For each element $K\in\mathcal{T}_h$, define the projection $\Pi_h:\mathcal{H}^{m+1}(\Omega,\Gamma;\beta)\rightarrow S_h^m(\Omega)$ by
\begin{equation}
\Pi_h u=
\left\{
\begin{aligned}
I_{h,K} u, & \quad \text{if } K\in \mathcal{T}_h^n, \\
P_{h,K} u, & \quad \text{if } K\in \mathcal{T}_h^i.\label{4.1}
\end{aligned} 
\right.
\end{equation}

Classical finite element estimates on a non-interface element $K\in\mathcal{T}_h^n$ give
\begin{equation}
|u-I_{h,K} u|_{H^i(K)}\leq C h^{m+1-i}|u|_{H^{m+1}(K)},\quad 0\leq i\leq m.\label{4.2}
\end{equation}
On an interface element $K\in \mathcal{T}_h^i$, it was shown in \cite{2025AdjeridLinMeghaichi} that
\begin{equation}
|u-P_{h,K} u|_{\mathcal{H}^i(K)}\leq C h^{m+1-i}\|u\|_{\mathcal{H}^{m+1}(K_F)},\quad 0\leq i\leq m,\label{4.3}
\end{equation}
where $K_F$ is the fictitious element in the tubular neighborhood of $\Gamma$.

\begin{lemma}\cite{2025AdjeridLinMeghaichi}\label{lemma5}
Let $K$ be an interface element and $K_F$ its fictitious element. Then $K_F$ intersects at most $49$ elements in $\mathcal{T}_h$.
\end{lemma}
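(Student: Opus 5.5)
The plan is a purely geometric counting argument. Any element meeting $K_F$ must lie within a bounded number of mesh cells of $K$, and there are at most $7\times 7=49$ such cells.

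\emph{Step 1: $K_F$ lies within distance about $h$ of $K$.} Recall $K_F=P_\Gamma([-h,h]\times[a_K,b_K])$. A point of $K_F$ has the form $\mathbf{g}(\xi)+\eta\mathbf{n}(\xi)$ with $|\eta|\le h$ and $\xi\in[a_K,b_K]$. By construction, $\xi_i$ is the foot parameter of the closest-point projection of the vertex $A_i$ onto $\Gamma$. Since $K$ is an interface element, $\operatorname{dist}(A_i,\Gamma)\le \operatorname{diam}(K)\le h$, so every $\mathbf{g}(\xi_i)$ lies within distance $h$ of $K$.

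\emph{Step 2: the arc $\mathbf{g}([a_K,b_K])$ stays close to $K$.} The closest-point projection $\mathbf{x}\mapsto \mathbf{g}(\xi(\mathbf{x}))$ is continuous on the tubular neighborhood and Lipschitz there, with constant $(1-h\|\kappa\|_\infty)^{-1}\le 2$ for $h$ small. Hence the image of $K$ under this projection is a connected arc of diameter at most $2h$. Because $\xi(\cdot)$ is continuous on the connected set $K$, the interval $[a_K,b_K]$ is exactly the range of $\xi$ over $K$. Every point $\mathbf{g}(\xi)$ with $\xi\in[a_K,b_K]$ is therefore within $h$ of $K$, by the projection distance bound of Step 1. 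Adding the normal offset $|\eta|\le h$ gives
\begin{equation*}
\operatorname{dist}(\mathbf{x},K)\le 2h \quad \text{for all } \mathbf{x}\in K_F.
\end{equation*}
A more careful bound gives slightly more than $2h$ but still less than $3h_{\min}$, provided the constants are tracked for a quasi-uniform Cartesian mesh.

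\emph{Step 3: count the elements.} On a Cartesian mesh, the set of points within distance $3$ cell widths of $K$ is covered by the block of cells whose row and column indices differ from those of $K$ by at most $3$. This is a $7\times 7$ block, giving $49$ elements.

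The main obstacle is the constant bookkeeping. Here $h$ is the element diameter, not the side length, so distances measured in $h$ must be converted to numbers of cells using the side lengths. This conversion only works under a shape-regularity assumption: I would assume the elements are squares or uniformly shaped rectangles, so that $h\le \sqrt{2}\,h_{\text{side}}$. I would first try to show $\operatorname{dist}(\mathbf{x},K)<3h_{\text{side}}$ using the sharper facts $\operatorname{dist}(A_i,\Gamma)\le h$ and the near-isometry of $P_\Gamma$ for small $h$. If that fails, I would defer to the precise statement in \cite{2025AdjeridLinMeghaichi}, where the constant $49$ is established.
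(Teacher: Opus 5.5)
\paragraph{Verdict.} Your argument is correct for a uniform mesh of square cells. The paper gives no proof of Lemma~\ref{lemma5}; it simply imports it from \cite{2025AdjeridLinMeghaichi}. Your direct derivation of $49=7^2$ as a three-layer count around $K$ therefore adds something: it shows exactly which mesh hypothesis the constant depends on.

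\paragraph{The core estimate is cleaner than you present it.} Take $\mathbf{y}=\mathbf{g}(\xi)+\eta\,\mathbf{n}(\xi)\in K_F$. Continuity of $\xi(\cdot)$ on the connected set $K$ gives $[a_K,b_K]\subseteq\xi(K)$. Only this inclusion follows from connectedness, not the equality you state, and only the inclusion is needed. So some $\mathbf{x}=\mathbf{g}(\xi)+\eta_{\mathbf{x}}\mathbf{n}(\xi)\in K$ lies on the same normal line as $\mathbf{y}$. Since $K\cap\Gamma\neq\emptyset$, you have $|\eta_{\mathbf{x}}|=\operatorname{dist}(\mathbf{x},\Gamma)\le\operatorname{diam}(K)\le h$. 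This uses that Frenet coordinates agree with the closest-point projection in the tube, which the paper's definition of $\xi_i$ already presumes. Hence
$\operatorname{dist}(\mathbf{y},K)\le|\eta-\eta_{\mathbf{x}}|\le 2h$ exactly. The Lipschitz bound and the arc-diameter claim are never used, and there is no ``slightly more than $2h$''.

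For squares of side $s$ one has $h=\sqrt{2}\,s$, so $2h=2\sqrt{2}\,s<3s$ strictly. This keeps $K_F$ out of the fourth layer of cells in both coordinate directions. The proof is complete at that point, and the fallback to the citation is unnecessary.

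\paragraph{The shape hypothesis.} You are right that it is genuinely needed, but ``uniformly shaped rectangles, so that $h\le\sqrt{2}\,h_{\mathrm{side}}$'' is not the right form of it. For cells with sides $a\le b$, the count in the short direction requires $2h=2\sqrt{a^2+b^2}<3a$, i.e.\ $b/a<\sqrt{5}/2$. For sufficiently elongated cells the lemma itself fails. $K_F$ has normal thickness $2h>2b$, so where $\Gamma$ runs parallel to the long sides, $K_F$ crosses roughly $2b/a$ rows. A similar problem arises on strongly non-uniform Cartesian meshes, since $h$ is the maximal diameter while layers are counted in local cell widths. So the lemma should be read with uniform square, or nearly square, cells, which is the setting of the experiments in Section~\ref{sec5}.
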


The following lemma establishes the approximation properties of the high-order HIFE space $S_h^m(\Omega)$.
\begin{lemma}\label{lemma6}
Let $u\in\mathcal{H}^{m+1}(\Omega,\Gamma;\beta)$ and let $\Pi_h u\in S_h^m(\Omega)$ be the projection defined in \eqref{4.1}, then there exists a constant $C$ such that
\begin{equation}
\left| u-\Pi_h u\right|_{\mathcal{H}^i(\Omega)}\leq Ch^{m+1-i}\|u\|_{\mathcal{H}^{m+1}(\Omega)},\quad 0\leq i \leq m.\label{4.4}
\end{equation}
\end{lemma}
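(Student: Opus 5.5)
The semi-norm $|\cdot|_{\mathcal{H}^i(\Omega)}$ is the broken semi-norm: the sum of squares over $\Omega^\pm$. Since $\Pi_h$ is defined element by element, I will work element by element. On each element the squared semi-norm splits over $K\cap\Omega^\pm$, so
\begin{equation*}
|u-\Pi_h u|^2_{\mathcal{H}^i(\Omega)}=\sum_{K\in\mathcal{T}_h^n}|u-I_{h,K}u|^2_{H^i(K)}+\sum_{K\in\mathcal{T}_h^i}|u-P_{h,K}u|^2_{\mathcal{H}^i(K)} .
\end{equation*}
Before using this splitting I have to check that $\Pi_h u$ really lies in $S_h^m(\Omega)$. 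This is the step I expect to be the main obstacle, because the local $L^2$ projections on interface elements need not agree with the Lagrange interpolants across edges.

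By definition of $\mathcal{H}^m(\mathcal{T}_h,\Gamma;\beta)$, continuity is required only across edges $B\in\mathring{\mathcal{E}}_h\setminus\mathring{\mathcal{E}}_h^s$. Such an edge is not an edge of any interface element, so both neighbours are non-interface elements. The interpolants $I_{h,K}u$ on both sides are $Q_m$ functions determined on $B$ by the same $m+1$ Lagrange nodal values of the continuous function $u$. They therefore coincide on $B$.

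On interface elements, $P_{h,K}u\in\mathcal{V}^m_\beta(K)$. By the construction recalled in Section~\ref{sub2.2}, this function satisfies the interface conditions inside $K$, so $P_{h,K}u|_K\in\mathcal{H}^m(K,\Gamma;\beta)$. Edges in $\mathring{\mathcal{E}}_h^s$ may carry jumps, which the space allows. The homogeneous boundary condition holds on non-interface boundary elements because $I_{h,K}$ reproduces the zero nodal values. I will assume, or argue via hypothesis $\mathbf{H_1}$ and $h$ small, that interface elements do not touch $\partial\Omega$, since $\Gamma$ lies in the interior. Hence $\Pi_h u\in S_h^m(\Omega)$.

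For the first sum I apply \eqref{4.2} and sum, which gives at most $Ch^{2(m+1-i)}\sum_{K\in\mathcal{T}_h^n}|u|^2_{H^{m+1}(K)}\le Ch^{2(m+1-i)}\|u\|^2_{\mathcal{H}^{m+1}(\Omega)}$.

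For the second sum I apply \eqref{4.3}, which bounds it by $Ch^{2(m+1-i)}\sum_{K\in\mathcal{T}_h^i}\|u\|^2_{\mathcal{H}^{m+1}(K_F)}$. The fictitious elements overlap, so I cover each $K_F$ by the mesh elements it meets and write
\begin{equation*}
\|u\|^2_{\mathcal{H}^{m+1}(K_F)}\le\sum_{K'\in\mathcal{T}_h,\,K'\cap K_F\neq\emptyset}\|u\|^2_{\mathcal{H}^{m+1}(K')}.
\end{equation*}
To sum over $K$ I need a bounded reverse count: how many $K_F$ can meet a fixed $K'$. Lemma~\ref{lemma5} bounds the number of elements each $K_F$ meets by $49$. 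Because $K_F\subset N_\Gamma(h)$ has diameter $O(h)$, any $K$ whose $K_F$ meets $K'$ lies within $O(h)$ of $K'$. On a quasi-uniform Cartesian mesh this allows only a bounded number of such $K$. An equivalent way to say this is that the relation ``$K_F$ meets $K'$'' is symmetric up to a fixed neighbourhood. Consequently
\begin{equation*}
\sum_{K\in\mathcal{T}_h^i}\|u\|^2_{\mathcal{H}^{m+1}(K_F)}\le C\|u\|^2_{\mathcal{H}^{m+1}(\Omega)} .
\end{equation*}

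Adding the two bounds and taking square roots gives \eqref{4.4}. The constant $C$ depends on $m$, $\Gamma$ and the mesh regularity, but not on $h$.
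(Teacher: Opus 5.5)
Your proof is correct and follows the same route as the paper, which simply cites \eqref{4.2}, \eqref{4.3} and Lemma~\ref{lemma5}. Your reverse count uses the fact that each $K_F$ has diameter $O(h)$, so a fixed element meets only boundedly many $K_F$; this is exactly what turns the forward bound of Lemma~\ref{lemma5} into $\sum_{K\in\mathcal{T}_h^i}\|u\|^2_{\mathcal{H}^{m+1}(K_F)}\le C\|u\|^2_{\mathcal{H}^{m+1}(\Omega)}$ and makes the paper's one-line proof rigorous. The membership check $\Pi_h u\in S_h^m(\Omega)$ is not needed, since the estimate is purely element-wise, and its boundary part tacitly assumes $u|_{\partial\Omega}=0$.
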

\begin{proof}
The result follows directly from \eqref{4.2}, \eqref{4.3}, and Lemma \ref{lemma5}.
\end{proof}

The energy-norm approximation properties of $\Pi_h u$ are summarized in the following lemma.
\begin{lemma}\label{lemma7}
Let $u\in\mathcal{H}^{m+1}(\Omega,\Gamma;\beta)$ and let $\Pi_h u$ be the projection defined in \eqref{4.1}, then
\begin{equation}
\left\| u-\Pi_h u\right\|_{h}\leq C\frac{\beta^+}{\sqrt{\beta^-}}h^{m}\|u\|_{\mathcal{H}^{m+1}(\Omega)}.\label{4.5}
\end{equation}
\end{lemma}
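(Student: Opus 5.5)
Write $e=u-\Pi_h u$ and split $\|e\|_h^2$ according to \eqref{3.3} into the volume term $\sum_K\int_K\beta|\nabla e|^2\,dX$ and the penalty term $\sum_{B\in\mathring{\mathcal{E}}_h^s}\sigma_B^0|B|^{-\alpha}\|[e]\|^2_{L^2(B)}$. The volume term is bounded directly by Lemma~\ref{lemma6} with $i=1$: since $\beta\le\beta^+$ it is at most $\beta^+ C h^{2m}\|u\|^2_{\mathcal{H}^{m+1}(\Omega)}$. Because $\beta^-\le\beta^+$ we have $\sqrt{\beta^+}\le \beta^+/\sqrt{\beta^-}$, so this is absorbed into the right-hand side of \eqref{4.5}.

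For the penalty term, note that $u\in\mathcal{H}^{m+1}(\Omega,\Gamma;\beta)$ is continuous across every interior edge, so $[e]_B=-[\Pi_h u]_B$ and $\|[e]\|_{L^2(B)}\le \|e|_{K_{B,1}}\|_{L^2(B)}+\|e|_{K_{B,2}}\|_{L^2(B)}$. On each neighbouring element $K$ (interface or not, piecewise on $K^\pm$ if $K$ is an interface element) I would apply the standard scaled trace inequality $\|w\|^2_{L^2(\partial K)}\le C\big(h^{-1}\|w\|^2_{L^2(K)}+h|w|^2_{\mathcal{H}^1(K)}\big)$. It holds on $K^\pm$ because, by $\mathbf{H_1}$–$\mathbf{H_2}$ and a $C^2$ interface, these are Lipschitz subdomains with uniformly controlled shape for $h$ small. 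Then \eqref{4.2} (non-interface elements) and \eqref{4.3} (interface elements) give $\|e\|^2_{L^2(\partial K)}\le Ch^{2m+1}\|u\|^2_{\mathcal{H}^{m+1}(\widetilde K)}$, where $\widetilde K=K$ or $K_F$. Since $|B|\simeq h$ and $\alpha\ge1$ with $h\le 1$, and $\alpha=1$ is the natural choice, each edge contributes $C\sigma_B^0h^{-\alpha}h^{2m+1}\|u\|^2\le C\sigma_B^0h^{2m}\|u\|^2_{\mathcal{H}^{m+1}(\widetilde K)}$. With $\sigma_B^0$ fixed at the size $C(\beta^+)^2/\beta^-$ required by Lemma~\ref{lemma3}, this produces exactly the factor $(\beta^+/\sqrt{\beta^-})^2$.

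Summing over edges, each element is counted at most four times, and by Lemma~\ref{lemma5} each fictitious element $K_F$ overlaps at most $49$ mesh elements. Hence $\sum_K\|u\|^2_{\mathcal{H}^{m+1}(\widetilde K)}\le C\|u\|^2_{\mathcal{H}^{m+1}(\Omega)}$, and taking square roots gives \eqref{4.5}.

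The main obstacle is the trace step on interface elements. The function $e$ is only piecewise smooth on $K$, and the estimate \eqref{4.3} is phrased through $K_F$. I would try first to apply the trace inequality on each subelement $K^\pm$ separately, using the uniform Lipschitz character of $K^\pm$ guaranteed by $\mathbf{H_1}$–$\mathbf{H_2}$. If that uniformity is delicate (for example, for tiny cut pieces), the fallback is to apply the trace inequality on the larger sets $K_F^\pm$, or on $K$ extended by a fixed patch, and use \eqref{4.3} there.
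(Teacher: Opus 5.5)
Your decomposition and bookkeeping match the paper's proof. The volume term comes from Lemma~\ref{lemma6} with $i=1$. The penalty term comes from a scaled trace inequality together with \eqref{4.2}, \eqref{4.3} and the overlap bound of Lemma~\ref{lemma5}, with $\sigma_B^0\simeq(\beta^+)^2/\beta^-$. You are also right to include the non-interface neighbours of interface elements; the paper's sum over $\mathcal{T}_h^i$ should strictly run over $\mathcal{T}_h^s$.

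The gap is the trace step on interface elements, which you route through the subelements $K^\pm$. Hypotheses $\mathbf{H_1}$--$\mathbf{H_2}$ do not give $K^\pm$ a uniformly controlled shape. The interface $\Gamma$ may cut off a corner piece of diameter $\delta\ll h$, and there the $h$-scaled trace inequality is false. For $w\equiv 1$ on $K^-$, the boundary term on $\partial K\cap\partial K^-$ is of order $\delta$, while $h^{-1}\|w\|^2_{L^2(K^-)}+h\|\nabla w\|^2_{L^2(K^-)}$ is of order $\delta^2/h$. Your fallback does not close this either. The estimate \eqref{4.3} controls $u-P_{h,K}u$ only on $K$, so moving the trace inequality to $K_F^\pm$ or to a patch leaves you with no bound for the resulting right-hand side.

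The missing observation is that no splitting is needed. Both $u$ and $P_{h,K}u$ are continuous across $\Gamma\cap K$: $[u]_\Gamma=0$, and the Frenet IFE functions satisfy $[\hat\phi]=0$ exactly (the first condition in \eqref{2.4}). Hence $e|_K\in H^1(K)$, and its distributional gradient coincides with the piecewise one. The ordinary scaled trace inequality on the rectangle $K$ then applies, with a constant independent of how $\Gamma$ cuts $K$, and \eqref{4.3} with $i=0,1$ gives
\[
\|e\|^2_{L^2(\partial K)}\le C\big(h^{-1}\|e\|^2_{L^2(K)}+h\,|e|^2_{\mathcal{H}^1(K)}\big)\le Ch^{2m+1}\|u\|^2_{\mathcal{H}^{m+1}(K_F)}.
\]
This is exactly the paper's step (``since $u|_K\in H^1(K)$ \dots the Sobolev trace theorem'').

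One smaller correction concerns $\alpha$. The bound $h^{-\alpha}h^{2m+1}\le h^{2m}$ for $h\le1$ needs $\alpha\le 1$, not $\alpha\ge1$; for $\alpha>1$ the inequality reverses. So the estimate really requires $\alpha=1$, as assumed in Theorem~\ref{thm2}.
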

\begin{proof}
According to Lemma \ref{lemma6}, we have
\begin{equation}
\sum_{K\in\mathcal{T}_h}\|\sqrt{\beta}\nabla (u-\Pi_h u)\|^2_{L^2(K)}\leq C\beta^+h^{2m}\|u\|^2_{\mathcal{H}^{m+1}(\Omega)}.\label{4.6}
\end{equation}
Since $u|_{K}\in H^1(K)$ for every element $K$, the Sobolev trace theorem together with Lemma \ref{lemma6} yields
\begin{equation}
\begin{aligned}
\sum_{B\in\mathring{\mathcal{E}}_h^s}\frac{\sigma_B^0}{h^\alpha}\|[u-\Pi_h u]_B\|^2_{L^2(B)}&\leq 2\frac{\sigma_B^0}{h^\alpha}\sum_{K\in\mathcal{T}_h^i}\| (u-\Pi_h u)|_{K}\|^2_{L^2(\partial K)}\\
&\leq C\frac{\sigma_B^0}{h^\alpha}\sum_{K\in\mathcal{T}_h^i}\Big(h^{-1}\|u-\Pi_h u\|^2_{L^2(K)} +h\|\nabla (u-\Pi_h u)\|^2_{L^2(K)}  \Big)\\
&\leq C\sigma_B^0h^{2m}\|u\|^2_{\mathcal{H}^{m+1}(\Omega)}.\label{4.7}
\end{aligned}
\end{equation}
Combining \eqref{4.6}, \eqref{4.7} and \eqref{3.3} gives
\begin{equation*}
\begin{aligned}
\|u-\Pi_h u\|^2_{h}&\leq C\frac{(\beta^+)^2}{\beta^-}h^{2m}\|u\|^2_{\mathcal{H}^{m+1}(\Omega)},
\end{aligned}
\end{equation*}
where we have used the choice $\sigma_B^0=C\frac{(\beta^+)^2}{\beta^-}$. This proves the error bound \eqref{4.5}. 
\end{proof}

We are now ready to derive an {\it a priori} error bound for the SDG solution.

\begin{theorem}\label{thm2}
Assume that $u\in\mathcal{H}^{m+1}(\Omega,\Gamma;\beta)$ is the exact solution to the interface problem \eqref{1.1}-\eqref{1.3} and that $u_h\in S_h^m(\Omega)$ is the SDG solution computed with $\alpha=1$ on a rectangular mesh $\mathcal{T}_h$. Then there exists a constant $C$, independent of the mesh size, the relative position of the interface, and the diffusion coefficients $\beta^{\pm}$, such that 
\begin{equation}
\|u-u_h\|_{h}\leq C\frac{\beta^+}{\sqrt{\beta^-}}h^{m}\|u\|_{\mathcal{H}^{m+1}(\Omega)}.\label{56}
\end{equation}
\end{theorem}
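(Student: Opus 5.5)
The plan is the standard Strang-type argument: Galerkin orthogonality, coercivity from Lemma \ref{lemma3}, and a continuity bound for $a_h$ that allows one argument to lie outside the discrete space. First I would establish consistency. Take $v_h\in S_h^m(\Omega)$, multiply \eqref{1.1} by $v_h$, and integrate by parts on each element.

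The exact solution $u\in\mathcal{H}^{m+1}(\Omega,\Gamma;\beta)$ has three properties we need:
\begin{itemize}
\item $[u]_B=0$ on every edge $B$;
\item the flux $\beta\nabla u\cdot\mathbf{n}_B$ is single-valued on edges;
\item its contributions along $\Gamma$ cancel by \eqref{1.3}.
\end{itemize}
The test function $v_h$ is continuous across $\mathring{\mathcal{E}}_h\setminus\mathring{\mathcal{E}}_h^s$ and vanishes on $\partial\Omega$. Hence only the edges in $\mathring{\mathcal{E}}_h^s$ produce boundary terms $-\int_B\{\beta\nabla u\cdot\mathbf{n}_B\}[v_h]\,ds$. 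The symmetrizing term and the penalty term vanish because $[u]_B=0$. This gives $a_h(u,v_h)=(f,v_h)$, where $a_h$ is extended by the same formula to $\mathcal{H}^{m+1}$ functions. Consequently $a_h(u-u_h,v_h)=0$ for all $v_h\in S_h^m(\Omega)$.

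Next, split $u-u_h=(u-\Pi_h u)+(\Pi_h u-u_h)$ and set $\xi_h=\Pi_h u-u_h\in S_h^m(\Omega)$. By Lemma \ref{lemma3} and Galerkin orthogonality,
\[
\kappa\|\xi_h\|_h^2\le a_h(\xi_h,\xi_h)=a_h(u-\Pi_h u,\xi_h).
\]
Write $\eta=u-\Pi_h u$. The bilinear form has four pieces, handled as follows.
\begin{itemize}
\item The volume term and the penalty term are bounded by $\|\eta\|_h\|\xi_h\|_h$ via Cauchy--Schwarz.
\item The term $\epsilon\int_B\{\beta\nabla\xi_h\cdot\mathbf{n}_B\}[\eta]$ involves the discrete function in the flux. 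It is handled exactly as $Q_3$ in Lemma \ref{lemma4}: use the trace inequality \eqref{3.6} on $\xi_h$, and absorb $[\eta]$ into the penalty part of $\|\eta\|_h$. With $\alpha=1$ this gives $C\frac{\beta^+}{\sqrt{\beta^-}}\|\eta\|_h\|\xi_h\|_h$.
\item The remaining term $\int_B\{\beta\nabla\eta\cdot\mathbf{n}_B\}[\xi_h]$ is the main obstacle, treated below.
\end{itemize}

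The last term is the main obstacle because \eqref{3.6} does not apply to $\eta\notin S_h^m$. I would bound it by
\[
\Big(\sum_{B\in\mathring{\mathcal{E}}_h^s} h\|\{\beta\nabla\eta\cdot\mathbf{n}_B\}\|_{L^2(B)}^2\Big)^{1/2}\Big(\sum_{B\in\mathring{\mathcal{E}}_h^s} h^{-1}\|[\xi_h]\|_{L^2(B)}^2\Big)^{1/2}.
\]
The second factor is at most $(\sigma_B^0)^{-1/2}\|\xi_h\|_h$. For the first factor, apply the standard trace inequality on each element $K\in\mathcal{T}_h^s$, separately on the pieces $K^\pm$ when $K$ is an interface element:
\[
h\|\nabla\eta\|^2_{L^2(\partial K)}\le C\big(\|\nabla\eta\|^2_{L^2(K)}+h^2|\eta|^2_{\mathcal{H}^2(K)}\big).
\]
Then use Lemma \ref{lemma6} with $i=1,2$. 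Getting the $i=2$ bound requires $m\ge1$; for $m=1$ the $\mathcal{H}^2$ seminorm of $\eta$ is just that of $u$, bounded by $h^{0}\|u\|_{\mathcal{H}^{2}}$. This yields $C\beta^+h^{m}\|u\|_{\mathcal{H}^{m+1}(\Omega)}$. One caveat is that the trace inequality on cut subregions $K^\pm$ of irregular shape needs a version uniform in the interface position. I would justify it by working on $K_F$ via the Frenet map as in \cite{2025AdjeridLinMeghaichi}, or by extending $u^\pm$ and using Lemma \ref{lemma5} to control overlaps.

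Collecting the bounds and using Lemma \ref{lemma7} with $\sigma_B^0=C(\beta^+)^2/\beta^-$ gives $\|\xi_h\|_h\le C\frac{\beta^+}{\sqrt{\beta^-}}h^m\|u\|_{\mathcal{H}^{m+1}(\Omega)}$. The constant $\kappa$ is independent of $h$ and $\beta$ once $\sigma_B^0$ is chosen this way. The triangle inequality $\|u-u_h\|_h\le\|\eta\|_h+\|\xi_h\|_h$, together with Lemma \ref{lemma7}, then proves \eqref{56}.
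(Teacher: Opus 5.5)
Your proposal is correct and is essentially the paper's proof. The steps match:
\begin{itemize}
\item consistency, then coercivity applied to $\Pi_h u-u_h$;
\item a four-term split in which Lemma~\ref{lemma2} controls the flux of the discrete function, while the standard trace inequality with Lemma~\ref{lemma6} controls $\{\beta\nabla(u-\Pi_h u)\cdot\mathbf{n}_B\}$;
\item Lemma~\ref{lemma7} together with the triangle inequality to finish.
\end{itemize}
Your explicit consistency derivation and your caveat about trace inequalities on the cut pieces $K^\pm$ are, if anything, more careful than the paper, which passes over both without comment.

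There are two small slips. First, Galerkin orthogonality gives $a_h(\xi_h,\xi_h)=-a_h(u-\Pi_h u,\xi_h)$, not $+$; this is harmless once you take absolute values. Second, for $m=1$ it is false that $|u-\Pi_h u|_{\mathcal{H}^2(K)}=|u|_{\mathcal{H}^2(K)}$, because $Q_1$ contains $xy$ and Frenet IFE functions are not polynomials. The $O(h^0)$ bound on $|u-\Pi_h u|_{\mathcal{H}^2}$ that you actually need still follows from standard Bramble--Hilbert and inverse-estimate arguments. The paper has the same gap: it uses Lemma~\ref{lemma6} with $i=m+1$ at this point without comment.
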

\begin{proof}
Since $\mathcal{H}^{m+1}(\mathcal{T}_h,\Gamma;\beta)\subset \mathcal{H}^{m+1}(\Omega,\Gamma;\beta)$, taking $v=v_h\in \mathcal{H}^{m+1}(\mathcal{T}_h,\Gamma;\beta)$ in \eqref{3.1} gives
\begin{align*}
a_h(u,v_h)=(f,v_h),\quad\forall v_h\in \mathcal{H}^{m+1}(\mathcal{T}_h,\Gamma;\beta).
\end{align*}
By \eqref{3.1}, we have
\begin{align*}
a_h(u_h,v_h)=a_h(u,v_h),\quad\forall v_h\in \mathcal{H}^{m+1}(\mathcal{T}_h,\Gamma;\beta).\label{60}
\end{align*}
Subtracting $a_h(w_h,v_h)$ from both sides and using the linearity of  $a_h(\cdot,\cdot)$, we obtain
\begin{equation*}
a_h(u_h-w_h,v_h) = a_h(u-w_h,v_h), \quad \forall v_h, w_h \in S_h^m(\Omega). 
\end{equation*}
Taking $v_h = u_h - w_h$ and using the coercivity of $a_h(\cdot,\cdot)$, we obtain
\begin{equation}
\begin{aligned}
&\quad \kappa \, \|u_h-w_h\|_{h}^2 \\
&\le |a_h(u_h - w_h,u_h - w_h)| =|a_h(u - w_h,u_h - w_h)|\\
&\leq 
\left|\sum_{K \in \mathcal{T}_h} \int_K \beta \, \nabla (u - w_h) \cdot \nabla (u_h - w_h) \, dX\right|  +\left| \sum_{B \in \mathring{\mathcal{E}}_h^s} \int_B \{ \beta \nabla (u - w_h) \cdot \mathbf{n}_B \} [u_h - w_h] \, ds\right| \\
&\quad +\left| \epsilon \sum_{B \in \mathring{\mathcal{E}}_h^s} \int_B \{ \beta \nabla (u_h - w_h) \cdot \mathbf{n}_B \} [u - w_h] \, ds\right|  + \left|\sum_{B \in \mathring{\mathcal{E}}_h^s} \int_B\frac{\sigma_B^0 }{h} [u - w_h][u_h - w_h]  \, ds\right|\\
&=: Q_1 + Q_2 + Q_3 + Q_4.
\label{4.9}
\end{aligned}
\end{equation}

To bound $Q_1$, using Cauchy-Schwarz inequality and Young's inequality, we obtain
\begin{equation*}
\begin{aligned}
Q_1 &\leq 
\left(\sum_{K \in \mathcal{T}_h} \|\sqrt{\beta} \nabla (u - w_h)\|_{L^2(K)}^2\right)^{\frac{1}{2}} \left(\sum_{K \in \mathcal{T}_h} \|\sqrt{\beta}\nabla (u_h - w_h)\|_{L^2(K)}^2\right)^{\frac{1}{2}}\\
&\leq \frac{3}{2 \kappa}   \beta^+ \, \|\nabla (u - w_h)\|_{L^2(\Omega)}^2 
+ \frac{\kappa}{6} \sum_{K \in \mathcal{T}_h} \|\sqrt{\beta} \nabla (u_h - w_h)\|_{L^2(K)}^2 \\
&\leq C \beta^+\, \|\nabla (u - w_h)\|_{L^2(\Omega)}^2 + \frac{\kappa}{6} \| u_h - w_h \|_{h}^2. 
\end{aligned}
\end{equation*}
To bound $Q_2$, we have
\begin{equation*}
\begin{aligned}
Q_2 &\leq 
\frac{\kappa}{6} \sum_{B \in \mathring{\mathcal{E}}_h^s} \frac{\sigma_B^0}{ h} \| [u_h - w_h] \|_{L^2(B)}^2
+ C \sum_{B \in \mathring{\mathcal{E}}_h^s} \frac{h} {\sigma_B^0} \|\{ \beta \nabla (u - w_h) \cdot \mathbf{n}_B \}\|_{L^2(B)}^2\\
&\leq \frac{\kappa}{6} \|u_h-w_h\|_{h}^2+ C\sum_{B \in \mathring{\mathcal{E}}_h^s} \frac{h} {\sigma_B^0} \|\{ \beta \nabla (u - w_h) \cdot \mathbf{n}_B \}\|_{L^2(B)}^2. 
\end{aligned}
\end{equation*}
To bound $Q_3$, for each $B \in \mathring{\mathcal{E}}_h^s$, let $K_{B,i} \in \mathcal{T}_h$, $i=1,2$, be such that
$B =K_{B,1} \cap K_{B,2}.$
First, by the standard trace inequality on elements for $H^1$ functions, we have
\begin{equation}
\begin{aligned}
\| [u - w_h] \|_{L^2(B)}
\le& \| (u - w_h)|_{K_{B,1}} \|_{L^2(B)}
   + \| (u - w_h)|_{K_{B,2}} \|_{L^2(B)} \\
\le& C h^{-1/2} \left(\| u - w_h \|_{L^2(K_{B,1})}
+ h \| \nabla (u - w_h) \|_{L^2(K_{B,1})}\right) \\
&+ Ch^{-1/2} \left(\| u - w_h \|_{L^2(K_{B,2})}
+ h \| \nabla (u - w_h) \|_{L^2(K_{B,2})}\label{4.10}
\right).
\end{aligned}
\end{equation}
Then, applying the trace inequalities established in Lemma \ref{lemma2}, we obtain
\begin{equation}
\begin{aligned}
&\| \{ \beta \nabla (u_h - w_h) \cdot \mathbf{n}_B \} \|_{L^2(B)}\\
\leq&\frac{1}{2}\|  (\beta \nabla (u_h - w_h) \cdot \mathbf{n}_B)|_{K_{B,1}}  \|_{L^2(B)}
+\frac{1}{2}\|  (\beta \nabla (u_h - w_h) \cdot \mathbf{n}_B)|_{K_{B,2}}  \|_{L^2(B)}\\
\le& \frac{C\beta^+}{\sqrt{\beta^-}} h^{-1/2} \Big(
\| \sqrt{\beta} \nabla (u_h - w_h) \|_{L^2(K_{B,1})}
+ \| \sqrt{\beta}  \nabla (u_h - w_h) \|_{L^2(K_{B,2})}
\Big).\label{4.11}
\end{aligned}
\end{equation}
According to the triangle inequality, \eqref{4.10}, and \eqref{4.11}, we have
\begin{equation*}
\begin{aligned}
Q_3
&\le |\epsilon| \sum_{B \in \mathring{\mathcal{E}}_h^s}
\| \{ \beta \nabla (u_h - w_h) \cdot \mathbf{n}_B \} \|_{L^2(B)}
\| [u - w_h] \|_{L^2(B)}  \\
&\leq\frac{C\beta^+}{\sqrt{\beta^-}} h^{-1}\sum_{K\in \mathcal{T}_h}\Big(\|u-w_h\|_{L^2(K)}+h\|\nabla(u-w_h)\|_{L^2(K)}\Big)\\&~~~~\Big(
\| \sqrt{\beta} \nabla (u_h - w_h) \|_{L^2(K_{B,1})}
+ \| \sqrt{\beta}  \nabla (u_h - w_h) \|_{L^2(K_{B,2})}
\Big)\\
&\le \frac{C(\beta^+)^2}{\beta^-} h^{-2}( \| u - w_h \|_{L^2(\Omega)}^2
+  h^{2} \| \nabla (u - w_h) \|_{L^2(\Omega)}^2)
+ \frac{\kappa}{6} \| u_h - w_h \|_{h}^2.
\end{aligned}
\end{equation*}
To bound $Q_4$, we apply the standard trace inequality to obtain
\begin{equation}
\begin{aligned}
&\int_B \frac{\sigma_B^0}{ |B|^\alpha} [u - w_h][u - w_h] \, ds\\
\le &\frac{\sigma_B^0}{ h}
\Big(
\| (u - w_h)|_{K_{B,1}} \|_{L^2(B)}
+ \| (u - w_h)|_{K_{B,2}} \|_{L^2(B)}
\Big)^2  \\
\le& C\sigma_B^0 
 |K_{B,1}|^{-1}
\big(
\| u - w_h \|_{L^2(K_{B,1})}
+ h \| \nabla (u - w_h) \|_{L^2(K_{B,1})}
\big)^2  \\
&\qquad+C\sigma_B^0
 |K_{B,2}|^{-1}
\big(
\| u - w_h \|_{L^2(K_{B,2})}
+ h \| \nabla (u - w_h) \|_{L^2(K_{B,2})}
\big)^2\\
\le& C\sigma_B^0 h^{-2}
\Big(
\| u - w_h \|_{L^2(K_{B,1})}
+ h \| \nabla (u - w_h) \|_{L^2(K_{B,1})}\Big) \\
&~~+ C\sigma_B^0 h^{-2}
\Big(\| u - w_h \|_{L^2(K_{B,2})}
+ h \| \nabla (u - w_h) \|_{L^2(K_{B,2})}
\Big),\label{4.12}
\end{aligned}
\end{equation}
where we have used 
$|K_{B,i}| \leq  h^2$, for $i = 1,2$.
Then according to \eqref{4.12}, we get
\begin{equation*}
\begin{aligned}
Q_4
&\le \sum_{B \in \mathring{\mathcal{E}}_h^s}
\left(
\frac{\kappa}{6} \int_B \frac{\sigma_B^0}{ |B|^\alpha} [u_h - w_h][u_h - w_h] \, ds
+ \frac{3}{2\kappa} \int_B \frac{\sigma_B^0}{ |B|^\alpha} [u - w_h][u - w_h] \, ds
\right) \\
&\le \frac{\kappa}{6} \| u_h - w_h \|_{h}^2
+ \frac{3}{2\kappa}
\sum_{B \in \mathring{\mathcal{E}}_h^s}
\int_B \frac{\sigma_B^0} {|B|^\alpha} [u - w_h][u - w_h] \, ds  \\
&\le \frac{\kappa}{6} \| u_h - w_h \|_{h}^2
+ C \sigma_B^0h^{-2}\Big( \| u - w_h \|_{L^2(\Omega)}^2
+ h^{2} \| \nabla (u - w_h) \|_{L^2(\Omega)}^2\Big).
\end{aligned}
\end{equation*}
Substituting the bounds for $Q_i$, $i=1,2,3,4$, into \eqref{4.9}, we obtain
\begin{equation}
\begin{aligned}
\| u_h - w_h \|_{h}^2
\le\;&
C\beta^+ \| \nabla (u - w_h) \|_{L^2(\Omega)}^2
+ C \sum_{B \in \mathring{\mathcal{E}}_h^s}
\frac{h}{\sigma_B^0}
\| \{ \beta \nabla (u - w_h) \cdot \mathbf{n}_B \} \|_{L^2(B)}^2 \\
&+ \frac{C(\beta^+)^2}{\beta^-} h^{-2} \Big(\| u - w_h \|_{L^2(\Omega)}^2
+  h^{2} \| \nabla (u - w_h) \|_{L^2(\Omega)}^2\Big)  \\
&+ C\sigma_B^0 h^{-2} \Big(\| u - w_h \|_{L^2(\Omega)}^2
+  h^{2} \| \nabla (u - w_h) \|_{L^2(\Omega)}^2\Big).\label{4.13}
\end{aligned}
\end{equation}
Setting $w_h = \Pi_h u$ in \eqref{4.13} yields
\begin{equation}
\begin{aligned}
\| u_h -  \Pi_h u \|_{h}^2
\le\;&
C\beta^+ \| \nabla (u -  \Pi_h u) \|_{L^2(\Omega)}^2
+ C \sum_{B \in \mathring{\mathcal{E}}_h^s}
\frac{h}{\sigma_B^0}
\| \{ \beta \nabla (u -  \Pi_h u) \cdot \mathbf{n}_B \} \|_{L^2(B)}^2 \\
&+ \frac{C(\beta^+)^2}{\beta^-} h^{-2} \Big(\| u -  \Pi_h u \|_{L^2(\Omega)}^2
+  h^{2} \| \nabla (u -  \Pi_h u) \|_{L^2(\Omega)}^2\Big)  \\
&+ C \sigma_B^0h^{-2} \Big(\| u -  \Pi_h u \|_{L^2(\Omega)}^2
+  h^{2} \| \nabla (u -  \Pi_h u) \|_{L^2(\Omega)}^2\Big).\label{4.14}
\end{aligned}
\end{equation}
By the standard trace inequality and Lemma \ref{lemma6}, we have
\begin{equation*}
\begin{aligned}
&\sum_{B\in\mathring{\mathcal{E}}_h^s}\frac{h}{\sigma_B^0}\|\{\beta \nabla (u -  \Pi_h u)\cdot \mathbf{n}\}_B\|^2_{L^2(B)}\\\leq &\frac{(\beta^+)^2}{\sigma_B^0}h\sum_{K\in\mathcal{T}_h}\| (\nabla (u -  \Pi_h u)\cdot \mathbf{n})|_{K}\|^2_{L^2(\partial K)}\\
\leq &C\frac{(\beta^+)^2}{\sigma_B^0}h\sum_{K\in\mathcal{T}_h}\Big(h^{-1}\|\nabla (u -  \Pi_h u)\|^2_{L^2(K)} +h\|\nabla^2 (u -  \Pi_h u)\|^2_{L^2(K)}  \Big)\\
\leq &C\frac{(\beta^+)^2}{\sigma_B^0}h^{2m}\|u\|^2_{\mathcal{H}^{m+1}(\Omega)}.
\end{aligned}
\end{equation*}
Then by \eqref{4.14} and Lemma \ref{lemma6}, we have 
\begin{equation}
\begin{aligned}
\| u_h -  \Pi_h u \|_{h}^2
\le\;&
C\beta^+ h^{2m}\|u\|^2_{\mathcal{H}^{m+1}(\Omega)}
+ C\frac{(\beta^+)^2}{\sigma_B^0}h^{2m}\|u\|^2_{\mathcal{H}^{m+1}(\Omega)}  \\
&~+ \frac{C(\beta^+)^2}{\beta^-}  h^{2m}\|u\|^2_{\mathcal{H}^{m+1} (\Omega)}+ C \sigma_B^0 h^{2m}\|u\|^2_{\mathcal{H}^{m+1} (\Omega)}\\
\leq &\frac{C(\beta^+)^2}{\beta^-}  h^{2m}\|u\|^2_{\mathcal{H}^{m+1} (\Omega)}.\label{73}
\end{aligned}
\end{equation}
By \eqref{73} and Lemma \ref{lemma7}, we have
 \begin{equation}
 \begin{aligned}
\|u-u_h\|_{h}&=  \| u-\Pi_h u +\Pi_h u-u_h \|_{h}\\
&\leq \| u-\Pi_h u\|_{h}+\|u_h-\Pi_h u\|_{h}\leq C\frac{\beta^+}{\sqrt{\beta^-}}h^m\|u\|_{\mathcal{H}^{m+1} (\Omega)}.\label{74}
\end{aligned}
\end{equation}
\end{proof}

Using a standard duality argument, we obtain an optimal error estimate in the $L^2$ norm.
\begin{theorem}\label{thm3}
Under the conditions of Theorem \ref{thm2}, there exists a constant $C$,  independent of the mesh size, the relative position of the interface, and the diffusion coefficients $\beta^{\pm}$, such that
\begin{equation}\label{eq: L2}
\|u-u_h\|_{L^2(\Omega)}\leq C\frac{(\beta^+)^3}{(\beta^-)^{\frac{5}{2}}}h^{m+1}\|u\|_{\mathcal{H}^{m+1}(\Omega)}.
\end{equation}
\end{theorem}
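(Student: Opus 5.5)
We prove \eqref{eq: L2} by the Aubin--Nitsche duality argument, with constants tracked in $\beta^\pm$. Set $e_h=u-u_h$ and let $z$ solve the adjoint interface problem
\begin{equation*}
-\nabla\cdot(\beta\nabla z)=e_h \ \text{in } \Omega^-\cup\Omega^+,\qquad z=0 \ \text{on } \partial\Omega,\qquad [z]_\Gamma=[\beta\partial_{\mathbf n}z]_\Gamma=0 .
\end{equation*}
We assume the standard regularity estimate for elliptic interface problems with piecewise constant $\beta$ (see \cite{1998ChenJun}):
\begin{equation*}
\|z\|_{\mathcal{H}^2(\Omega)}\le C_\beta\|e_h\|_{L^2(\Omega)},\qquad C_\beta\lesssim 1/\beta^- .
\end{equation*}
Since $z\in\mathcal{H}^2(\Omega,\Gamma;\beta)$ is globally continuous and has continuous normal flux, all jump terms $[z]_B$ vanish. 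Multiplying the adjoint equation by $e_h$, integrating by parts elementwise, and using that $[\beta\nabla z\cdot\mathbf n_B]=0$ on every interior edge gives
\begin{equation*}
\|e_h\|^2_{L^2(\Omega)}=a_h^{*}(e_h,z),
\end{equation*}
where $a_h^*$ is the adjoint form. For $\epsilon=-1$ this is exactly $a_h(z,e_h)$. For $\epsilon=0,1$ the consistency term $-\epsilon\sum_B\int_B\{\beta\nabla z\cdot\mathbf n_B\}[e_h]\,ds$ produces a mismatch; for $\epsilon\ne-1$ this term is the known source of suboptimality unless it is controlled, and we will handle it (or restrict to the symmetric variant) as in \cite{2025AdjeridLinMeghaichi}.

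Next we use Galerkin orthogonality, $a_h(u-u_h,v_h)=0$ for all $v_h\in S_h^m(\Omega)$, which follows as in the proof of Theorem~\ref{thm2}. Subtracting the interpolant $\Pi_h^1 z$ of degree one, i.e.\ the projection \eqref{4.1} with $m=1$ (or $\Pi_h z$ with $m$ replaced by $1$, which is contained in $S_h^m$), we obtain
\begin{equation*}
\|e_h\|^2_{L^2(\Omega)}=a_h(e_h,\,z-\Pi_h z).
\end{equation*}
The continuity estimate cannot be applied directly because $z-\Pi_h z\notin S_h^m$. We therefore bound the four terms of $a_h$ separately, exactly as $Q_1,\dots,Q_4$ in the proof of Theorem~\ref{thm2}.
\begin{itemize}
\item \emph{Volume term.} It is bounded by $\|\sqrt\beta\nabla e_h\|\,\sqrt{\beta^+}\,\|\nabla(z-\Pi_h z)\|$.
\item \emph{Flux term $\{\beta\nabla e_h\}[z-\Pi_h z]$.} Split $e_h=(u-\Pi_h u)+(\Pi_h u-u_h)$. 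Apply the standard trace inequality to the first part and Lemma~\ref{lemma2} to the discrete part; the jumps of $z-\Pi_h z$ are handled by the trace inequality \eqref{4.10}.
\item \emph{Adjoint flux and penalty terms.} These are bounded by $\|e_h\|_h$ times $h^{1/2}$-weighted traces of $z-\Pi_h z$.
\end{itemize}
Lemma~\ref{lemma6} with $m=1$ gives $\|\nabla(z-\Pi_hz)\|\lesssim h\|z\|_{\mathcal H^2}$ and the corresponding $h^{1/2}$ trace bounds. Every term is then $\lesssim \frac{\beta^+}{\sqrt{\beta^-}}\,\|e_h\|_h\, h\,\|z\|_{\mathcal H^2}$ plus $\|u-\Pi_hu\|$-type terms of the same order. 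Theorem~\ref{thm2} then gives $\|e_h\|_h\lesssim \frac{\beta^+}{\sqrt{\beta^-}}h^m\|u\|$. Multiplying by the regularity constant, and by the extra factors $\beta^+/\beta^-$ that arise from the trace estimate of $\beta\nabla(z-\Pi_hz)$ on interface elements (Lemma~\ref{lemma2} loses $\beta^+/\sqrt{\beta^-}$), we obtain $\|e_h\|^2\lesssim \frac{(\beta^+)^3}{(\beta^-)^{5/2}}h^{m+1}\|u\|\,\|e_h\|$. Dividing by $\|e_h\|$ yields \eqref{eq: L2}.

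\textbf{Main obstacle.} The delicate step is the adjoint consistency identity together with the approximation of $z$. The $\mathcal H^2$ regularity of $z$ must be paired with an interpolant in the hybrid space whose traces approximate well on the penalized edge set $\mathring{\mathcal{E}}_h^s$; this requires the $m=1$ version of \eqref{4.3}, valid for merely $\mathcal H^2$ functions on fictitious elements. The nonsymmetric cases $\epsilon\neq-1$ are a second difficulty: there we would first try to exploit $[z]=0$ and $[e_h]$ small in the penalized norm to absorb the mismatch term $\int_B\{\beta\nabla z\}[e_h]$. This absorption yields only $h^{1/2}$ from traces, so if that fails, the result would be stated for the symmetric scheme.
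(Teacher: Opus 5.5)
Your duality skeleton matches the paper's: dual interface problem, regularity $\|z\|_{\mathcal{H}^2(\Omega)}\lesssim\|e_h\|_{L^2(\Omega)}/\beta^-$, then Theorem~\ref{thm2}. The gap is in the orthogonality step. You subtract the degree-one projection $\Pi_h^1 z$ and assert that it lies in $S_h^m(\Omega)$. That is true on non-interface elements. On interface elements, however, the local Frenet IFE spaces of different degrees are not nested: $\mathcal{V}^m_\beta(K)$ imposes the weak Laplacian conditions \eqref{2.5} for $j=0,\dots,m-2$, and $\mathcal{V}^1_\beta(K)$ imposes none of them. Take $\hat\phi$ piecewise $Q_1$ with $[\hat\phi]=[\hat\beta\hat\phi_\eta]=0$ on $\eta=0$. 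Then $\hat\phi_{\eta\eta}=\hat\phi_{\xi\xi}=0$ and $[\hat\beta\hat\phi_\xi]=(\beta^+-\beta^-)\hat\phi_\xi$, so
\[
[\hat\beta\,\mathcal{L}(\hat\phi)]=(\beta^+-\beta^-)\,J_2(0,\xi)\,\hat\phi_\xi(0,\xi),\qquad
[\hat\beta\,\partial_\eta\mathcal{L}(\hat\phi)]=(\beta^+-\beta^-)\,\partial_\eta J_2(0,\xi)\,\hat\phi_\xi(0,\xi).
\]
The first vanishes identically only when $\|\mathbf{g}'\|$ is constant. For an arc-length parametrization the second equals $-(\beta^+-\beta^-)\kappa'(\xi)\hat\phi_\xi(0,\xi)$, which is generically nonzero when the curvature varies (e.g.\ the star-shaped interface with $m\ge 3$). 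Hence $a_h(e_h,\Pi_h^1 z)=0$ is not available, and $\|e_h\|^2_{L^2(\Omega)}=a_h(e_h,z-\Pi_h^1 z)$ is unjustified. The $m=1$ estimate \eqref{4.3} does hold for $z$, but it is attached to a test function that is not admissible.

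To repair this you must subtract the degree-$m$ operator $\Pi_h z\in S_h^m(\Omega)$, as the paper does with $\Pi_h w$. You then need $|z-\Pi_h z|_{\mathcal{H}^1(\Omega)}$ and the $h^{\pm 1/2}$-weighted edge traces of $z-\Pi_h z$ to be $O(h)\|z\|_{\mathcal{H}^2(\Omega)}$ for the degree-$m$ space. Neither \eqref{4.3} nor Lemma~\ref{lemma7} states this, since both assume $\mathcal{H}^{m+1}$ regularity. The paper invokes Lemma~\ref{lemma7} at the $\mathcal{H}^2$ level in \eqref{4.19} without further argument. One route is as follows. Add to a degree-one Frenet IFE approximant a piecewise $Q_m$ correction built from terms $\eta^k q_k(\xi)$, $k\ge 2$; this leaves \eqref{2.4} intact and can be chosen to restore \eqref{2.5}. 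Then show the correction is $O(h)$ in the $H^1$ seminorm. Your closing $\beta$-count should also be derived, not matched to the target. Lemma~\ref{lemma2} does not apply to $z-\Pi_h z$, and you quote the extra factor both as $\beta^+/\beta^-$ and as $\beta^+/\sqrt{\beta^-}$.

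Your other deviations are improvements on the paper. Since $[z]=0$, one has $\|e_h\|^2_{L^2(\Omega)}=a_h(z,e_h)$ for every $\epsilon$, while
\[
a_h(e_h,z)-a_h(z,e_h)=(1+\epsilon)\sum_{B\in\mathring{\mathcal{E}}_h^s}\int_B\{\beta\nabla z\cdot\mathbf{n}_B\}[e_h]\,ds.
\]
The paper's step $a_h(\Pi_h w,u-u_h)=0$ uses orthogonality with the discrete function in the first argument, whereas \eqref{3.1} provides it in the second. That step is therefore also valid only for $\epsilon=-1$, so your restriction to the symmetric scheme costs nothing relative to the paper. Bounding the four terms separately also correctly avoids applying Lemma~\ref{lemma4}, which is proved only on $S_h^m(\Omega)$, to non-discrete arguments.
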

\begin{proof}
 Let $w\in \mathcal{H}^{2}(\Omega,\Gamma;\beta)$ be the solution of the auxiliary problem \eqref{1.1}-\eqref{1.3} with $f$ at the right-hand side replaced by $u-u_h$. Then we have
\begin{equation}
\|u-u_h\|_{L^2(\Omega)}^2=a_h(w,u-u_h).\label{4.18}
\end{equation}
Let $\Pi_h w$ be the projection defined in \eqref{4.1}. Since $\Pi_h w\in S_h^m(\Omega)$, \eqref{3.1} gives
\[a_h(\Pi_h w,u-u_h)=0
\]
 and therefore
\[a_h(w,u-u_h)=a_h(w-\Pi_h w,u-u_h).\] 
Then, by \eqref{4.18} and the continuity of $a_h(\cdot,\cdot)$, we obtain
\begin{equation*}
\|u-u_h\|_{L^2(\Omega)}^2=a_h(w-\Pi_h w,u-u_h)\leq  \frac{C\beta^+}{\sqrt{\beta^-}}\|w-\Pi_h w\|_{h}\|u-u_h\|_{h}.
\end{equation*}
According to Lemma \ref{lemma7} and the elliptic regularity
\begin{equation}
\beta^- \sum_{k=1}^{2} |w^-|_{H^k(\Omega^-)}
+
\beta^+ \sum_{k=1}^{2} |w^+|_{H^k(\Omega^+)}
\;\leq C\;
\|u-u_h\|_{L^2(\Omega)},\label{4.20}
\end{equation}
we have
\begin{equation}
\|w-\Pi_h w\|_{h}\leq C\frac{\beta^+}{\sqrt{\beta^-}}h\|w\|_{\mathcal{H}^{2}(\Omega)}\leq C\frac{\beta^+}{(\beta^-)^{\frac{3}{2}}}h\|u-u_h\|_{L^2(\Omega)}.\label{4.19}
\end{equation}
Then by Theorem \ref{thm2} and \eqref{4.19}, we obtain 
\begin{equation*}
\|u-u_h\|_{L^2(\Omega)}^2\leq \frac{C\beta^+}{\sqrt{\beta^-}}\cdot\Big(C\frac{\beta^+}{(\beta^-)^{\frac{3}{2}}}h\|u-u_h\|_{L^2(\Omega)}\Big)\cdot\Big(C\frac{\beta^+}{\sqrt{\beta^-}}h^m\|u\|_{\mathcal{H}^{m+1}(\Omega)} \Big).
\end{equation*}
Therefore,
\begin{equation*}
\|u-u_h\|_{L^2(\Omega)}\leq C\frac{(\beta^+)^3}{(\beta^-)^{\frac{5}{2}}}h^{m+1}\|u\|_{\mathcal{H}^{m+1}(\Omega)}.
\end{equation*}
\end{proof}

\begin{remark}
The dependence of $L^2$ error bound \eqref{eq: L2} on $\beta$ is likely not sharp; the numerical results in Section \ref{sec5} indicate substantially milder $\beta$-dependence in practice.
\end{remark}


\section{Numerical Examples}\label{sec5}
In this section, we present several numerical examples that demonstrate the performance of the SDG method developed in Sections \ref{sec3}-\ref{sec4}. 

We consider three examples with different features. The first example  has a radial solution on a circular interface. We use this example to quantify the DoF savings of the SDG method comparing to the DG and CG methods. We also verify the approximation properties of the new high-order HIFE spaces and compare  the convergence behavior of the SDG, DG and PPIFE solutions. The second example considers a star-shaped interface with variable curvature, which tests robustness of the SDG method under geometric complexity. 
The third example has a non-radial solution on a circular interface, aiming to test the ability of the SDG method to handle more general solutions. In all examples, the meshes are uniform Cartesian rectangular meshes with $N$ subdivisions per direction. We report errors in the $L^2$ and semi-$H^1$ norms.

We refer to the bilinear form \eqref{3.2} with $\epsilon=-1$ as symmetric (S) and the one with $\epsilon=1$ as nonsymmetric (N), so that the methods to be compared are S/N-SDG, S/N-DG, and S/N-PPIFE. We take $\alpha = 1$ in all schemes, consistent with the error analysis of Section 4, except for the N-SDG method, where we set $\alpha = 1$ when $m$ is odd and $\alpha = 3$ when $m$ is even, in agreement with the choice of $\alpha$
in the DG method, see Sections 2.8 and 2.10 of \cite{2008Riviere}.

\subsection{Radial solution on a circular interface}
\label{subsec:Radial_solution}
We take $\Omega=(-1,1)^2$ and the level-set
function $\psi(x,y)=\sqrt{x^2+y^2}-r_0$, which defines a circular
interface $\Gamma=\{\psi=0\}$ separating
$\Omega^-=\{\psi<0\}$ from $\Omega^+=\{\psi>0\}$. 
The exact solution is
\begin{equation}\label{eq: 5.1}
u(x,y)=
\begin{cases}
\displaystyle \frac{r^\nu}{\beta^-} & (x,y)\in \Omega^-,\\
\displaystyle \frac{r^\nu}{\beta^+}
+\left(\frac{1}{\beta^-}-\frac{1}{\beta^+}\right)r_0^{\nu} & (x,y)\in
\Omega^+,\\
\end{cases}
\end{equation}
where $r=\sqrt{x^2+y^2}$ and $\nu=5$. 

\subsubsection{Degrees of Freedom Test}
Our first comparison focuses on the computational advantage of the SDG method. 
 Let $DoF_D,DoF_S,DoF_C$ denote the degrees of freedom of the DG, SDG, and CG methods, respectively. A direct count yields, as $N\to\infty$,
 \begin{equation*}
\begin{aligned}
&\frac{DoF_{D}}{DoF_{S}}= \frac{(m+1)^2N^2}{(mN+1)^2+4m|\mathcal{T}_h^i|-(m-1)|\mathring{\mathcal{E}}_h^i|}= \frac{(m+1)^2N^2}{(mN+1)^2+(3m+1)|\mathcal{T}_h^i|}\to \frac{(m+1)^2}{m^2},\\
&\frac{DoF_{C}}{DoF_{S}}=  \frac{(mN+1)^2}{(mN+1)^2+(3m+1)|\mathcal{T}_h^i|}\to 1.
\end{aligned}
\end{equation*} 
where we have used the relation $|\mathcal{T}_h^i|=|\mathring{\mathcal{E}}_h^i|$ that holds for rectangular meshes with closed interface curves.  The SDG count is therefore asymptotically smaller than the DG count by a factor of $\frac{(m+1)^2}{m^2}$, while remaining asymptotically equivalent to the CG count. 
 Figure~\ref{fig3} and Table \ref{tab3}  confirm these ratios across a sequence of uniform $N\times N$ Cartesian meshes for $N \in [5, 640]$ for $m\le 4$.  

These savings matter most at low order, where DG is most expensive relative to CG, and the SDG construction captures essentially all of the CG savings without sacrificing the local flexibility of DG on interface elements.

\begin{figure}[h]
    \centering

    \begin{subfigure}{0.48\textwidth}
        \centering
        \includegraphics[width=\textwidth]{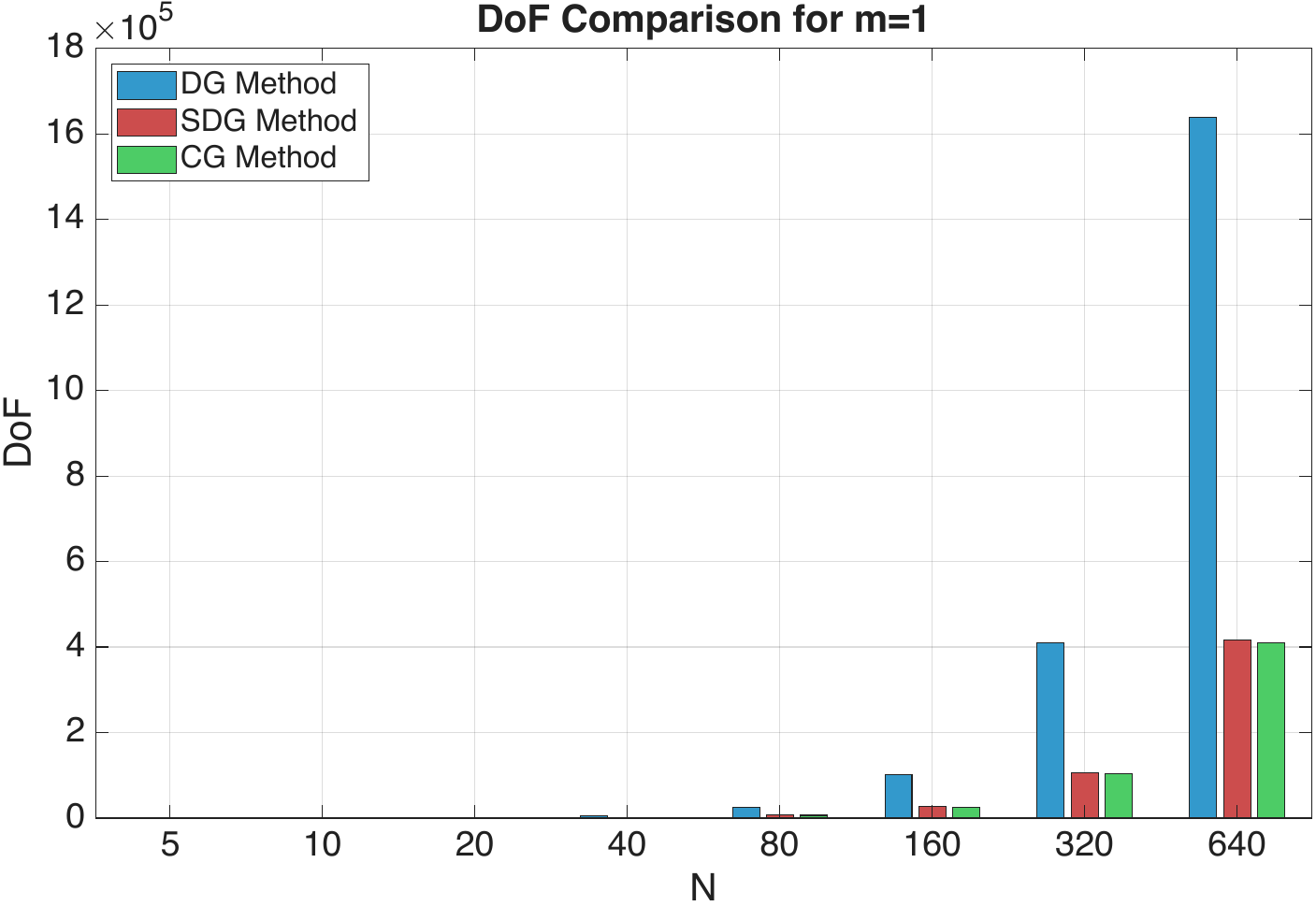}
        \caption{$m=1$}
    \end{subfigure}~~
    \begin{subfigure}{0.48\textwidth}
        \centering
        \includegraphics[width=\textwidth]{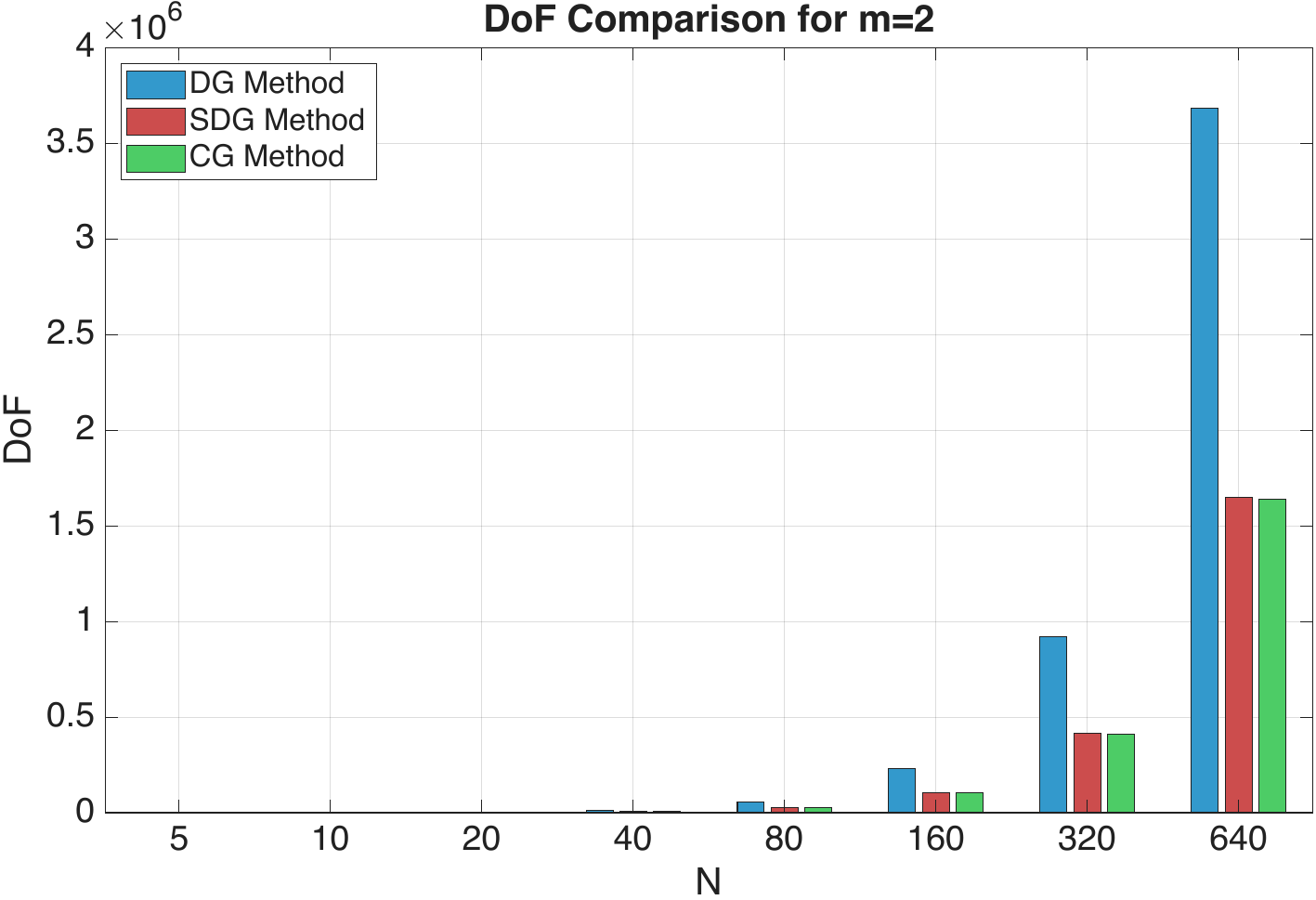}
        \caption{$m=2$}
    \end{subfigure}

    \caption{Comparison of global DoF counts for the DG, SDG, and CG methods for $m=1,2$.}
    \label{fig3}
\end{figure}

\begin{table}[h]
\centering
\begin{tabular}{cccccccccc c}
\toprule
& & & \multicolumn{2}{c}{$m=1$} & \multicolumn{2}{c}{$m=2$} & \multicolumn{2}{c}{$m=3$}& \multicolumn{2}{c}{$m=4$}\\
\cmidrule(lr){4-5} \cmidrule(lr){6-7}\cmidrule(lr){8-9}\cmidrule(lr){10-11}
$N$ & $|\mathcal{T}_h^i|$ & $|\mathring{\mathcal{E}}_h^i|$
& $\frac{DoF_{D}}{DoF_{S}}$  
& $\frac{DoF_C}{DoF_{S}}$ 
& $\frac{DoF_{D}}{DoF_{S}}$  
& $\frac{DoF_C}{DoF_{S}}$
& $\frac{DoF_{D}}{DoF_{S}}$  
& $\frac{DoF_C}{DoF_{S}}$ 
& $\frac{DoF_{D}}{DoF_{S}}$  
& $\frac{DoF_C}{DoF_{S}}$  \\
\midrule
5   & 8  & 8  & 1.4706 & 0.5294 & 1.2712 & 0.6836 & 1.1905&0.7619 & 1.1468&0.8092\\
10  & 20 &20  & 1.9900 & 0.6020 & 1.5491 & 0.7590 &1.3781 &0.8277 & 1.2880&0.8660\\
20  & 44 & 44  & 2.5932 & 0.7147 & 1.8100 & 0.8451 &1.5381 &0.8943 &1.3872 &0.9101\\
40  & 92 &92  & 3.1235 & 0.8204 & 1.9986 & 0.9106 & 1.6451 &0.9409 &1.4425&0.9348\\
80  & 188 &188 & 3.5006 & 0.8972 & 2.1148 & 0.9517 &1.7078 &0.9686 &1.4699 &0.9466\\
160 & 372 &372 & 3.7360 & 0.9457 & 2.1809 & 0.9754 &1.7424 &0.9842 &1.4833 &0.9523\\
320 & 740 &740 & 3.8641 & 0.9721 & 2.2151 & 0.9875 &1.7600 &0.9921 &1.4951 &0.9584\\
640 & 1476& 1476 & 3.9310 & 0.9858 & 2.2324 & 0.9937 &1.7689 &0.9960 & 1.5003&0.9620\\
\bottomrule
\end{tabular}
\caption{Ratios of DoF counts among DG, SDG, and CG methods for $m=1,2,3,4$.}
\label{tab3}
\end{table}

\subsubsection{Approximation capability of the high-order HIFE space}
We next verify approximation capability of the HIFE space by computing the projection error $u-\Pi_h u$ in the $L^2$ and semi-$H^1$ norms. 

Figures \ref{fig4} and \ref{fig5} show the results for $m = 1$ and 
$m = 2$, respectively, with both a moderate jump $\beta:= (\beta^-,\beta^+) =(1,10)$ and a large jump $\beta= (1,1000)$. In all cases the errors decay at the optimal rates
in the $L^2$ and the semi-$H^1$ norms in agreement with the analysis in Lemmas \ref{lemma6} and \ref{lemma7}.

\begin{figure}[!htb]
    \centering

    \begin{subfigure}{0.48\textwidth}
        \centering
        \includegraphics[width=\textwidth]{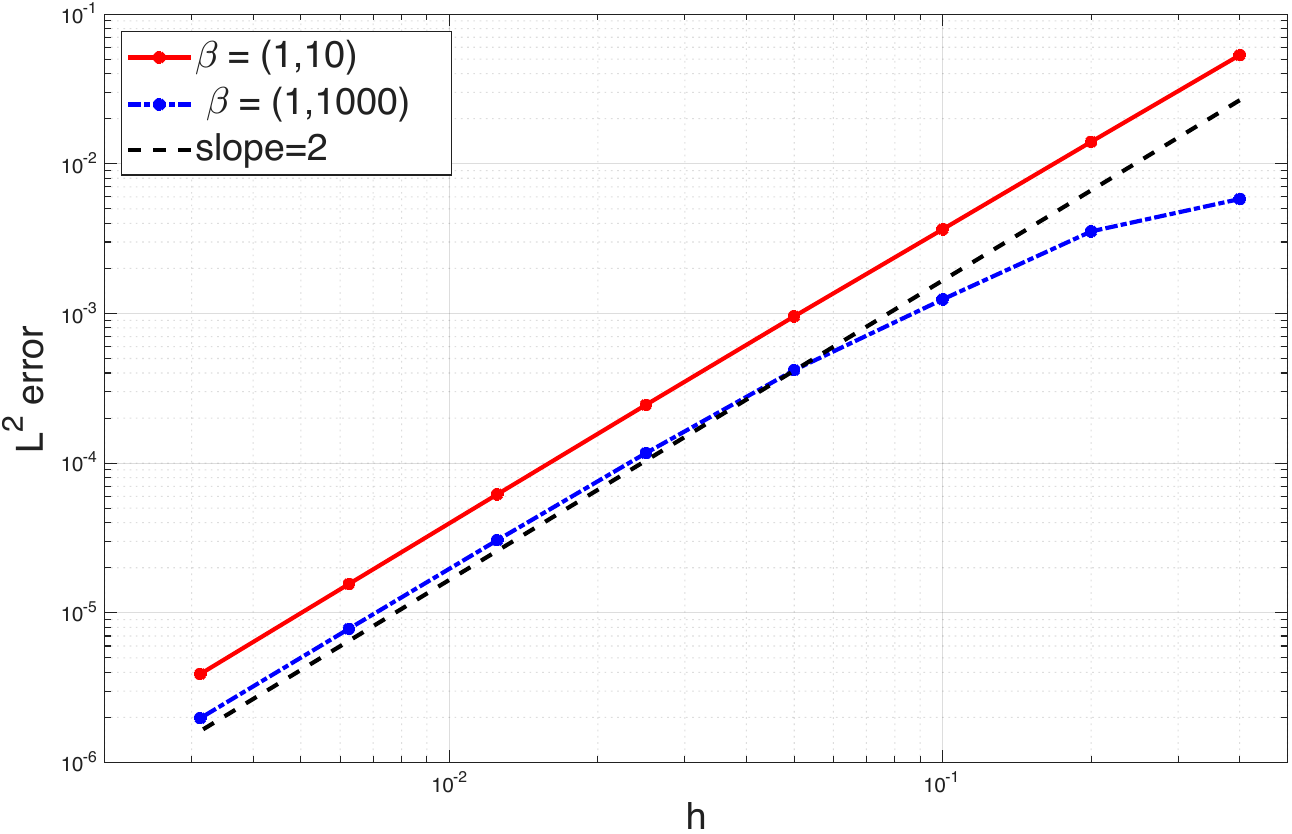}
        \caption{$L^2$ Errors }
    \end{subfigure}
    ~~
    \begin{subfigure}{0.48\textwidth}
        \centering
        \includegraphics[width=\textwidth]{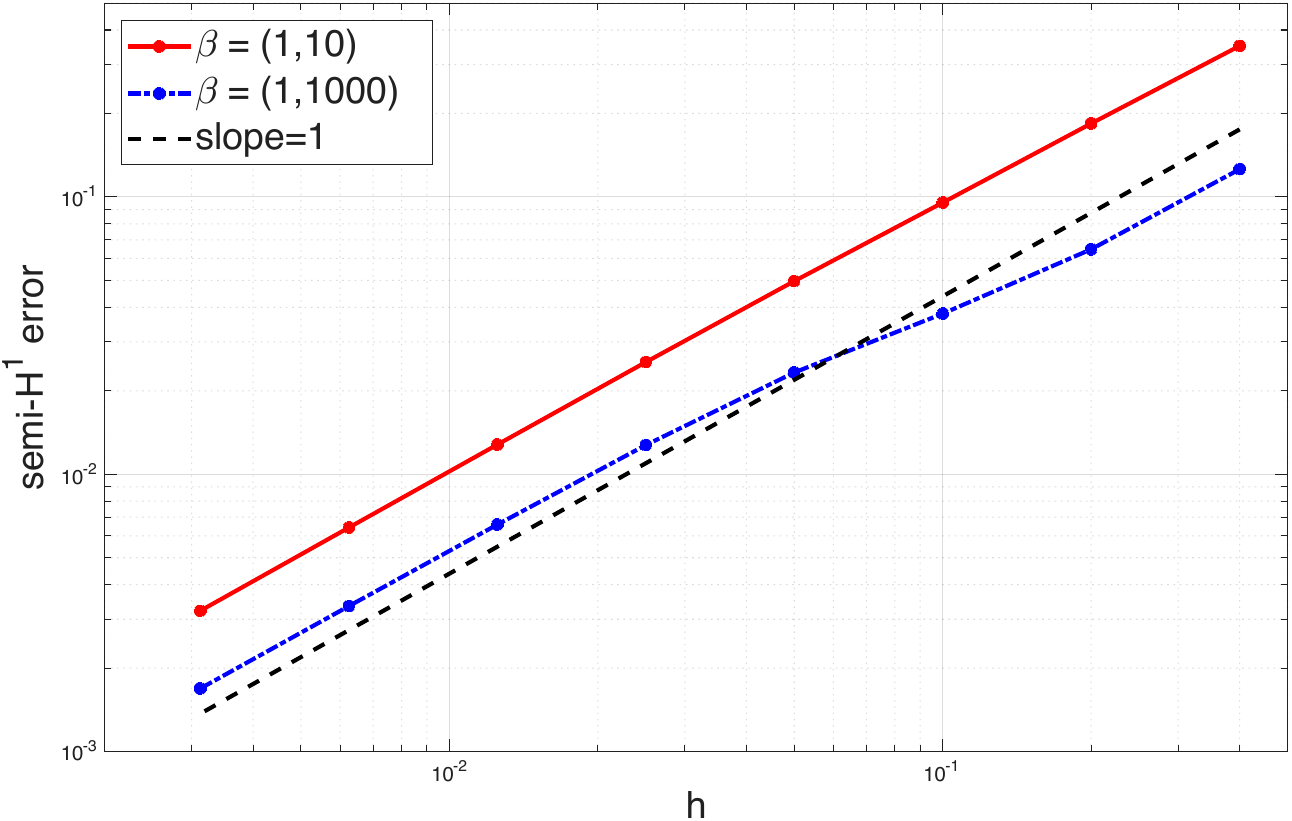}
        \caption{Semi-$H^1$ Errors}
    \end{subfigure}

    \caption{Projection errors and convergence rates in the  $L^2$ and semi-$H^1$ norms for $\beta=(1,10)$ and $\beta=(1,1000)$, $m=1$.}
    \label{fig4}
\end{figure}

\begin{figure}[!htb]
    \centering

    \begin{subfigure}{0.48\textwidth}
        \centering
        \includegraphics[width=\textwidth]{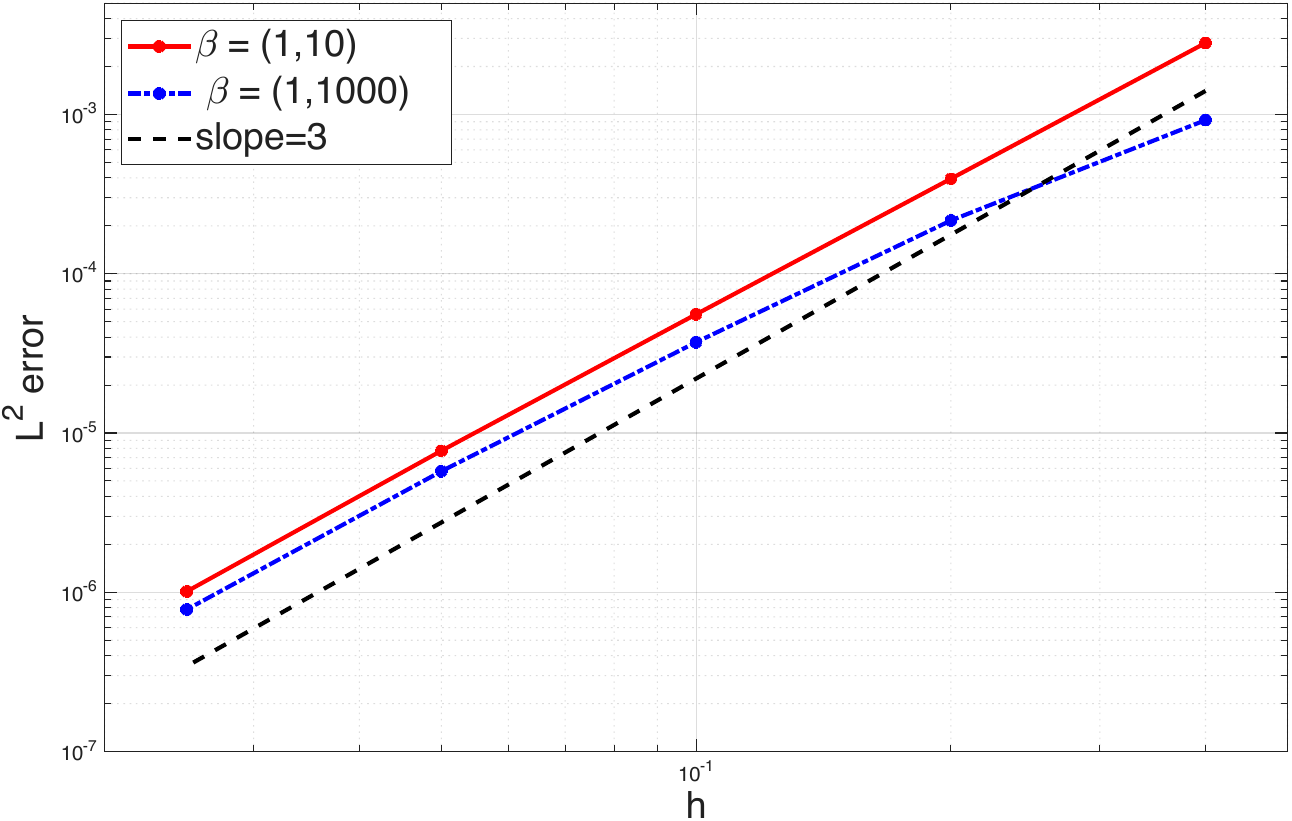}
        \caption{$L^2$ Errors }
    \end{subfigure}
    ~~
    \begin{subfigure}{0.48\textwidth}
        \centering
        \includegraphics[width=\textwidth]{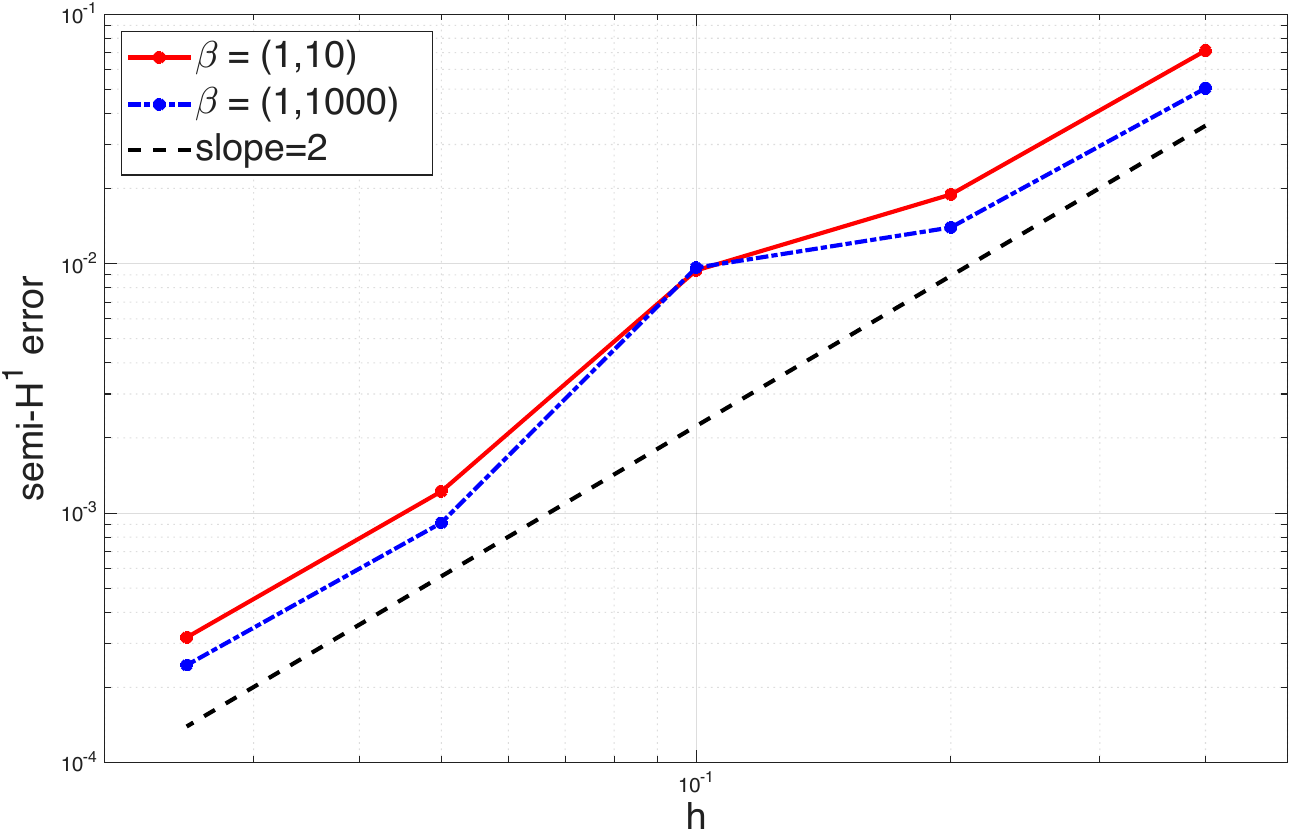}
        \caption{Semi-$H^1$ Errors}
    \end{subfigure}

    \caption{Projection errors and convergence rates in the  $L^2$ and semi-$H^1$ norms for $\beta=(1,10)$ and $\beta=(1,1000)$, $m=2$.}
    \label{fig5}
\end{figure}

\begin{figure}[!htb]
    \centering

    \begin{subfigure}{0.48\textwidth}
        \centering
        \includegraphics[width=\textwidth]{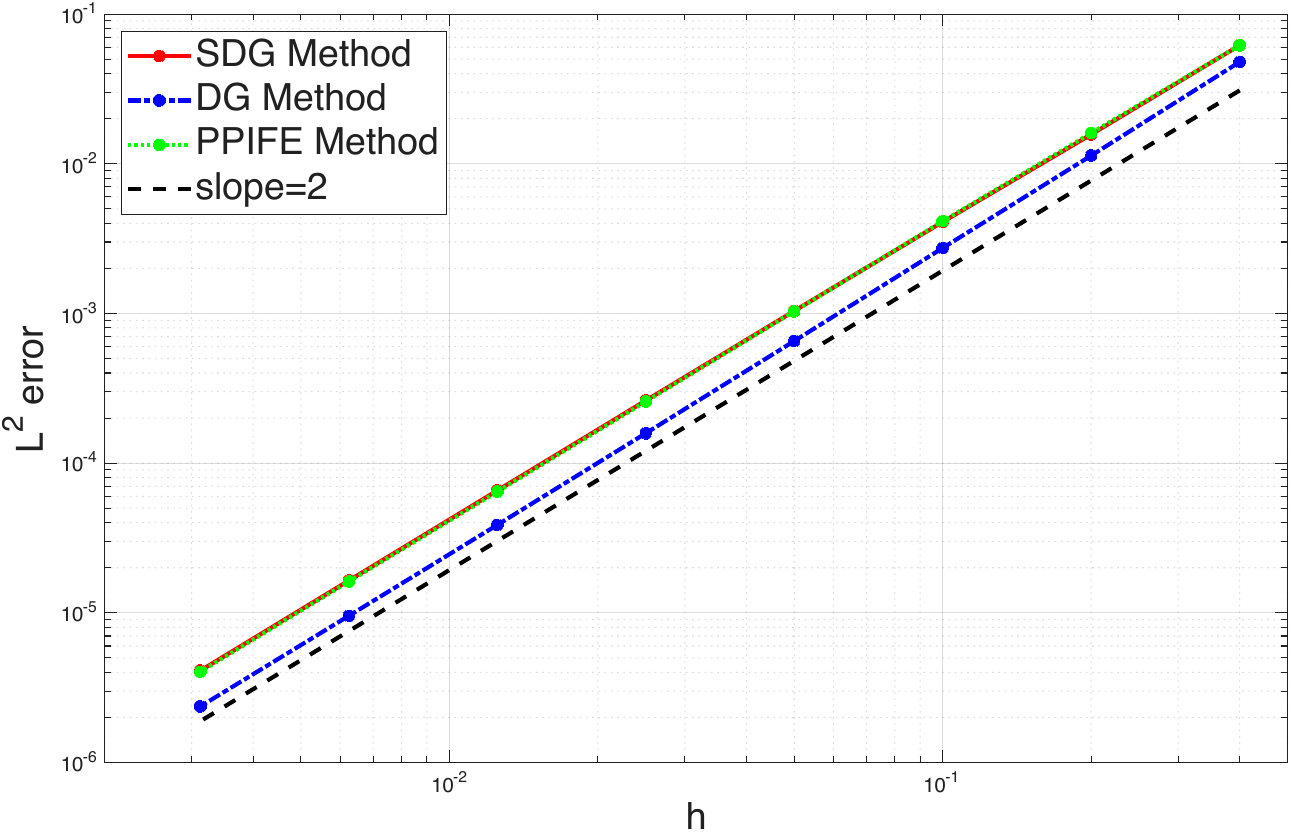}
        \caption{$L^2$ Errors }
    \end{subfigure}
    ~~
    \begin{subfigure}{0.48\textwidth}
        \centering
        \includegraphics[width=\textwidth]{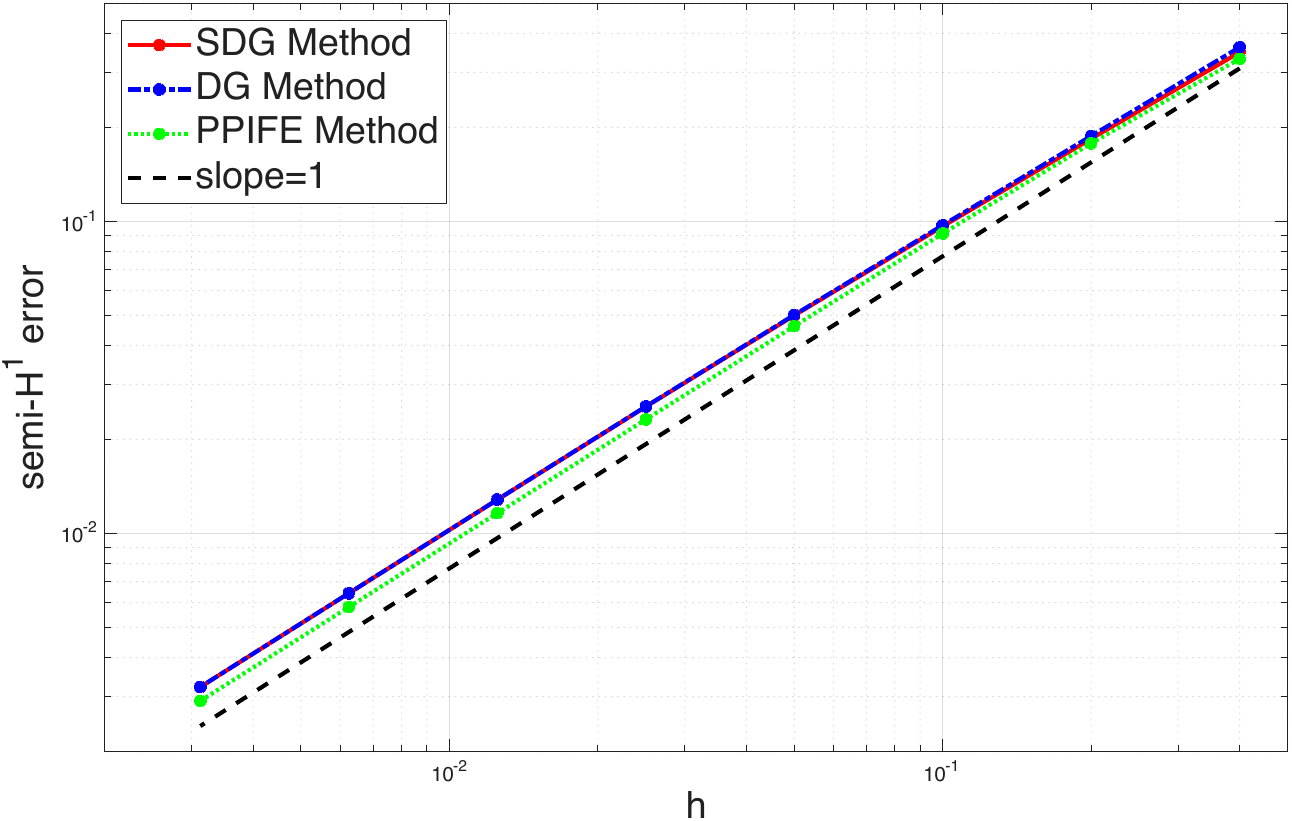}
        \caption{Semi-$H^1$ Errors}
    \end{subfigure}

    \caption{$L^2$ and semi-$H^1$ errors of the symmetric SDG,  DG, and PPIFE methods with $\beta=(1,10)$ , and $\sigma_B^0=4\max\{\beta^-, \beta^+\}$, for $m=1$.}
    \label{fig6}
\end{figure}

\begin{figure}[!htb]
    \centering

    \begin{subfigure}{0.48\textwidth}
        \centering
        \includegraphics[width=\textwidth]{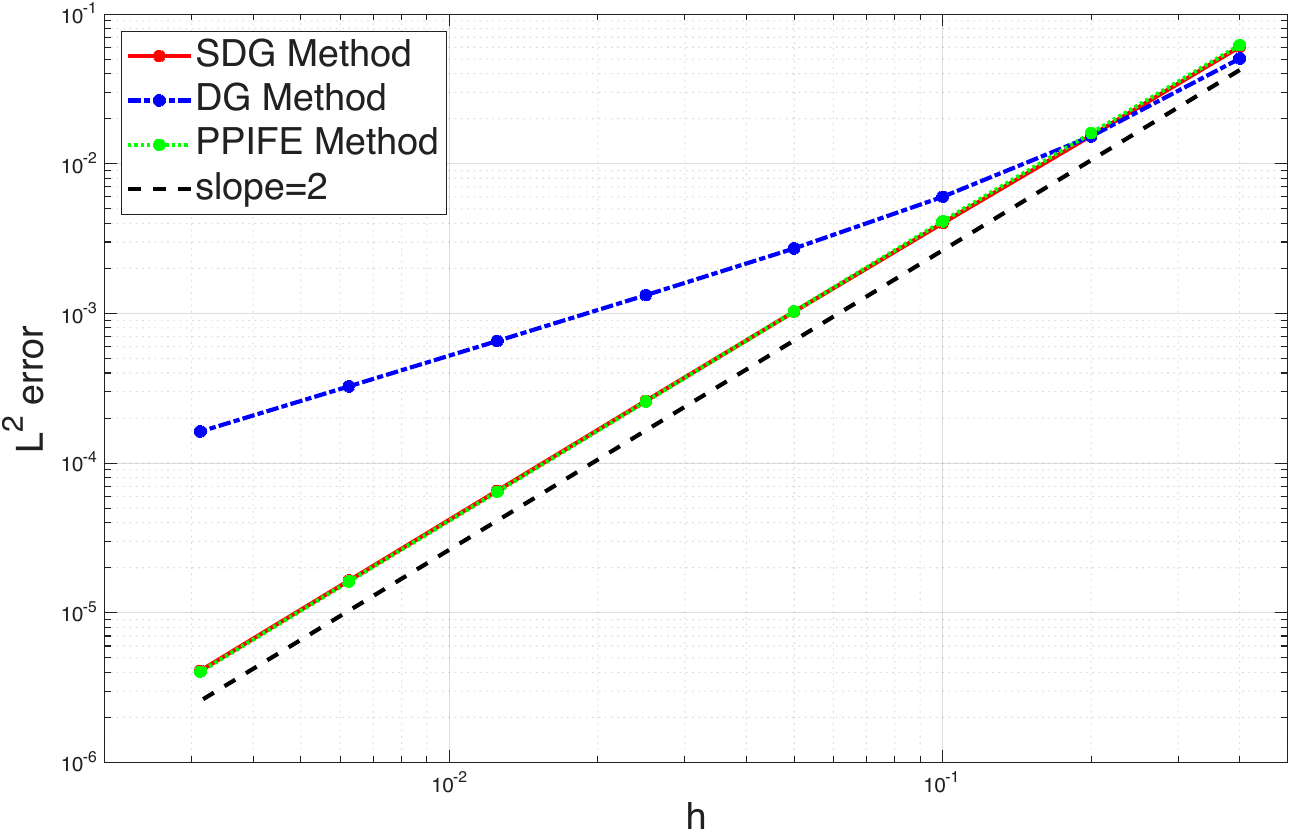}
        \caption{$L^2$ errors with $\beta=(1,10).$}
    \end{subfigure}
    ~~
    \begin{subfigure}{0.48\textwidth}
        \centering
        \includegraphics[width=\textwidth]{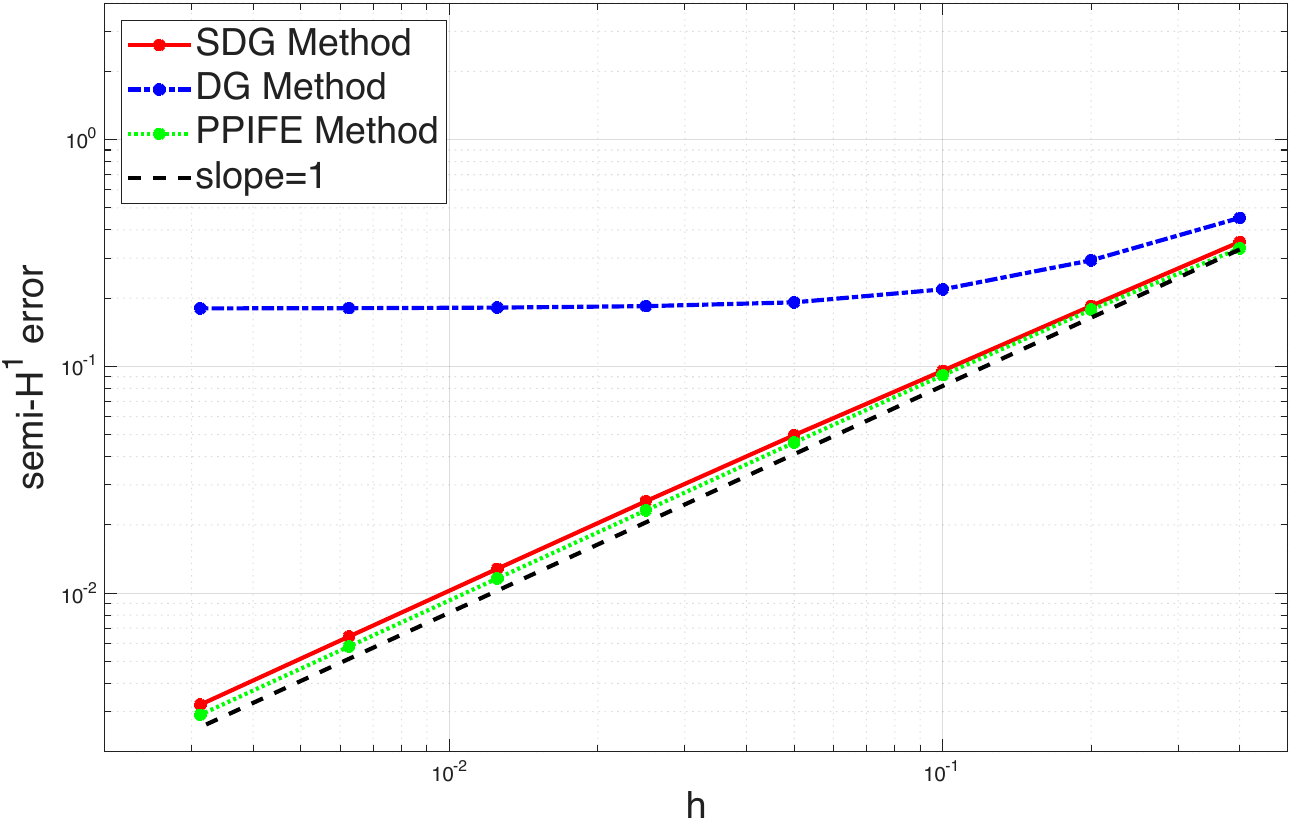}
        \caption{Semi-$H^1$ errors with $\beta=(1,10).$}
    \end{subfigure}

    \vspace{0.5em}

    \begin{subfigure}{0.48\textwidth}
        \centering
        \includegraphics[width=\textwidth]{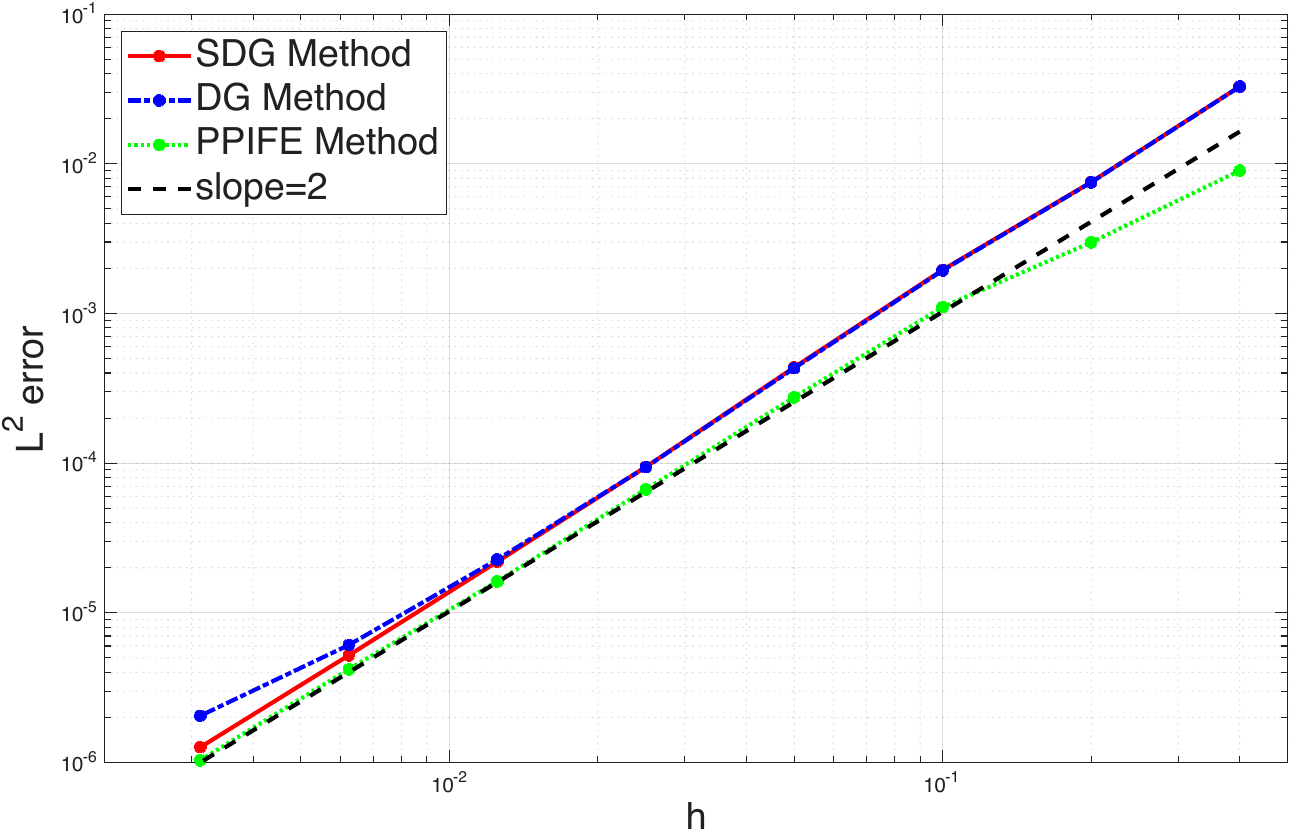}
        \caption{$L^2$ errors with $\beta=(1,1000).$}
    \end{subfigure}
    ~~
    \begin{subfigure}{0.48\textwidth}
        \centering
        \includegraphics[width=\textwidth]{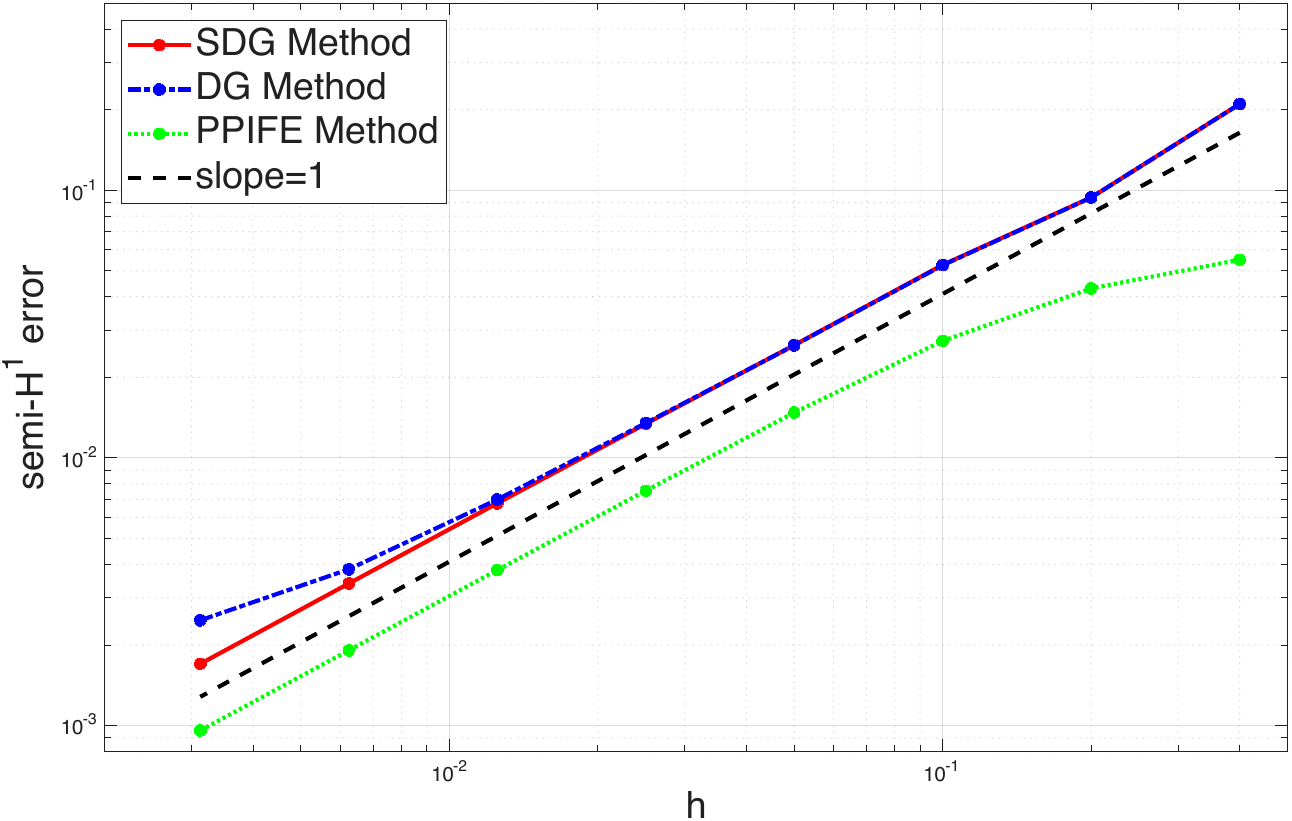}
        \caption{Semi-$H^1$ errors with $\beta=(1,1000).$}
    \end{subfigure}

    \caption{$L^2$ and semi-$H^1$ errors of the symmetric SDG, DG, and PPIFE methods with $\beta=(1,10)$ and $\beta=(1,1000)$ and $\sigma_B^0=\max(\beta^+,
    \beta^-)$ for $m=1$.}
    \label{fig7}
\end{figure} 
\subsubsection{Convergence Test}
We now compute the discrete solution errors for the six methods: S/N-SDG, S/N-DG, and S/N-PPIFE. Two penalty choices are considered:
\begin{enumerate}
     \item {\bf Large penalty}, $\sigma_B^0=4\max\{\beta^-, \beta^+\}$, following the choice in [1]. Figure \ref{fig6} shows that the S-SDG, S-DG, and S-PPIFE methods all attain the optimal rates in $L^2$ and semi-$H^1$ norms for $m = 1$ at $\beta = (1, 10)$. This confirms that the SDG method achieves the same optimal rates as the DG-IFE and PPIFE methods when the penalty is taken sufficiently large.
    \item {\bf Reduced penalty}, $\sigma_B^0=\max\{\beta^-, \beta^+\}$. The convergence behavior in this parameter setting is more revealing and is shown in Figure \ref{fig7}. For the moderate jump $\beta=(1,10)$ (panels (a)–(b)), the S-SDG and S-PPIFE methods achieve optimal rates, but the S-DG method fails to attain the optimal rate in either norm. For the large jump $\beta=(1,1000)$ (panels (c)–(d)), all three methods recover optimal rates.
\end{enumerate}



 
Figure \ref{fig8} shows that the nonsymmetric variants (N-SDG, N-DG, N-PPIFE) achieve optimal convergence in both norms for $\beta=(1,10)$ and 
$\beta=(1,1000)$ at $\sigma_B^0=\max\{\beta^-, \beta^+\}$, consistent with the unconditional stability of nonsymmetric DG-type methods. Figure \ref{fig9} confirms that the S-SDG and N-SDG methods both attain optimal convergence at $m=2$ across both jump ratios. These results validate the theoretical convergence rates of the SDG scheme across the various parameters.

\begin{remark} The difference in Figure \ref{fig7} (a)–(b) between the symmetric DG method and its SDG/PPIFE counterparts under reduced penalty is a distinctive numerical finding. The symmetric DG bilinear form penalizes jumps on every interior edge $B\in \mathring{\mathcal{E}}_h$. When $\sigma_B^0$ is reduced, the resulting stabilization becomes insufficient on the full edge set, and the coercivity degrades. In contrast, the symmetric SDG method imposes its penalty only on the edges $B \in \mathring{\mathcal{E}}_h^s$ of interface elements, and the symmetric PPIFE method only on the interface edges $B \in \mathring{\mathcal{E}}_h^i$, both are much smaller subsets compared to $\mathring{\mathcal{E}}_h$, so a smaller stabilization parameter suffices. The recovery at $\beta = (1, 1000)$ reflects the fact that the effective penalty scales with $\max\{\beta^-, \beta^+\}=1000$ and is therefore already large in absolute terms. A systematic analysis of the dependence of optimal convergence on penalty parameters for symmetric IPDG-type methods would be of independent interest and is left to future work.
\end{remark}

\begin{figure}[htb]
    \centering

    \begin{subfigure}{0.48\textwidth}
        \centering
        \includegraphics[width=\textwidth]{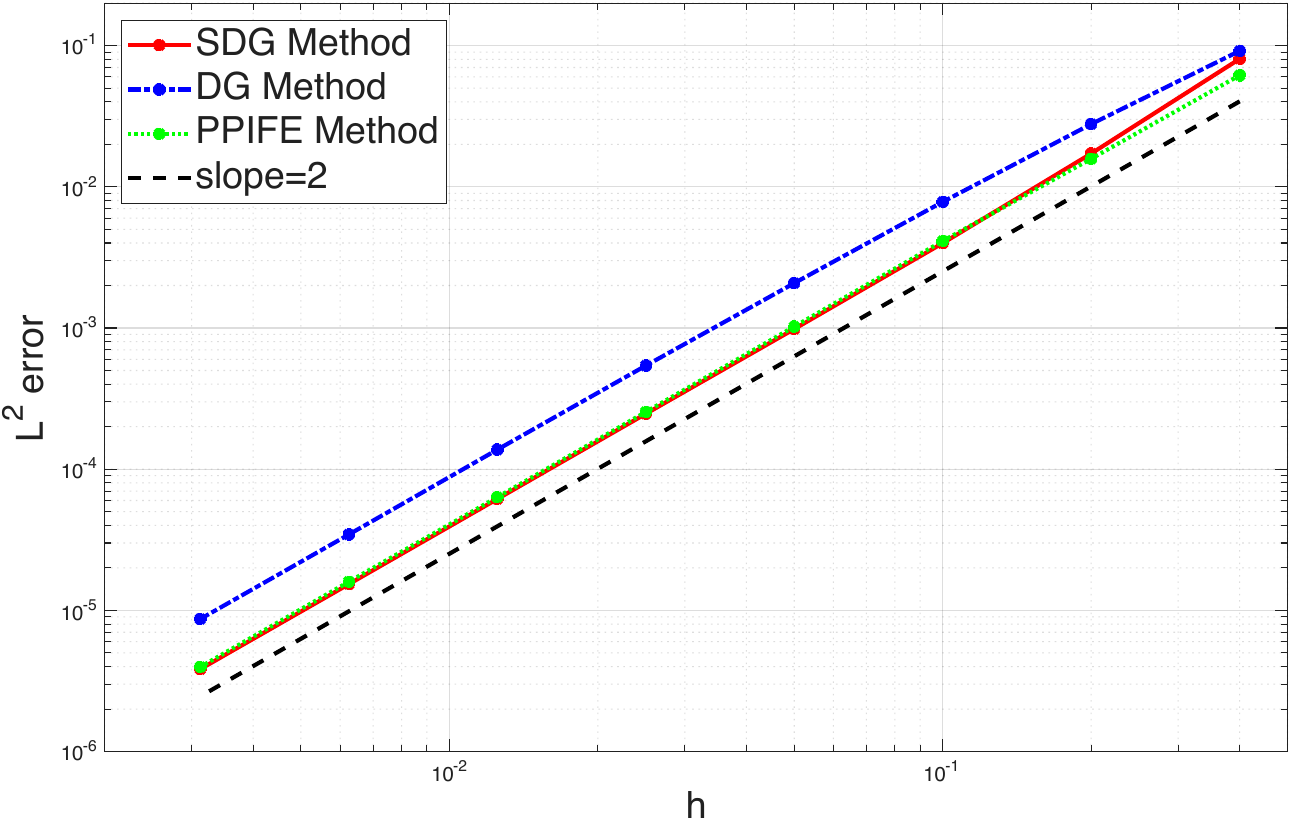}
        \caption{$L^2$ errors with $\beta=(1,10).$}
    \end{subfigure}
    ~~
    \begin{subfigure}{0.48\textwidth}
        \centering
        \includegraphics[width=\textwidth]{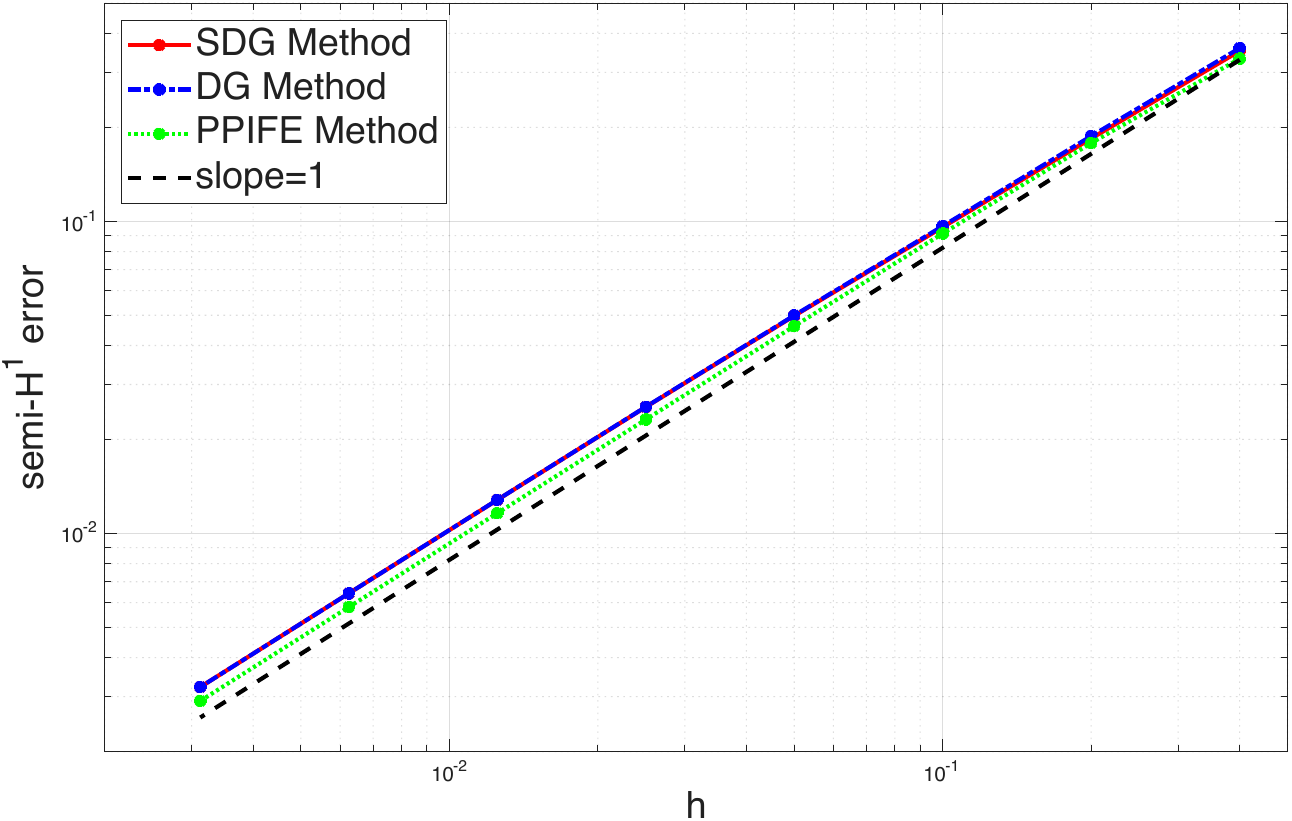}
        \caption{Semi-$H^1$ errors with $\beta=(1,10).$}
    \end{subfigure}

    \vspace{0.5em}

    \begin{subfigure}{0.48\textwidth}
        \centering
        \includegraphics[width=\textwidth]{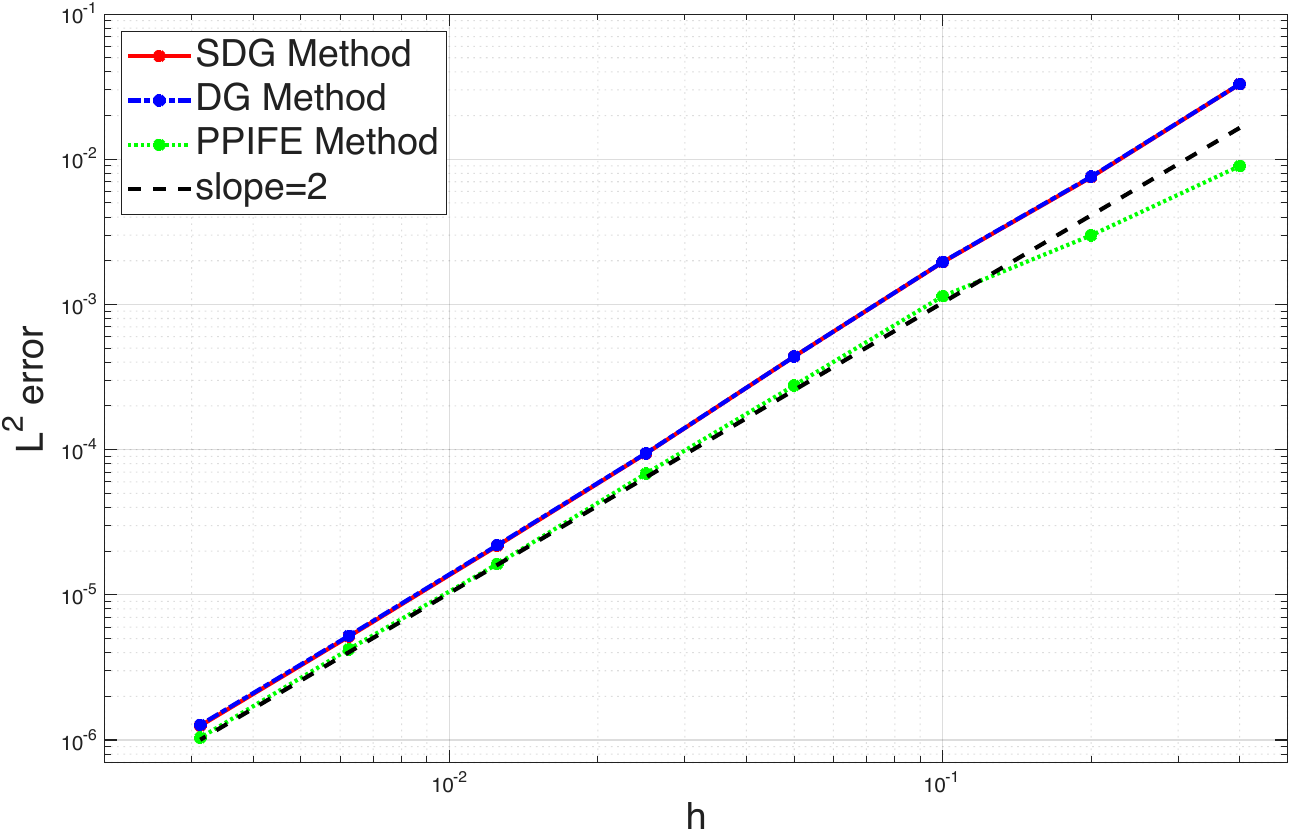}
        \caption{$L^2$ errors with $\beta=(1,1000).$}
    \end{subfigure}
    ~~
    \begin{subfigure}{0.48\textwidth}
        \centering
        \includegraphics[width=\textwidth]{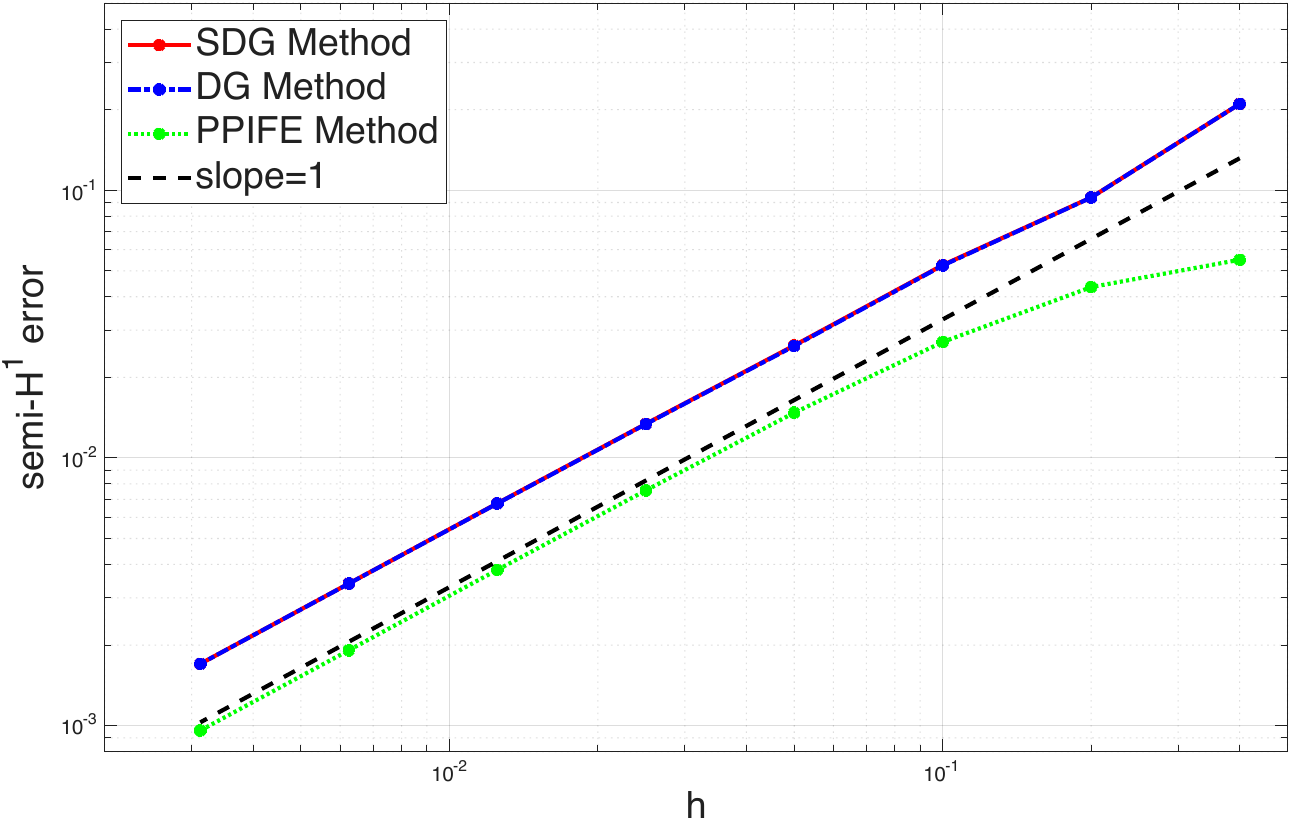}
        \caption{Semi-$H^1$ errors with $\beta=(1,1000).$}
    \end{subfigure}

    \caption{$L^2$ and semi-$H^1$ errors of the nonsymmetric SDG, DG, PPIFE methods with $\beta=(1,10)$  and $\beta=(1,1000)$ for $m=1$.}
    \label{fig8}
\end{figure} 

\begin{figure}[!htb]
    \centering

    \begin{subfigure}{0.48\textwidth}
        \centering
        \includegraphics[width=\textwidth]{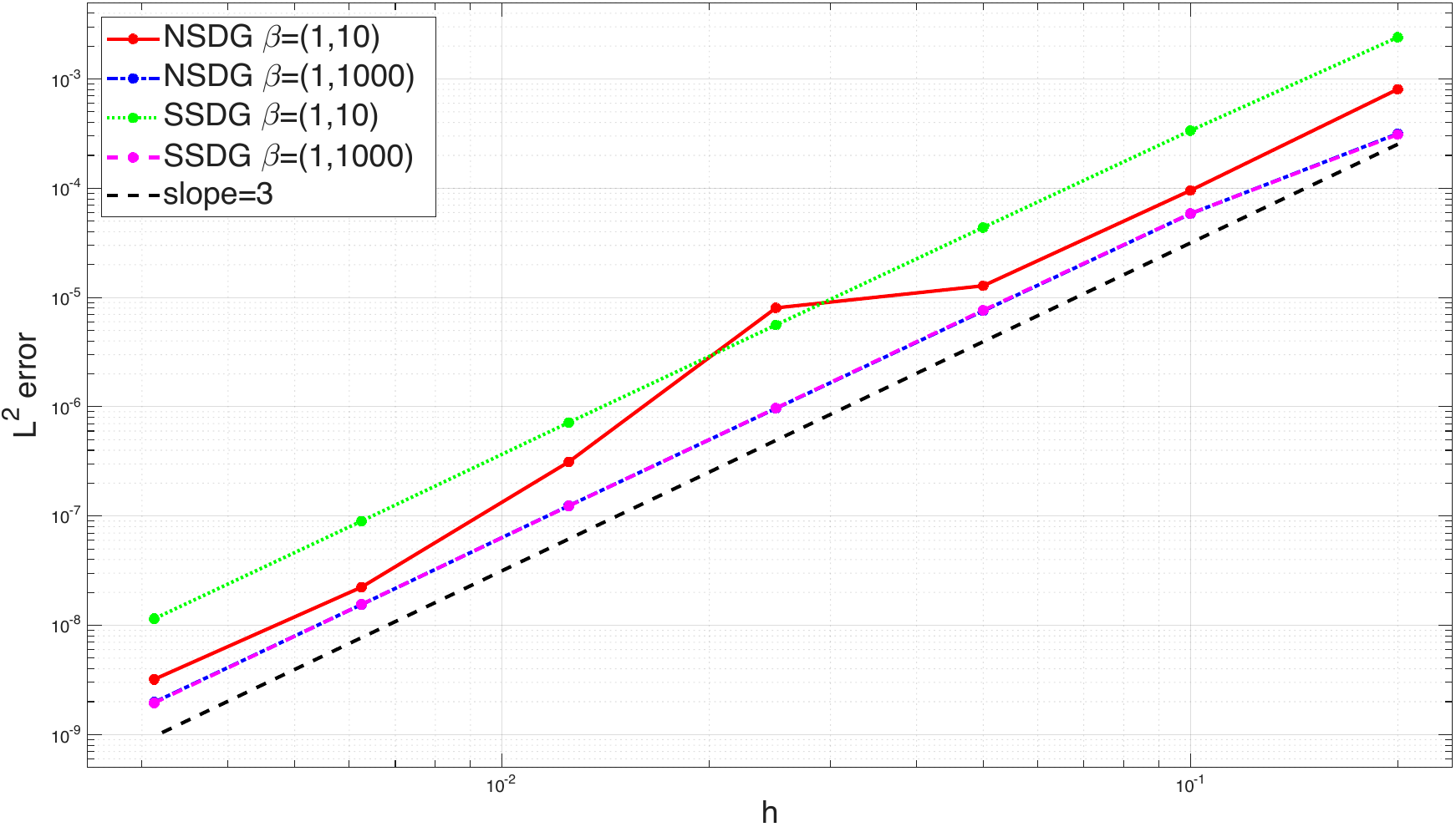}
        \caption{$L^2$ Errors }
    \end{subfigure}
    ~~
    \begin{subfigure}{0.48\textwidth}
        \centering
        \includegraphics[width=\textwidth]{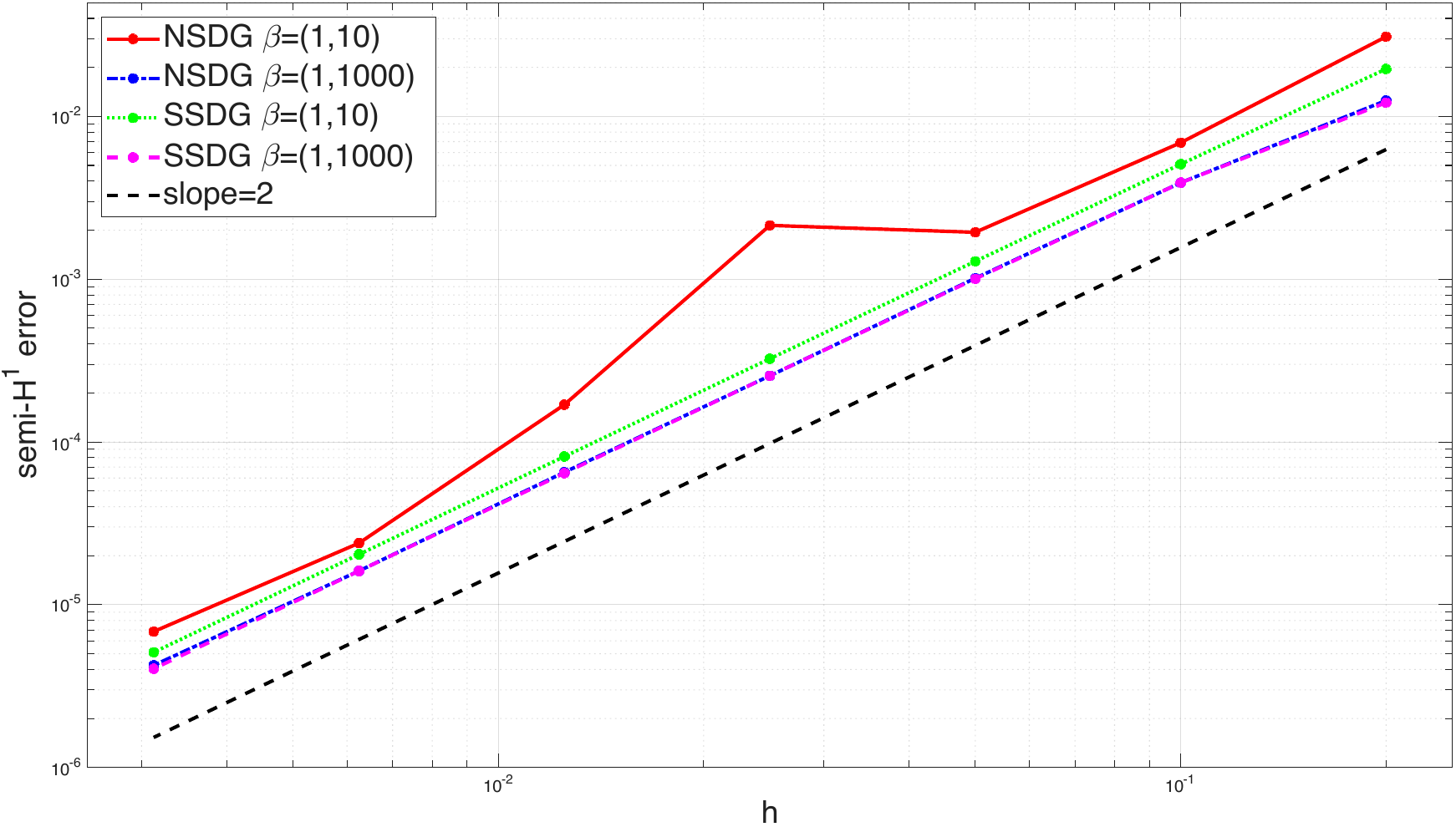}
        \caption{Semi-$H^1$ Errors}
    \end{subfigure}

    \caption{The $L^2$ and semi-$H^1$ errors of the nonsymmetric and symmetric SDG methods with $\beta=(1,10)$ and $\beta=(1,1000)$ for $m=2$}
    \label{fig9}
\end{figure}

\subsection{Star shaped interface}
\label{subsec:star_shape}
To assess the robustness of the SDG method under more challenging geometry, we replace the circular interface with a star-shaped one. 
Let $\Omega=(-2,2)^2$ and the level-set function defined in polar coordinates by
\begin{equation}
\psi(r,\theta)=
r^4\left(1+b \sin(6\theta)\right)^2-r_0,
\end{equation}
which defines a star-shaped interface 
$\Gamma=\{\psi=0\}$ with six lobes of variable curvature. We set
$(b,r_0)=\left(\frac{3}{10},\frac{\pi}{3}\right)$ and 
$\sigma_B^0=4\max\{\beta^-,\beta^+\}$. 
The  exact solution is 
\begin{equation}
\begin{aligned}
u(r,\theta)=
\begin{cases}
\frac{1}{\beta^-} \cos(\psi(r,\theta)) & \text{in}~ \Omega^-,\\
\frac{1}{\beta^+} \cos(\psi(r,\theta))
+  \left(\frac{1}{\beta^-}-\frac{1}{\beta^+}\right)
& \text{in}~ \Omega^+.\\
\end{cases}\label{5.2}
\end{aligned}
\end{equation}

Figure \ref{fig:star_shaped_interface} (a) shows the exact solution from \eqref{5.2}, and Figure \ref{fig:star_shaped_interface} (b) shows the S-SDG solution at $N=160$ with $\beta =(1,10)$. These two figures are visually indistinguishable. 
Figure \ref{fig:errors_star_shaped_interface} reports the convergence rates of both S-SDG and N-SDG methods for $m=1,2$. 

\begin{figure}[!htb]
    \centering

    \begin{subfigure}{0.48\textwidth}
        \centering
        \includegraphics[width=\textwidth]{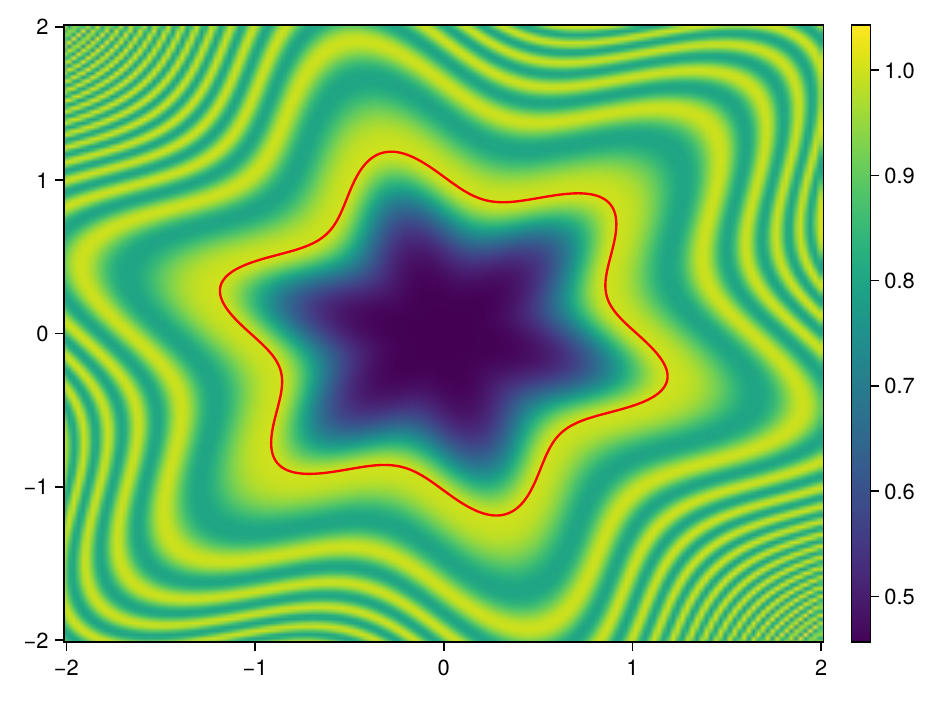}
        \caption{Exact Solution }
    \end{subfigure}
    ~~
    \begin{subfigure}{0.48\textwidth}
        \centering
        \includegraphics[width=\textwidth]{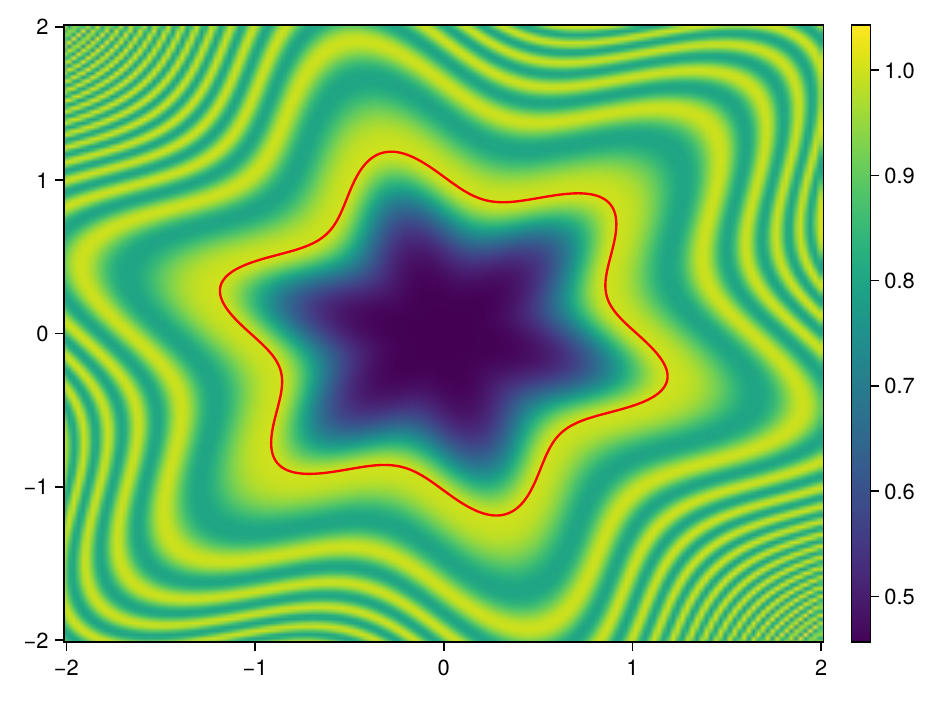}
        \caption{Numerical Solution}
    \end{subfigure}

    \caption{The exact solution and the numerical solution obtained using the SSDG method. The interface is shown in red.}
    \label{fig:star_shaped_interface}
\end{figure}

\begin{figure}[!htb]
    \centering

    \begin{subfigure}{0.48\textwidth}
        \centering
        \includegraphics[width=\textwidth]{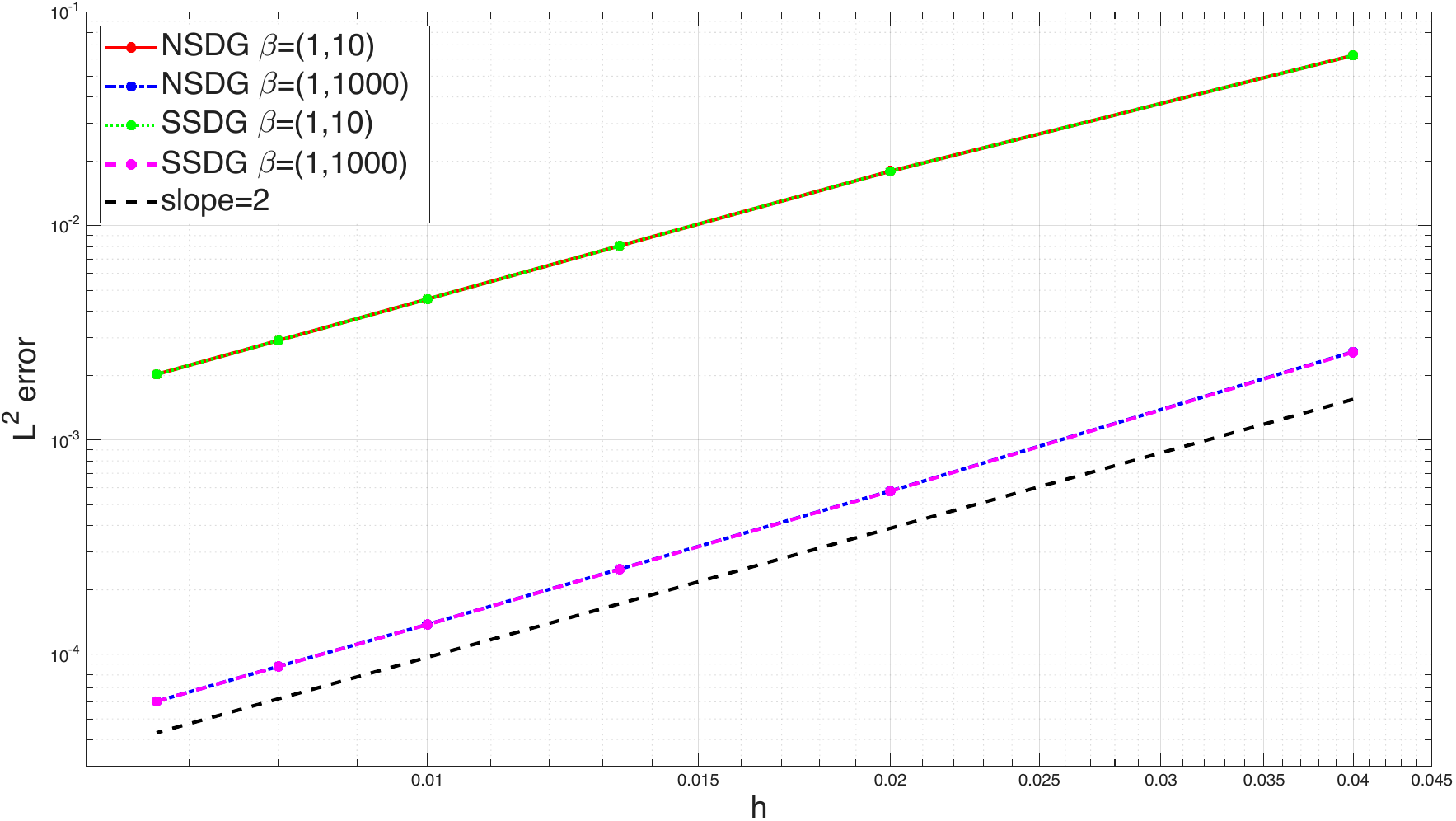}
        \caption{$m=1$ }
    \end{subfigure}
    ~~
    \begin{subfigure}{0.48\textwidth}
        \centering
        \includegraphics[width=\textwidth]{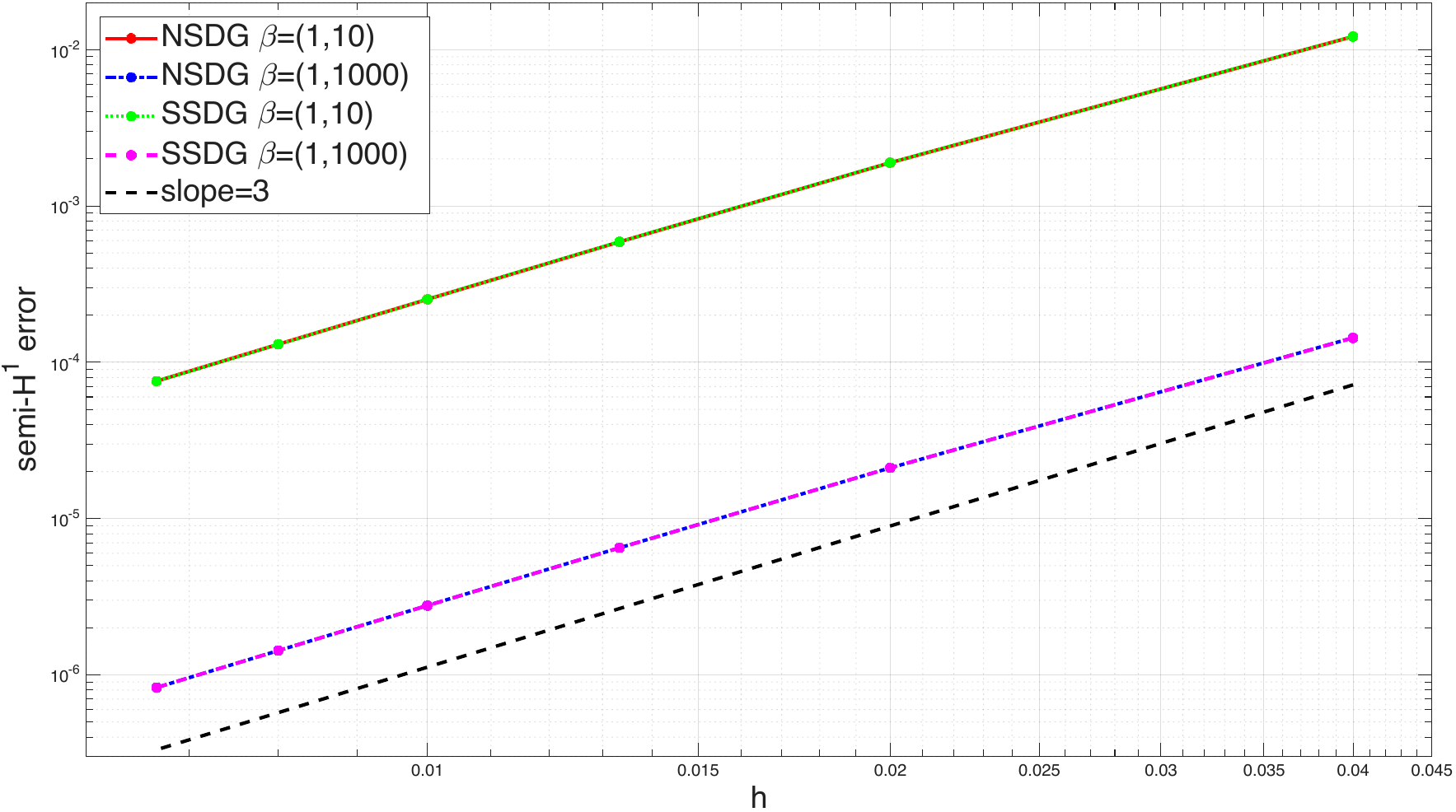}
        \caption{$m=2$}
    \end{subfigure}

    \caption{$L^2$ and semi-$H^1$ errors of the S-SDG and N-SDG methods for $\beta=(1,10)$ and $\beta=(1,1000)$, $m=1,2$.}
    \label{fig:errors_star_shaped_interface}
\end{figure}

\subsection{Eigenfunction solution on a circular interface}
\label{subsec:eigen_mode_shape}

In the previous two examples the solution depends only on the level-set function $\psi$. Consequently, on each interface element $K$, the transformed solution $\hat{u}(\eta,\xi)=(u\circ P_\Gamma)(\eta,\xi)$ depends only on the normal coordinate $\eta$ and not on the tangential coordinate $\xi$. We now consider a solution for which $\hat u$ depends on both $\eta$ and $\xi$ to demonstrate the ability of the SDG method to handle
more general solutions.

Let $\Omega=(-1,1)^2$ with the circular interface
$\Gamma$ defined in the first example.
The exact solution is given in polar coordinates by
\begin{equation}
u(r,\theta)=
\begin{cases}
J_1\left(\gamma^- r\right)\sin(\theta), & r\le r_0,\\
A J_1(\gamma^+ r) \sin(\theta)+B H_1(\gamma^+ r)\sin(\theta), & r>r_0.
\end{cases}
\label{eqn:bessel_solution}
\end{equation}
Here,  $\gamma^{\pm}=(\beta^{\pm})^{-\frac{1}{2}}$, $J_1$ is the Bessel function of the first kind of order one, and $H_1$ is the Hankel function of the second kind of order one.  
The coefficients $A$ and $B$ are determined by the interface jump
conditions through the linear system
\[
\begin{cases}
   AJ_1(\gamma^+r_0)+BH_1(\gamma^+r_0)
   =J_1(\gamma^-r_0),\\ 
   \gamma^+\left(AJ_1'(\gamma^+r_0)+BH_1'(\gamma^+r_0)\right)
   =\gamma^-J_1'(\gamma^-r_0).
\end{cases}
\]

A direct calculation shows that $f=u$ since $u$ is an
eigenfunction of $-\beta \Delta$ on each subdomain.
We solve the resulting interface problem using the S-SDG and N-SDG methods
with $\sigma_B^0=\max\{\beta^-,\beta^+\}$ on a sequence of uniformly refined meshes. 

Figure \ref{fig:errors_eigenmode_shaped_interface} confirms optimal convergence for both methods for $m=1$ and $m=2$. The tangential variation of $\hat u$ activates the cross terms in the Laplacian \eqref{2.2} involving $\hat u_{\xi\xi}$, $\hat u_\xi$, and the curvature-dependent coefficients $J_0, J_1, J_2$. Optimal rates are retained without modification of the scheme, which indicates that the Frenet IFE construction, the hybrid HIFE space built on it, and the SDG methods are robust for solutions with more general structure.

\begin{figure}[!htb]
    \centering

    \begin{subfigure}{0.48\textwidth}
        \centering
        \includegraphics[width=\textwidth]{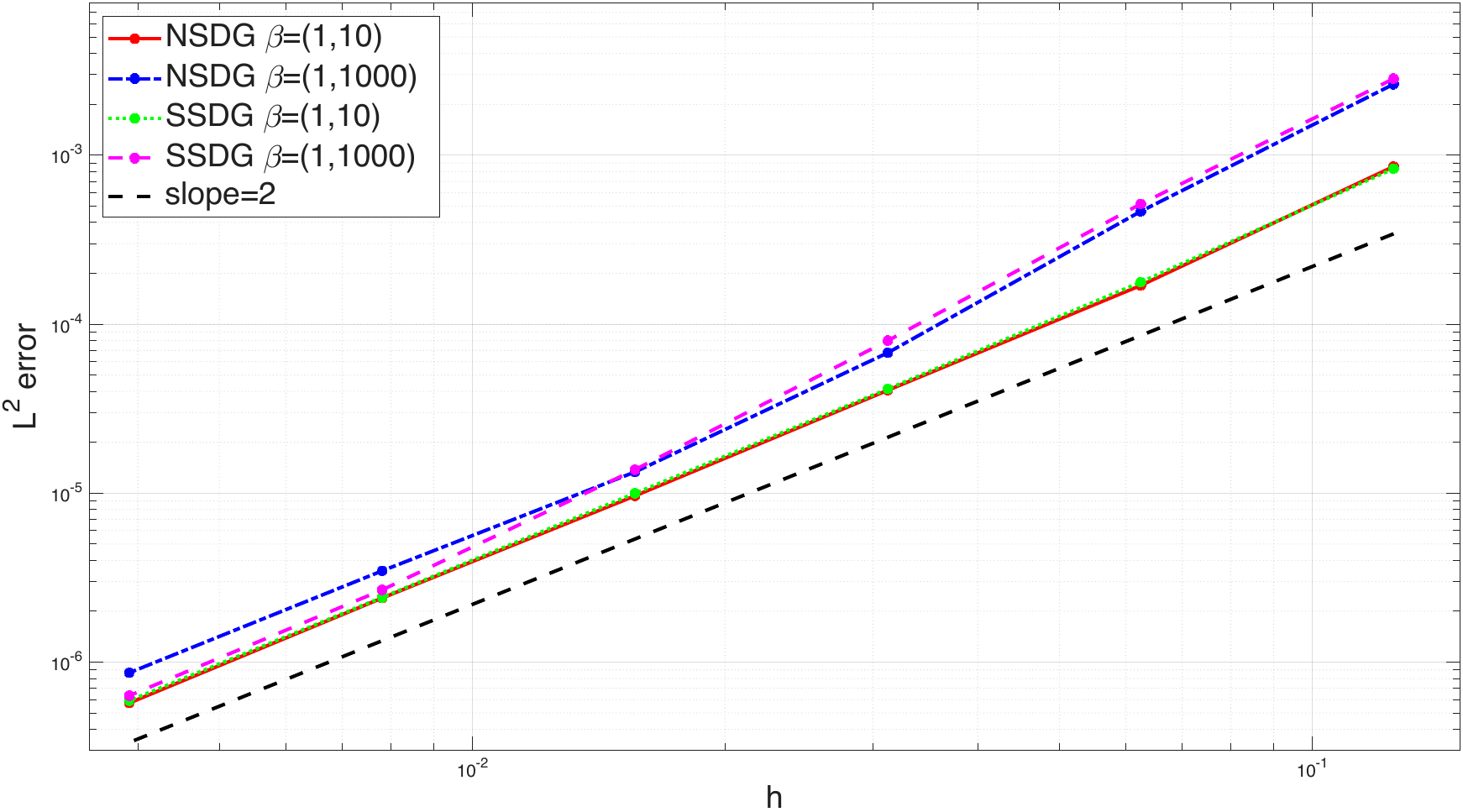}
        \caption{$m=1$ }
    \end{subfigure}
    ~~
    \begin{subfigure}{0.48\textwidth}
        \centering
        \includegraphics[width=\textwidth]{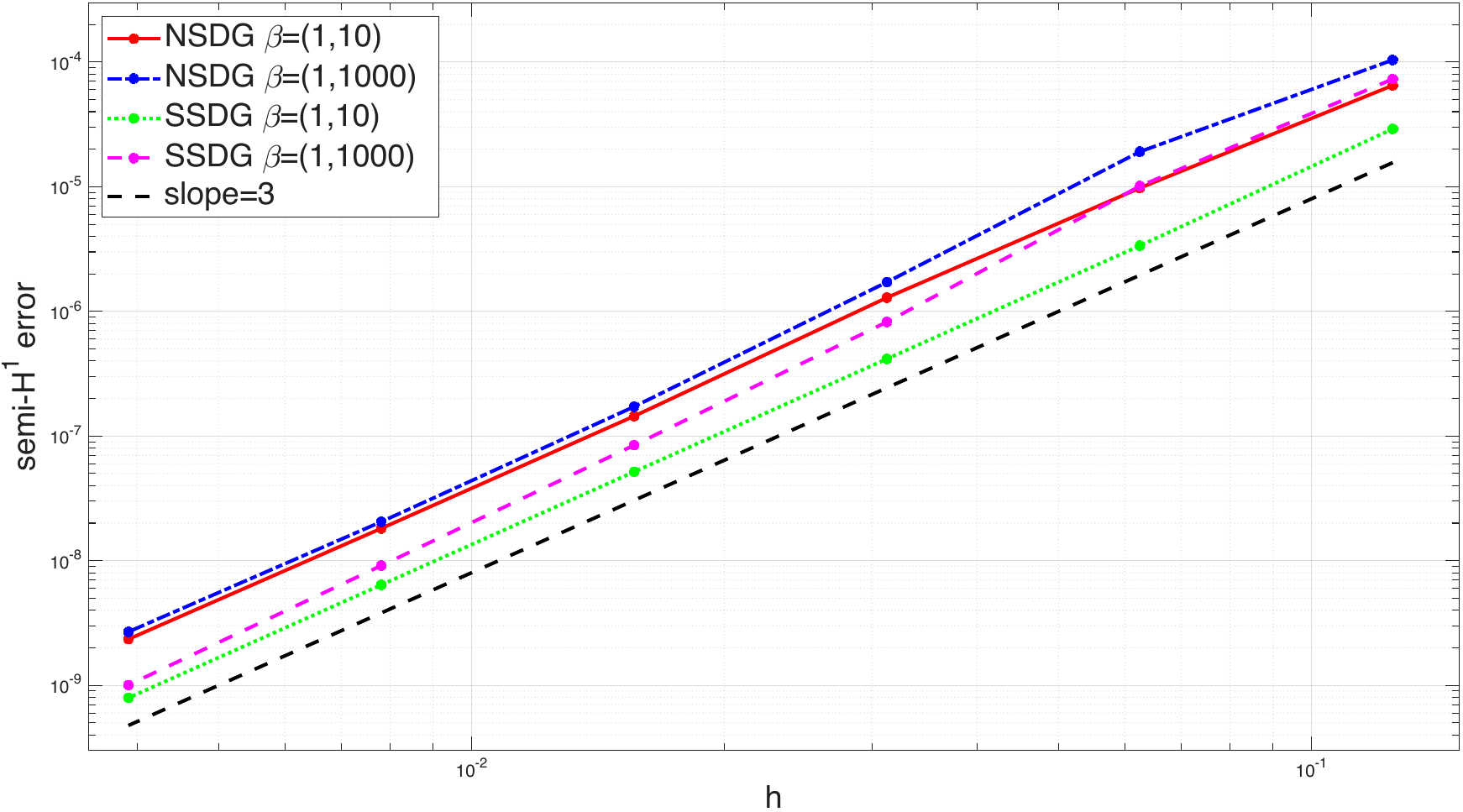}
        \caption{$m=2$}
    \end{subfigure}

    \caption{$L^2$ and semi-$H^1$ errors of the symmetric and nonsymmetric SDG methods with $\beta=(1,10)$ and $\beta=(1,1000)$ for $m=1,2$}
    \label{fig:errors_eigenmode_shaped_interface}
\end{figure}
\section{Conclusion}\label{sec6}
In this work, we present a high-order selective discontinuous Galerkin method that applies the DG formulation only on interface elements. Compared with the standard discontinuous Galerkin method, it significantly reduces the computational cost. We construct a new high-order hybrid IFE space that is locally $H^1$-conforming and has optimal approximation properties. We also establish rigorous error estimates in both the energy norm and the $L^2$ norm. Finally, the numerical results are consistent with the theoretical analysis.

\section*{Acknowledgements}
Xu Zhang is partially supported by National Science Foundation grant DMS-2110833.

\bibliography{FangLiuBibDesk}

\end{document}